\newcommand{\prho}{\overline{\rho}} 
\newtheorem{prop}{Proposition}[section]
\newtheorem{cor}[prop]{Corollary} 
\newtheorem{corollary}[prop]{Corollary} 
\newtheorem{teo}[prop]{Theorem}
\newtheorem{thm}[prop]{Theorem}
\newtheorem{lemma}[prop]{Lemma}
\theoremstyle{definition}
\newtheorem{defi}[prop]{Definition}
\newtheorem{exmp}[prop]{Example}
\newtheorem{exmps}[prop]{Examples}
\newtheorem{example}[prop]{Example}
\theoremstyle{remark}
\newtheorem{remark}[prop]{Remark}
\newcommand{\cosmash}{>\!\!\blacktriangleleft}
\newcommand{\um }{1_A}
\newcommand{\benu}{\begin{enumerate}}
\newcommand{\enu}{\end{enumerate}}
\newcommand{\beqna}{\begin{eqnarray}}
\newcommand{\eqna}{\end{eqnarray}}
\newcommand{\beqnast}{\begin{eqnarray*}}
\newcommand{\eqnast}{\end{eqnarray*}}
\newcommand{\beqn}{\begin{equation}}
\newcommand{\eqn}{\end{equation}}
\newcommand{\beqnst}{\begin{equation*}}
\newcommand{\eqnst}{\end{equation*}}
\newcommand{\TDelta}{\widetilde{\Delta}}
\newcommand{\Tepsilon}{\widetilde{\epsilon}}
\newcommand{\TS}{\widetilde{S}}
\newcommand{\HDelta}{\widehat{\Delta}}
\newcommand{\Hepsilon}{\widehat{\epsilon}}
\newcommand{\ud}{\underline{\Delta}}
\newcommand{\ue}{\underline{\epsilon}}
\newcommand{\te}{\tilde{\varepsilon}}
\newcommand{\td}{\tilde{\Delta}}
\newcommand{\bema}{\left ( \begin{array}}
\newcommand{\ema}{\end{array} \right )}
\newcommand{\Hom}{\operatorname{Hom}}
\newcommand{\End}{\operatorname{End}}
\newcommand{\llangle}{\langle \! \langle}
\newcommand{\rrangle}{\rangle \! \rangle}
\newcommand{\lllangle}{\langle \! \! \langle \! \! \langle}
\newcommand{\rrrangle}{\rangle \! \! \rangle \! \! \rangle}
\newcommand{\ot}{\otimes}
\def\Oo{{\mathcal O}}
\def\ol{\overline}
\def\Fun{{\sf Fun}}
\def\Alg{{\sf Alg}}
\def\ParAct{{\sf ParAct}}
\def\ParCoAct{{\sf ParCoAct}}
\def\ul{\underline}
\newcommand{\thlabel}[1]{\label{th:#1}}
\newcommand{\thref}[1]{Theorem~\ref{th:#1}}
\newcommand{\selabel}[1]{\label{se:#1}}
\newcommand{\seref}[1]{Section~\ref{se:#1}}
\newcommand{\lelabel}[1]{\label{le:#1}}
\newcommand{\leref}[1]{Lemma~\ref{le:#1}}
\newcommand{\prlabel}[1]{\label{pr:#1}}
\newcommand{\prref}[1]{Proposition~\ref{pr:#1}}
\newcommand{\exlabel}[1]{\label{ex:#1}}
\newcommand{\delabel}[1]{\label{de:#1}}
\newcommand{\deref}[1]{Definition~\ref{de:#1}}
\newcommand{\eqlabel}[1]{\label{eq:#1}}
\newcommand{\equref}[1]{(\ref{eq:#1})}
\def\Cc{{\mathcal C}}
\def\Mm{{\mathcal M}}
\def\Hh{{\mathcal H}}
\def\bul{\bullet}
\newcommand\bk[1]{\left<#1\right>}
\title[Dual Constructions for Partial Actions]{Dual Constructions for Partial Actions of Hopf Algebras}
\author[E. Batista]{Eliezer Batista}
\address{Departamento de Matem\'atica, Universidade Federal de Santa Catarina, Brazil}
\email{ebatista@mtm.ufsc.br}
\author[J. Vercruysse]{Joost Vercruysse}
\address{D\'epartement de Math\'ematiques, Universit\'e Libre de Bruxelles, Belgium}
\email{jvercruy@ulb.ac.be}
\thanks{\\ {\bf 2000 Mathematics Subject Classification}: Primary 16W30; Secondary 16S40, 16S35, 58E40.\\   {\bf Key words and phrases:} partial Hopf action, partial action, partial coaction, partial smash product, partial
representation. }
\begin{document}

\begin{abstract}
The duality between partial actions (partial $H$-module algebras) and co-actions (partial $H$-comodule algebras) of a Hopf algebra $H$ is fully explored in this work. A connection between partial (co)actions and Hopf algebroids is established under certain commutativity conditions. Moreover, we continue this duality study, introducing also partial $H$-module coalgebras and their associated $C$-rings, partial $H$-comodule coalgebras and their associated cosmash coproducts, as well as the mutual interrelations between these structures.
\end{abstract} 

\maketitle

\setcounter{tocdepth}{2}
\tableofcontents

\section{Introduction}

The theory of Hopf algebras traditionally has been strongly inspired by group theory. This is due to two dual constructions leading to mayor classes of examples of Hopf algebras and the (co)algebras upon which they (co)act. First of all, given any group $G$ and a commutative base ring $k$, the associated group algebra $H=kG$ is a Hopf $k$-algebra. Actions of the group $G$ are in correspondence with module coalgebras over $H$ and representations of $G$ are exactly modules over $H$. In a dual way, supposing now that $G$ is moreover finite, the algebra of $k$-valued functions on $G$ is a again a Hopf algebra, say $K$. Actions of $G$ now correspond to comodule algebras over $K$ and representations of $G$ are precisely the $K$-comodules. If $G$ is more generally an algebraic group, one could take $K$ to be the algebra of rational functions on $G$ and obtain the same results. The Hopf algebras $H$ and $K$, as well as their respective actions and coactions, are related by dual pairings. 

The theory  of partial actions of groups first appeared in 1994 with the seminal work of Ruy Exel in the attempt to better describe some $C^*$ algebras which are graded by the additive group of integers but nevertheless they cannot be written as ordinary crossed products \cite{Ruy1, Ruy2}. Still in the context of operator algebras, several developments were made, such as the theory of globalization, Takai duality \cite{Aba1}, the characterization of partial crossed products \cite{QuiggRae}, the K-theory of partial crossed products \cite{McCla} and a connection between partial actions and groupoids \cite{Aba2}. In recent years, a purely algebraic theory of partial group actions has been developed along the lines of the theory existing in the context of operator algebras \cite{dok}. The globalization problem has been studied for a long time in the context of operator algebras \cite{Aba1}. In the case of partial actions of groups on algebras the existence of globalization is restricted to partial actions in which the ideals defined by the partial action are unital ideals \cite{dok}. For partial actions of groups on algebras without units, the globalization can be achieved modulo Morita equivalence \cite{AbaRuyMishaSimon}.

The Galois theory for partial group actions \cite{DFP} motivated the introduction of partial actions for Hopf algebras \cite{caen06}, which on its turn triggered a new branch of research in Hopf algebra theory. Partial actions and coactions of Hopf algebras were verified to have nice properties with relation to globalization, that is, every partial action (resp. coaction) of a Hopf algebra $H$ on an algebra $A$ is coming from a restriction of an action (resp. coaction) of $H$ on a bigger algebra $B$ such that $A$ is isomorphic to a unital ideal of $B$ \cite{AB,AB3}. This can be better understood by the fact that partial actions of the group algebra $kG$ are in correspondence with partial actions of the group $G$ such that the ideals defined by the partial action are unital \cite{caen06}. 

As in the global case, the category of partial modules over a Hopf algebra forms a monoidal category, in which the algebra objects are exactly the partial actions \cite{ABCV}. The notion of a partial module over a Hopf algebra $H$ is closely related to that of a partial representation of $H$, and in case where $H$ equals the group algebra $kG$, the latter coincide with the partial representations of the group $G$.

In geometric terms, partial group actions describe no global symmetries of a space, but only ``local'' symmetries of certain subspaces. We make this viewpoint more explicit in \seref{geometry}, where we introduce the notion of a partial action of an algebraic group on an affine space. As it is the case for usual actions, such a partial action allows in a natural way to construct examples of partial coactions on the coordinate algebras of these spaces. 
Moreover, this construction allows us to interpret partial coactions of Hopf algebras as partial symmetries in noncommutative geometry. 

This observation is in fact the starting point and motivation to develop a complete theory of dual constructions for partial actions of Hopf algebras, together with their mutual dualities. Moreover, we show that the internal algebraic structures associated to partial (co)actions are sometimes richer than first expected.

In particular, given a partial action of a Hopf algebra $H$ on an algebra $A$, one can construct the {\em partial smash product} $A\# H$ \cite{caen06}, which is an $A$-ring. In case where $H$ is cocommutative and $A$ is commutative, we show in \seref{commutative} that $A\# H$ has even the structure of a $A$-Hopf algebroid. 

Secondly, given a commutative Hopf algebra $H$ and right partial coaction on a commutative algebra $A$, one can construct a commutative $A$-Hopf algebroid out of these data, called the {\em partial split Hopf algebroid}, denoted by  $A\underline{\otimes} H$. The interconnection between partial coactions of commutative Hopf algebras on commutative algebras and Hopf algebroids also works in the opposite direction. More precisely, given a commutative $A$-Hopf algebroid $\mathcal{H}$ and a Hopf algebroid morphism $F:H\rightarrow \mathcal{H}$ such that $\mathcal{H}$ can be totally defined from the image of $F$ and the image of the source map $s:A\rightarrow \mathcal{H}$, then there is a natural partial coaction of $H$ on the base algebra $A$ constructed from these data. This is a dual version of a theorem which stablishes an interconnection between groupoids and partial group actions, this connection first appeared in the operator algebraic context in \cite{Aba2}, but we follow a purely algebraic formulation due to Kellendonk and Lawson \cite{KL}, which gives an equivalence, in the categorical sense, between partial actions of a group $G$ and star injective functors between groupoids $\mathcal{G}$ and the group $G$.

The partial smash product Hopf algebroid and the partial split Hopf algebroid are in duality. More precisely, consider a co-commutative Hopf algebra $H$, and a commutative Hopf algebra $K$ such that there exists a pairing between them. Let $A$ be a commutative algebra which is a right partial $K$-comodule algebra. Then $A$ is also a left partial $H$-module algebra and there exists a skew pairing, in the sense of Schauenburg \cite{S}, between  $\underline{A\# H}$ and $A\underline{\otimes} H$, considering them as $\times_A$ bialgebras.
All this is done in \seref{comodulealgebras}.
 
In \seref{modulecoalgebras}, we study partial $H$-module coalgebras. For the Hopf algebra being the group algebra $kG$, for some group $G$, the partial $kG$-module coalgebras correspond to partial actions of the group $G$ on coalgebras. We show that a left $H$-module coalgebra over a bialgebra $H$ gives rise to a $C$-ring structure over a subspace $\underline{H\otimes C}\subseteq H\otimes C$.

Partial $H$ comodule coalgebras were first defined in \cite{wang}. In \seref{comodulecoalgebras}, we use this notion to construct partial cosmash coproducts, which are naturally coalgebras over the base field. Given two Hopf algebras $H$ and $K$ such that there is a dual pair between them, there is again a duality between partial $H$-module algebras and partial $K$-comodule algebras. Moreover, if $A$ is a left partial $H$-module algebra and $C$ is a left partial $K$-comodule coalgebra such that there is a dual pairing between them, then there exists a dual pairing between the partial smash product $\underline{A\# H}$ and the partial cosmash coproduct $\underline{C\cosmash K}$.

Finally, in \seref{conclusions}, the global picture of all dualities described in this paper is pointed out and possible directions for further developments of the theory are outlined.

In the appendix, some additional proofs that were omitted in the final journal version are included.

\section{Preliminaries: Partial actions and partial representations}\selabel{preliminaries}

\subsection{Algebraic structures}\selabel{algstruc}

\subsubsection*{Hopf algebroids}

Throughout this note, $k$ denotes a field, however, often a commutative ring is sufficient. An algebra, coalgebra or Hopf algebra means the respective structure with respect to the base $k$, unadorned tensor products are tensor products over $k$.

Let $A$ be a $k$-algebra. An {\em $A$-coring} is a coalgebra object in the category of $A$-bimodules, i.e. it is a triple $(\Cc,\Delta,\epsilon)$ where $\Cc$ is an $A$-bimodule, $\Delta:\Cc\to \Cc\ot_A\Cc$ and $\epsilon:\Cc\to A$ are  $A$-bimodule maps satisfying the usual coassociativity and counit conditions. 

Given a $k$-algebra $A$, a left (resp. right) {\em bialgebroid} over $A$ is given by the data $(\mathcal{H}, A, s_l ,t_l ,\ud_l , \ue_l )$ (resp. $(\mathcal{H}, A, s_r , t_r , \ud_r , \ue_r )$) such that:
\begin{enumerate}
\item $\mathcal{H}$ is a $k$ algebra.
\item The map $s_l$ (resp. $s_r$) is a morphism of algebras from $A$ to $\mathcal{H}$, and the map $t_l$ (resp. $t_r$) is an anti-morphism of algebras from $A$ to $\mathcal{H}$. Their images commute, that is, for every $a,b\in A$ we have 
$s_l (a)t_l (b)=t_l (b)s_l (a)$ (resp. 
$s_r (a)t_r (b)=t_r (b)s_r (a)$).
By the maps $s_l ,t_l $ (resp. $s_r , t_r$) the algebra $\mathcal{H}$ inherits a structure of $A$ bimodule given by $a\triangleright h \triangleleft b =s_l (a)t_l (b)h$ (resp. $a\triangleright h \triangleleft b =hs_r (b)t_r (a)$).
\item The triple $(\mathcal{H},\ud_l , \ue_l )$ (resp. $(\mathcal{H}, \ud_r , \ue_r )$) is an $A$-coring relative to the structure of $A$-bimodule defined by $s_l $ and $t_l$ (resp. $s_r $, and $t_r$).
\item The image of $\ud_l$ (resp. $\ud_r $) lies on the Takeuchi subalgebra
\[
\mathcal{H}\times_A \mathcal{H} =\{ \sum_i h_i \otimes k_i \in \mathcal{H}\otimes_A \mathcal{H} \, |\, \sum_i h_i t_l (a) \otimes k_i =\sum_i h_i \otimes k_i s_l (a) \, \; \forall a\in A \} ,
\]
resp.
\[
\mathcal{H} {_A\times} \mathcal{H} =\{ \sum_i h_i \otimes k_i \in \mathcal{H}\otimes_A \mathcal{H} \, |\, \sum_i
s_r (a) h_i  \otimes k_i =\sum_i h_i \otimes t_r (a) k_i \, \; \forall a\in A \} ,
\]
and it is an algebra morphism.
\item For every $h,k\in \mathcal{H}$, we have
\[
\ue_l (hk)=\ue_l (hs_l (\ue_l (k)))=\ue_l (ht_l (\ue_l (k))) ,
\]
resp.
\[
\ue_r (hk) =\ue_r (s_r (\ue_r (h))k)=\ue_r (t_r (\ue_r (h))k) .
\]
\end{enumerate}

Given two anti-isomorphic algebras $A$ and $\tilde{A}$ (ie, $A\cong \tilde{A}^{op}$) and an algebra $\Hh$ that is endowed with at the same time a left $A$-bialgebroid structure $(\mathcal{H}, A, s_l ,t_l ,\ud_l , \ue_l )$ and a right $\tilde{A}$-bialgebroid structure $(\mathcal{H}, \tilde{A}, s_r, t_r , \ud_r , \ue_r )$, we say that $\Hh$ is a {\em Hopf algebroid}\label{defHopfalgebroid} if it is equipped with an antipode, that is, an anti algebra homomorphism $\mathcal{S}:\mathcal{H}\rightarrow \mathcal{H}$ such that
\begin{enumerate}
\item[(i)] $s_l \circ \ue_l \circ t_r =t_r$, $t_l \circ \ue_l \circ s_r =s_r$, $s_r \circ \ue_r \circ t_l =t_l$ and $t_r\circ \ue_r \circ s_l =s_l$;
\item[(ii)] $(\ud_l \otimes_{\tilde A} I)\circ \ud_r =(I\otimes_A \ud_r )\circ \ud_l$ and
 $\quad (I\otimes_{\tilde A} \ud_l )\circ \ud_r =(\ud_r \otimes_A I )\circ \ud_l$;
\item[(iii)] $\mathcal{S}(t_l (a)ht_r (b'))=s_r( b')\mathcal{S}(h) s_l (a)$,
for all $a\in A$, $b'\in \tilde{A}$ and $h\in \mathcal{H}$;
\item[(iv)] $\mu_{\mathcal{H}} \circ (\mathcal{S} \otimes I)\circ \ud_l =s_r \circ \ue_r$ and $\mu_{\mathcal{H}} \circ (I\otimes \mathcal S)\circ \ud_r =s_l \circ \ue_l$.
\end{enumerate} 
An example of a Hopf algebroid is the so-called {\em split Hopf algebroid}: Let $H$ be a commutative Hopf algebra and $A$ a commutative $H$-comodule algebra. Then $A\ot H$ is an $A$-Hopf algebroid with structure maps $t_r(a)=s_l(a)=s(a)=a\ot 1_H$, $s_r(a)=t_l(a)=a^{[0]}\ot a^{[1]}$, $\Delta_r(a\ot h)=\Delta_l(a\ot h)=a\ot h_{(1)}\ot_A 1\ot h_{(2)}$, $\epsilon_r(a\ot h)=\epsilon_l(a\ot h)=a\epsilon_H(h)$, $\mathcal S(a\ot h)=a^{[0]}\ot a^{[1]} S_H(h)$.

\subsubsection*{Dual pairings}

A {\em dual pairing} between a $k$-algebra $A$ and a $k$-coalgebra $C$ is a $k$-linear map
$$\bk{-,-}:A\ot C\to k$$
satisfying the following conditions
$$\bk{ab,c}=\bk{a,c_{(1)}}\bk{b,c_{(2)}},\qquad \bk{1_A,c}=\epsilon_C(c),$$
for all $a\in A$, $c\in C$.
A dual pairing induces two $k$-linear morphisms 
\begin{eqnarray*}
\phi:A\to C^*,&& \phi(a)(c)=\bk{a,c}\\
\psi:C\to A^*,&& \phi(c)(a)=\bk{a,c}
\end{eqnarray*}
for all $a\in A$, $c\in C$. Clearly, a linear map $\bk{-,-}$ is a dual pairing if and only if the induced map $\phi$ is an algebra morphism. Hence, a dual pairing induces a functor
$$\Phi:\Mm^C\to{_A\Mm}$$
that sends a $C$-comodule $M$ to an $A$-module with the same underlying $k$-module $M$ and $A$-action given by
$$a\cdot m=m^{[0]}\bk{a,m^{[1]}}.$$
The following is equivalent for a pairing $\bk{-,-}$ between $A$ and $C$
\begin{enumerate}[(i)]
\item $\phi$ is injective;
\item the image of $\psi$ is dense in $A^*$ with respect to the finite topology;
\item $\bk{-,-}$ is left non-degenerate, i.e.\ if $\bk{a,c}=0$ for all $c\in C$ then $a=0$.
\end{enumerate}
We call $\bk{-,-}$ {\em non-degenerate} or a {\em rational pairing} if it is both left and right non-degenerate, i.e. if and only if $\phi$ is injective and has a dense image with respect to the finite topology. If the pairing is non-degenerate, then it is known that the functor $\Phi$ can be co-restricted to an isomorphism of categories between the category of $C$-comodules and the category of rational $A$-modules, which are the $A$-modules $M$ such that each cyclic submodule $Am\subset M$ is finitely generated as a $k$-module.

A dual pairing between bialgebras $H$ and $K$ is a dual pairing between the underlying coalgebras and algebras in both ways. One observes that a map $\bk{-,-}:H\ot K\to k$ is a dual pairing of Hopf algebras if and only if the associated morphisms $\phi:H\to K^*$ and $\psi:K\to H^*$ induce bialgebra morphisms $\phi:H\to K^\circ$ and $\psi:K\to H^\circ$, where $B^\circ$ denotes the finite dual of a bialgebra $B$. A dual pairing between Hopf algebras $H$ and $K$ is just a dual pairing between the underlying bialgebras. As any bialgebra morphism between Hopf algebras preserves the antipode, a dual pairing between Hopf algebras respects the antipode in the following way
$$\bk{h,S_K(x)}=\bk{S_H(h),x}, \forall h\in H, x\in K.$$ 
In case of a (non-degenerate) pairing between bialgebras, the functor $\Phi$ is a monoidal functor, hence it sends algebras to algebras in the respective categories.

\subsection{Partial actions}

The following definition, of partial actions of Hopf algebras, first appeared in \cite{caen06} and was motivated by examples of partial actions of groups on algebras. 

\begin{defi} A {\em left partial action} of a Hopf algebra $H$ over a unital algebra $A$ is a linear map 
\[
\begin{array}{rccc} \cdot : & H\otimes A & \rightarrow & A\\
                    \,      & h\otimes a & \mapsto     & h\cdot a
\end{array}
\]
such that
\begin{enumerate}
\item[(PLA1)] For every $a\in A$, $1_h \cdot a =a$.
\item[(PLA2)] For every $h\in H$ and $a,b\in A$, $h\cdot (ab)=
(h_{(1)} \cdot a)(h_{(2)}\cdot b)$.
\item[(PLA3)] For every $h,k\in H$ and $a\in A$, $h\cdot (k\cdot a)=
(h_{(1)}\cdot 1_A)(h_{(2)}k\cdot a)$
\end{enumerate}
The partial action is \underline{\em symmetric} if in addition
\begin{enumerate} 
\item[(PLA3')] For every $h,k\in H$ and $a\in A$, $h\cdot (k\cdot a)=
(h_{(1)}k\cdot a)(h_{(2)}\cdot 1_A)$
\end{enumerate}
In this case, the algebra $A$ is said to be a partial left $H$ module algebra. Given two partial left $H$-module algebras $A$ and $B$ , a morphism of partial actions is an algebra morphism $f:A\rightarrow B$ such that, for every $h\in H$ and $a\in A$ we have $h\cdot_B f(a)=f(h\cdot_A a)$. The category of left partial actions of $H$ will be denoted by ${}_H\ParAct$
\end{defi}

\begin{remark} Throughout this paper, all partial actions will be considered to be symmetric. It is easy to see that $H$-module algebras are partial $H$-module algebras, in fact, one can prove that a partial action is global if and only if for every $h\in H$ we have $h\cdot 1_A =\epsilon (h)1_A$.
\end{remark}

Given a partial action of a Hopf algebra $H$ on a unital algebra $A$, one can define an associative product on $A\otimes H$, given by
\[
(a\otimes h)(b\otimes k)=a(h_{(1)}\cdot b)\otimes h_{(2)}k,
\]
for all $a,b\in A$ and $h,k\in H$.
Then, a new unital algebra is constructed as
\[
\underline{A\# H}=(A\otimes H)(\um \otimes 1_H ) .
\]
This unital algebra is called partial smash product \cite{caen06}. This algebra is generated by typical elements of the form
\[
a\# h= a(h_{(1)}\cdot \um )\otimes h_{(2)} .
\]
One then proves that
\begin{equation}\label{propsmash}
a\# h= a(h_{(1)}\cdot \um )\# h_{(2)} ,
\end{equation}
and that
\begin{equation} \label{propsmash2}
(a\# h)(b\# k)= a(h_{(1)} \cdot b ) \# h_{(2)} k .
\end{equation}

\begin{exmps} \exlabel{partact}
\begin{enumerate}
\item 
\cite{Ruy1,dok} A group $G$ acts partially on a set $X$, if there exists a family of subsets $\{X_g\}_{g\in G}$ and a family of bijections $\alpha_g:X_{g^{-1}}\to X_g$ such that
\begin{enumerate}
\item $X_e =X$, and $\alpha_e =\mbox{Id}_X$;
\item $\alpha_g (X_{g^{-1}} \cap X_h )=X_g \cap X_{gh}$;
\item If $x\in X_{h^{-1}}\cap X_{(gh)^{-1}}$, then $\alpha_g \circ \alpha_h(x) =\alpha_{gh}(x)$.
\end{enumerate}
One says moreover that a group $G$ acts partially on an algebra $A$, if $G$ acts partially on the underlying set of the algebra $A$, such that each $A_g$ is an ideal of $A$ and each $\alpha_g$ is multiplicative. If $G$ acts partially by $\alpha$ on the set $X$, then $G$ acts partially by $\theta$ on the algebra $A=\Fun(X,k)$, where $A_g=\Fun(X_g,k)$ and $\theta_g(f)(x)=f(\alpha_{g^{-1}}(x))$, where $f\in A_{g^{-1}}$ and $x\in X_g$. Remark that in this example, the ideals $A_g=\Fun(X_g,k)$ are unital algebras.
Partial actions of the group algebra $kG$ over any unital algebra $A$ are one-to-one correspondence with partial actions of the group $G$ on $A$ in which the domains $A_g$ are unital ideals, that is, there exists a central idempotent $1_g \in A$ such that $A_g =1_g .A$, and the partially defined isomorphisms $\alpha_g :A_{g^{-1}}\rightarrow A_g$ are unital algebra isomorphisms for each $g\in G$.

\item Recall from \cite{AB} that if $B$ is an $H$-module algebra and $\mathbf{e}$ is a central idempotent in $B$, then there exist a partial action of $H$ on the ideal $A=\mathbf{e}B$, given by
\[
h\cdot a =\mathbf{e} (h\triangleright a),
\]
where $h\in H$, $a\in A$ and $\triangleright:H\ot B\to B$ is the (global) action of $H$ on $B$.
\end{enumerate}
\end{exmps}

Of course, one can introduce in a symmetric way the category $\ParAct_H$ of {\em right} partial actions of $H$, also called right partial $H$-module algebras, and morphisms between them. Furthermore, if $A$ is a right partial $H$-module algebra, then one can construct the right partial smash product $\ul{H\# A}=(1_H\ot 1_A)(H\ot A)$.

\subsection{Partial representations}\selabel{representations}

Closely related to partial actions, there is the concept of a partial representations. Partial representations were first defined for groups. For any partial action of a group $G$ on a unital algebra we construct a partial representation associated to it. On the other hand, if we have a partial representation, one can define a partial action in order to relate this original partial representation with a partial crossed product, see \cite{Ruy1} \cite{QuiggRae} \cite{dok0}. 

\begin{defi} \label{partialrep} \cite{ABCV}
Let $H$ be a Hopf $k$-algebra, and let $B$ be a unital $k$-algebra. A \emph{partial representation} of $H$ on $B$ is a linear map $\pi: H \rightarrow B$ such that 
\begin{enumerate}[{(PR1)}]
\item $\pi (1_H)  =  1_B$; \label{partialrep1}
\item $\pi (h) \pi (k_{(1)}) \pi (S(k_{(2)}))  =   \pi (hk_{(1)}) \pi (S(k_{(2)})) $;
\label{partialrep2}
\item $\pi (h_{(1)}) \pi (S(h_{(2)})) \pi (k)  =   \pi (h_{(1)}) \pi (S(h_{(2)})k)$; \label{partialrep3} 
\item $\pi (h) \pi (S(k_{(1)})) \pi (k_{(2)}) = \pi (hS(k_{(1)})) \pi (k_{(2)})$; \label{partialrep4}
\item $\pi (S(h_{(1)}))\pi (h_{(2)}) \pi (k) = \pi (S(h_{(1)}))\pi (h_{(2)} k)$. \label{partialrep5}
\end{enumerate}
A left (resp.\ right) {\em partial $H$-module} is a pair $(M,\pi )$ in which $M$ is a $k$-vector space and $\pi$ is a partial representation of $H$ on the algebra $\mbox{End}_k (M)$ (resp.\ $\mbox{End}_k (M)^{op}$).  A morphism between two partial $H$-modules $(M,\pi )$ and $(N, \phi )$ is a linear function $f:M\rightarrow N$ such that for each $h\in H$ one has, $f\circ \pi (h) =\phi (h)\circ f$. The category of left (resp.\ right) partial $H$-modules is denoted by ${}_H \mathcal{M}^{par}$ (resp.\ $\mathcal{M}_H^{par}$).
\end{defi}

Let left $(M,\pi)$ be a partial $H$-module For $h\in H$, we will denote $h\bullet \underline{\; }=\pi(h)$ in the algebra $\End_k (M)$.

It was shown in \cite{ABCV} that the category of partial modules is equivalent to a category of modules over a Hopf algebroid $H_{par}$, which can be explicitly constructed out of $H$ by means of a universal property (hence it is a quotient algebra of the free tensor algebra over $H$), in particular, there is a partial representation $[\underline{\;}]:H\to H_{par}$. We denote the action of $H_{par}$ on a partial $H$-module $M$ as 
$x\triangleright m$, for $x\in H_{par}$ and $m\in M$. In particular, for $h\in H$ and $x=[h]\in H_{par}$, we have
$[h]\triangleright m=h\bul m$. 

As a consequence of the Hopf algebroid structure, the category of partial $H$-modules is closed monoidal and allows a monoidal forgetful functor to the category of $A$-bimodules that preserves internal Homs. Here the base algebra $A$ of the Hopf algebroid $H_{par}$ can be identified with the subalgebra of $H_{par}$ generated by elements of the form $\epsilon_h=[h_{(1)}][S(h_{(2)})]$. For details we refer to \cite{ABCV}. Using this monoidal structure, it was shown in \cite{ABCV} that the category of (left) $H$-module algebras ${}_H\ParAct$ coincides with the category $\Alg(_H\Mm^{par})$ of algebra objects in ${}_H\Mm^{par}$, what justifies the name ``partial module algebra''. Let us now state explicitly the internal Hom structure of the category of partial modules.

\begin{prop}\prlabel{internalHoms}
Consider the objects $M,N,P\in {}_H \mathcal{M}^{par}$, then 
\begin{enumerate}[(i)]
\item $\Hom_A (M,N)$ is an object in ${}_H \mathcal{M}^{par}$, where
\[
(x\triangleright F) (m) =x_{(1)} \triangleright_N F(\mathcal{S}(x_{(2)}) \triangleright_M m) 
\]
for all $F\in \mbox{Hom}_A (M,N)$ and $x\in H_{par}$;
\item the $k$-linear map
\begin{eqnarray*}
{}_{H_{par}}\mbox{Hom}(M\otimes_A N ,P)\to {}_{H_{par}}\mbox{Hom}(M,\mbox{Hom}_A (N,P)),\ F\mapsto \widehat F,
\end{eqnarray*}
with $\widehat F(m)(n)=F(m\otimes_A n)$, for all $m\in M$, $n\in N$
is an isomorphism, natural in $M$ and $P$;
\item $\End_A(M)$ is a left partial $H$-module algebra.
\end{enumerate}
\end{prop}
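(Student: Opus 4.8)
The plan is to carry out everything inside the category ${}_{H_{par}}\Mm$ of modules over the Hopf algebroid $H_{par}$, using the equivalence ${}_H\Mm^{par}\cong {}_{H_{par}}\Mm$ recalled above, so that the three statements become the assertion that the module category over a Hopf algebroid with antipode is closed monoidal. The monoidal product is the $A$-balanced tensor $M\otimes_A N$ equipped with the diagonal action $x\triangleright(m\otimes_A n)=(x_{(1)}\triangleright_M m)\otimes_A(x_{(2)}\triangleright_N n)$, which is well defined precisely because $\Delta(x)=x_{(1)}\otimes_A x_{(2)}$ lands in the Takeuchi product $H_{par}\times_A H_{par}$; this part is already built into the bialgebroid structure of $H_{par}$. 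The genuinely new content to establish is the closed structure carried by $\Hom_A(M,N)$ together with the adjunction in (ii).

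For (i), I would first check that the prescription $(x\triangleright F)(m)=x_{(1)}\triangleright_N F(\mathcal{S}(x_{(2)})\triangleright_M m)$ indeed produces an element of $\Hom_A(M,N)$ rather than a mere $k$-linear map. Its independence of the representative of $\Delta(x)$ modulo the $A$-balancing is exactly where the Takeuchi condition is used, and the left $A$-linearity of $x\triangleright F$ follows from pushing a source/target map through the $A$-linear $F$ and moving it across $\mathcal{S}$ by means of antipode axiom (iii), $\mathcal{S}(t_l(a)ht_r(b'))=s_r(b')\mathcal{S}(h)s_l(a)$, after which it recombines with $x_{(1)}$ through the counit conditions on $\Delta$. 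Next I would verify the module axioms: the unit $1_{H_{par}}$ acts as the identity since $\Delta(1)=1\otimes_A 1$ and $\mathcal{S}(1)=1$, while associativity $(xy)\triangleright F=x\triangleright(y\triangleright F)$ reduces, after applying coassociativity, to the anti-multiplicativity of $\mathcal{S}$, so that the two inner antipode terms reassemble into $\mathcal{S}((xy)_{(2)})$. Rewriting these identities back through the equivalence shows that $h\mapsto(F\mapsto h\triangleright F)$ satisfies (PR1)--(PR5), but this is automatic once the $H_{par}$-action has been established.

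For (ii), the candidate inverse is $G\mapsto\check{G}$ with $\check{G}(m\otimes_A n)=G(m)(n)$, so bijectivity is formal once both maps are shown to be well defined. The essential step is that $\widehat{F}$ is $H_{par}$-linear whenever $F$ is: unwinding $\widehat{F}(x\triangleright m)(n)$ and $(x\triangleright\widehat{F}(m))(n)$ by means of the internal-Hom formula of (i) and the diagonal action on $M\otimes_A N$, the desired equality collapses to the antipode convolution identities (iv), $\mu_{\mathcal{H}}\circ(\mathcal{S}\otimes I)\circ\ud_l=s_r\circ\ue_r$ and its mirror, which annihilate the spurious $\mathcal{S}(x_{(2)})x_{(3)}$ factor. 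Naturality in $M$ and $P$ is then a direct diagram chase.

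The hard part will be (i): the bookkeeping of source and target maps across the antipode in the $A$-relative (rather than merely $k$-linear) setting, since the Takeuchi product and the antipode axioms (iii)--(iv) must be invoked in exactly the right places to guarantee simultaneously the well-definedness over $\otimes_A$ and the $A$-linearity of the output. Once (i) and (ii) are in hand, (iii) is immediate: in any closed monoidal category the internal endomorphism object $\End_A(M)=\Hom_A(M,M)$ is canonically a monoid, with multiplication given by internal composition and unit by the name of $\mathrm{id}_M$, both of which are $H_{par}$-module maps by the adjunction of (ii). Since $\Alg({}_H\Mm^{par})={}_H\ParAct$ as recalled above, this monoid is precisely a left partial $H$-module algebra.
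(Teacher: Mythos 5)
You have chosen the same route the paper intends: the paper gives no proof of \prref{internalHoms} at all, but presents it as a consequence, worked out in \cite{ABCV}, of the fact that ${}_H\Mm^{par}\cong{}_{H_{par}}\Mm$ is closed monoidal because $H_{par}$ is a Hopf algebroid; and your treatment of (iii) --- the internal endomorphism object of a closed monoidal category is an algebra object, and $\Alg({}_H\Mm^{par})={}_H\ParAct$ --- is exactly the intended argument.

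There is, however, a genuine gap in the way you propose to close (i) and (ii), and it sits precisely at the point you flag as ``the hard part''. In the Hopf algebroid formalism of \seref{algstruc} there are \emph{two} comultiplications, and axiom (iv) only controls the combinations $\mu_{\Hh}\circ(\mathcal S\ot I)\circ\ud_l=s_r\circ\ue_r$ and $\mu_{\Hh}\circ(I\ot\mathcal S)\circ\ud_r=s_l\circ\ue_l$. The monoidal structure of ${}_{H_{par}}\Mm$, hence the $H_{par}$-linearity of $F$ that you invoke, is governed by $\ud_l$; consequently, unwinding $(x\triangleright\widehat F(m))(n)$ produces the factor $x_{(2)}\mathcal S(x_{(3)})$ with \emph{all} Sweedler legs taken from $\ud_l$, i.e.\ the combination $\mu_{\Hh}\circ(I\ot\mathcal S)\circ\ud_l$, which neither identity of axiom (iv) covers (your written factor $\mathcal S(x_{(2)})x_{(3)}$ is the one occurring for the inverse map $G\mapsto\check G$, and even there (iv) only produces $s_r(\ue_r(-))$, which still has to be moved across the $A$-linear map using axiom (i)). The repair is to first invoke the mixed coassociativity axiom (ii), $(\ud_l\ot I)\circ\ud_r=(I\ot\ud_r)\circ\ud_l$, so that the antipode sits against a $\ud_r$-leg, and then push the resulting $s_l(\ue_l(-))$ through the $\ot_A$-balancing and absorb it with the left counit identity $t_l(\ue_l(x_{(2)}))x_{(1)}=x$. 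The same phenomenon affects your well-definedness argument in (i): the $A$-balancing of $\ud(x)$ produces $\mathcal S(s_l(a))$, which axiom (iii) does not compute (it only gives $\mathcal S\circ t_l=s_l$ and $\mathcal S\circ t_r=s_r$); one must first derive $\mathcal S(s_l(a))=s_r(\ue_r(s_l(a)))$ from axioms (i) and (iv) before the term can be traded for a $t_l$-action and moved across $F$. None of this is irreparable --- and for $H_{par}$ it is eased by the fact that both comultiplications are induced by one and the same formula on representatives, which is what \cite{ABCV} exploits --- but as written your appeals to ``the Takeuchi condition'' and to ``axiom (iv) and its mirror'' do not suffice; these left/right bookkeeping steps are the actual substance of the proof.
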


We have also another duality theorem involving the dual vector space of a partial $H$-module. This reveals that the algebra $H_{par}$ should posses a richer structure than just that of an $A$-Hopf algebroid, which is a subject of future investigation.

\begin{prop} \label{dualandhoms} Given an object $M\in {}_H \mathcal{M}^{par}$, its dual vector space $M^* =\mbox{Hom}_k (M,k)$ has both left and right partial $H$ module-structures.
\end{prop}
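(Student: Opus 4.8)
The plan is to realize both structures on $M^*$ as composites of the given partial representation $\pi:H\to\End_k(M)$ (recall $h\bullet m=\pi(h)(m)$) with the transpose map. Write $\tau:\End_k(M)\to\End_k(M^*)$ for the linear map $\tau(T)(f)=f\circ T$; a one-line computation shows it is an algebra anti-homomorphism, hence simultaneously an algebra homomorphism $\End_k(M)\to\End_k(M^*)^{op}$ and an algebra homomorphism $\End_k(M)^{op}\to\End_k(M^*)$. The first fact I would record is purely formal: the defining relations (PR1)--(PR5) are polynomial identities in products of $\pi$-values, so for any algebra homomorphism $g$ the composite $g\circ\pi$ is again a partial representation.

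The right structure then comes for free. Since $\tau:\End_k(M)\to\End_k(M^*)^{op}$ is an algebra homomorphism, $\tau\circ\pi:H\to\End_k(M^*)^{op}$ is a partial representation, which is exactly a right partial $H$-module structure on $M^*$; explicitly $(f\cdot h)(m)=f(h\bullet m)$, with no antipode involved.

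For the left structure I would set $h\bullet f:=f\circ\pi(S(h))$, that is $\pi^{*}:=\tau\circ\pi\circ S:H\to\End_k(M^*)$, and prove it is a partial representation. The key intermediate claim is that $\pi$, regarded now as a map $H^{op,cop}\to\End_k(M)^{op}$, is still a partial representation. This is checked by inspecting how reversing both the product of $\End_k(M)$ and the coproduct of $H$ transforms the five relations: a direct computation shows that (PR2) for $\pi:H^{op,cop}\to\End_k(M)^{op}$ is precisely (PR5) for $\pi:H\to\End_k(M)$, that (PR3) becomes (PR4), and symmetrically (PR4)$\to$(PR3) and (PR5)$\to$(PR2), while (PR1) is preserved; hence the whole system survives. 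Granting this, I would use that $S:H\to H^{op,cop}$ is a homomorphism of Hopf algebras, together with the fact that precomposition with a Hopf algebra homomorphism preserves partial representations (again because the axioms only invoke multiplication, comultiplication and antipode, all respected by such a map). Composing, $\pi\circ S:H\to\End_k(M)^{op}$ is a partial representation, and applying the algebra homomorphism $\tau:\End_k(M)^{op}\to\End_k(M^*)$ yields that $\pi^{*}=\tau\circ\pi\circ S$ is a partial representation, i.e.\ a left partial $H$-module structure on $M^*$ with $(h\bullet f)(m)=f(S(h)\bullet m)$.

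The routine parts are the two stability lemmas (composition with algebra, resp.\ Hopf, homomorphisms), which are immediate once one notices the axioms only involve the algebra structure of the target and the Hopf structure of the source. The main obstacle is the bookkeeping in the intermediate claim: verifying that reversing both the multiplication of $\End_k(M)$ and the comultiplication of $H$ permutes (PR1)--(PR5) among themselves rather than producing genuinely new relations. I would carry this out by rewriting each relation in $\End_k(M)^{op}$ back inside $\End_k(M)$ (so that a triple product $a\cdot_{op}b\cdot_{op}c$ becomes $cba$) and relabelling the Sweedler legs coming from the opposite coproduct, after which each transformed identity matches one of the original five on the nose.
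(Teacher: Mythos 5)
Your proposal is correct, and it produces exactly the same two structures as the paper: the right action $\phi\bullet_r h=\phi\circ\pi(h)$ and the left action $h\bullet_l\phi=\phi\circ\pi(S(h))$. The difference lies entirely in the verification: the paper writes these formulas (phrased through the $H_{par}$-action, $[h]\triangleright\phi$ and $\phi\triangleleft[h]$) and then simply asserts that checking the axioms is easy, whereas you actually carry out that check by a structural argument. Your three transport lemmas are all sound: post-composition of a partial representation with a unital algebra homomorphism preserves (PR1)--(PR5) since these are identities among products of $\pi$-values; pre-composition with a Hopf algebra homomorphism preserves them since they only involve multiplication, comultiplication, unit and antipode, all of which such a map intertwines; and your key claim — that $\pi$ viewed as a map $H^{op,cop}\to\End_k(M)^{op}$ is again a partial representation — is exactly right, with the permutation of axioms being precisely (PR2)$\leftrightarrow$(PR5), (PR3)$\leftrightarrow$(PR4), (PR1) fixed, as one sees by rewriting opposite triple products $a\cdot_{op}b\cdot_{op}c$ as $cba$ and relabelling the Sweedler legs. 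Combined with the facts that $S:H\to H^{op,cop}$ is a Hopf algebra map (the antipode of $H^{op,cop}$ being $S$ itself) and that the transpose $\tau$ is an algebra anti-homomorphism, this gives a complete and self-contained proof of the statement the paper leaves implicit; it also makes transparent why the antipode twist is unavoidable for the left structure but absent from the right one.
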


\begin{proof} Given $\phi \in M^*$ and $h\in H$, we define the functionals $h\bullet_l \phi $ and $\phi \bullet_r h$ by
\[
(h\bullet_l \phi) (m)=([h]\triangleright \phi ) (m) =\phi ([S(h)]\triangleright_M m) , \qquad (\phi \bullet_r h)(m)=(\phi \triangleleft [h])(m) =\phi ([h]\triangleright_M m) .
\]
It is easy to see that these are, respectively, a left and a right partial $H$-module structures on $M^*$.
\end{proof}

\section{Partial actions of co-commutative Hopf algebras.}\selabel{commutative}

In \cite{ABCV} it was shown that the universal partial Hopf algebra $H_{par}$ (see  \seref{representations}), which has a structure of a Hopf algebroid, is moreover isomorphic to a partial smash product $\ul{A\# H}$, where $A$ is a particular subalgebra of $H_{par}$. In this section, we will show that a general partial smash product $\underline{A\# H}$ of a {\em co-commutative} Hopf algebra with a commutative left partial $H$ module algebra $A$ also has the structure of a Hopf algebroid, however, in this commutative case several axioms from the general definition of a Hopf algebroid simplify drastically.

First, we define the left and right source and target maps all to be the same map
\[
\begin{array}{rccl} s_l = t_l = s_r = t_r :& A & \rightarrow & \underline{A\# H}\\
\, & a & \mapsto & a\# 1_H \end{array}
\]
which might be denoted as $s$ or $t$ below. Clearly, $s$ is an algebra morphism and as $A$ is commutative, then the target map $t=s$ can be viewed as an anti morphism as well. Again by the commutativity of $A$, the images of $s$ and $t$ obviously commute in $\mathcal{H}=\underline{A\# H}$. Nevertheless, the images of $s$ and $t$ lie not necessarily in the center of $A\# H$, hence the pairs $(s_l,t_l)$ and $(s_r,t_r)$ induce different $A$-(bi)module structures on $A\# H$. Explicitly, the ``left handed'' $A$-bimodule structure in $\mathcal{H}$ is given by
\begin{equation}\label{bimoduleleft}
a\triangleright (b\# h)\triangleleft c= s_l(a)t_l (c)(b\# h)=abc\# h,
\end{equation}
and the ``right handed'' $A$ bimodule structure by
\begin{equation}\label{bimoduleright}
a\blacktriangleright (b\# h)\blacktriangleleft c= (b\# h) s_r (c) t_r (a)=b(h_{(1)}\cdot (ac))\# h_{(2)} .
\end{equation}
By the commutativity of $A$, both bimodule structures are in fact central, i.e. they are just $A$-module structures or
$$a\triangleright (b\# h)=(b\# h)\triangleleft a, \qquad a\blacktriangleright (b\# h)=(b\# h)\blacktriangleleft a.$$
Hence, different from what is done in a general Hopf algebroid, it is in our situation not needed to keep in mind the two bimodule structures on $\Hh$, but just the two module structures. In particular, we will show in lemma's \ref{le:coringleft} and \ref{le:coringright} below that if $\Hh$ is endowed with one of the $A$-module structures described above, it can moreover be endowed with a coalgebra structure. Of course these coalgebra structures can be viewed as coring structures to fit the definition of a Hopf algebroid.

In what follows, we will denote the $A$ tensor product with respect to the (bi)module structure \eqref{bimoduleleft} as $\ot_A^l$ and the tensorproduct with respect to \eqref{bimoduleright} as $\ot_A^r$.

\begin{lemma}\lelabel{coringleft} Let $H$ be a cocommutative Hopf algebra and $A$ be a commutative left partial $H$ module algebra. Then the partial smash product 
$\mathcal{H}= \underline{A\# H}$ has the structure of an $A$-coalgebra with the $A$-(bi)module structure \eqref{bimoduleleft} and comultiplication and counit given by
\begin{eqnarray*}
\underline{\Delta}_l (a\# h)&=&(a\# h_{(1)}) \otimes^l_A (\um \# h_{(2)}), \\
\underline{\epsilon}_l (a\# h) &=&a(h\cdot \um ).
\end{eqnarray*}
\end{lemma}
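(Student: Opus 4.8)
The plan is to verify the three defining properties of an $A$-coalgebra relative to the bimodule structure \eqref{bimoduleleft}, namely that $\underline{\Delta}_l$ and $\underline{\epsilon}_l$ are well-defined $A$-bimodule maps, that coassociativity holds, and that the counit axioms hold. Because the bimodule structure \eqref{bimoduleleft} is central (both sides act by left multiplication by $s(a)=a\# 1_H$), the $A$-bilinearity reduces to checking that $\underline{\Delta}_l$ and $\underline{\epsilon}_l$ commute with multiplication by $a\# 1_H$ on the appropriate slots, and that they are well-defined on $\mathcal{H}=(A\ot H)(\um\ot 1_H)$ rather than on the full $A\ot H$. The first thing I would do is confirm well-definedness: since a typical element satisfies $a\#h=a(h_{(1)}\cdot\um)\#h_{(2)}$ by \eqref{propsmash}, I must check that both formulas respect this identity, i.e.\ that applying $\underline{\Delta}_l$ (resp.\ $\underline{\epsilon}_l$) to $a\#h$ and to $a(h_{(1)}\cdot\um)\#h_{(2)}$ gives the same answer. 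This is where cocommutativity of $H$ and commutativity of $A$ will first be used.

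Next I would check the counit axioms, $(\underline{\epsilon}_l\ot^l_A \mathrm{id})\circ\underline{\Delta}_l=\mathrm{id}=(\mathrm{id}\ot^l_A\underline{\epsilon}_l)\circ\underline{\Delta}_l$ after the canonical identifications $A\ot^l_A\mathcal{H}\cong\mathcal{H}\cong\mathcal{H}\ot^l_A A$. Concretely, applying $\underline{\epsilon}_l$ to the left tensorand of $\underline{\Delta}_l(a\#h)=(a\#h_{(1)})\ot^l_A(\um\#h_{(2)})$ yields $a(h_{(1)}\cdot\um)$ acting (via the $A$-module structure \eqref{bimoduleleft}) on $\um\#h_{(2)}$, which should collapse to $a\#h$ using \eqref{propsmash} and counitality of $\Delta$; the right-hand counit is computed symmetrically using $\underline{\epsilon}_l(\um\#h_{(2)})=(h_{(2)}\cdot\um)$ together with (PLA2) and the fact that $h_{(1)}\cdot\um$ times $h_{(2)}\cdot\um$ simplifies. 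For coassociativity I would expand $(\underline{\Delta}_l\ot^l_A\mathrm{id})\underline{\Delta}_l(a\#h)$ and $(\mathrm{id}\ot^l_A\underline{\Delta}_l)\underline{\Delta}_l(a\#h)$, both of which should produce $(a\#h_{(1)})\ot^l_A(\um\#h_{(2)})\ot^l_A(\um\#h_{(3)})$; here coassociativity of $\Delta_H$ does essentially all the work, provided the $\ot^l_A$ balancing conditions are met.

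The main obstacle I expect is verifying that $\underline{\Delta}_l$ genuinely lands in the $A$-module tensor product $\mathcal{H}\ot^l_A\mathcal{H}$ in a well-defined way and is compatible with the smash-product relation \eqref{propsmash}. The subtlety is that the element $\um$ appearing in the second leg $(\um\#h_{(2)})$ must correctly absorb the idempotent factors $h\cdot\um$ that are implicit in the smash product, and one must confirm that the expression is independent of how $a\#h$ is written. I anticipate that the key computation is showing $(a\#h_{(1)})\ot^l_A(\um\#h_{(2)})$ is unchanged when $a\#h$ is replaced by $a(h_{(1)}\cdot\um)\#h_{(2)}$: expanding the replacement and pushing the factor $h\cdot\um$ across the balanced tensor $\ot^l_A$ (which allows moving elements of $A$, i.e.\ factors of the form $b\#1_H$, from one side to the other) should, after using (PLA3') or (PLA2), cocommutativity, and commutativity of $A$, reabsorb the idempotent. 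Once this well-definedness is secured, the counit and coassociativity identities follow from routine manipulations with the Hopf algebra axioms of $H$, so I would allocate the bulk of the argument to this compatibility check.
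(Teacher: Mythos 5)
Your proposal is correct and takes essentially the same route as the paper: a direct verification of the $A$-(bi)linearity, coassociativity, and the two counit axioms, with \eqref{propsmash} and the cocommutativity of $H$ doing the work in the counit computations. The only difference is one of emphasis—the paper treats well-definedness and $A$-linearity as immediate and writes out only the counit axioms, whereas you foreground the compatibility with \eqref{propsmash}; your check there is sound, and in fact it needs only \eqref{propsmash} and coassociativity of $\Delta_H$ for $\underline{\Delta}_l$ and (PLA2) for $\underline{\epsilon}_l$, while cocommutativity is genuinely needed not there but in the right counit axiom, where $h_{(1)}$ and $h_{(2)}$ must be swapped before \eqref{propsmash} can absorb the factor $h\cdot \um$.
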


\begin{proof} 
By the definitions of $\underline{\Delta}$ and of $\underline{\epsilon}$, one can verify immediately that the comultiplication and the counit are $A$-linear maps
and that $\ul\Delta$ is coassociative. 
The right counit axiom reads
\beqnast
(I\otimes \underline{\epsilon}_l )\circ \underline{\Delta}_l (a\# h) & = & (a\# h_{(1)})\triangleleft (h_{(2)}\cdot \um ) 
 =  a(h_{(2)} \cdot \um )\# h_{(1)}\\
& = & a(h_{(1)} \cdot \um )\# h_{(2)}
 =  a\# h,
\eqnast
where we used the co-commutativity of $H$ and \eqref{propsmash}. Similarly, we check the left counit axiom
\beqnast
(\underline{\epsilon}_l \otimes I)\circ \underline{\Delta}_l (a\# h) & = & (a(h_{(1)}\cdot \um ))\triangleright (\um \# h_{(2)}) \\
& = & a(h_{(1)} \cdot \um )\# h_{(2)}.
 =  a\# h
\eqnast
Therefore, $(\underline{A\# H} , \ud ,\ue )$ is an $A$-coalgebra.
\end{proof}

\begin{lemma}\lelabel{coringright} Let $A$, $H$ be as in \leref{coringleft}. 
Then the partial smash product $\Hh=\underline{A\# H}$ has the structure of an $A$-coalgebra with $A$-(bi)module structure given by \eqref{bimoduleright},
and comultiplication and counit given by
\begin{eqnarray*}
\underline{\Delta}_r (a\# h) &=& (a\# h_{(1)}) \otimes^r_A (\um \# h_{(2)}), \\
\underline{\epsilon}_r (a\# h) &=&S(h)\cdot a. 
\end{eqnarray*}
\end{lemma}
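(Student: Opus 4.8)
The plan is to mirror the proof of \leref{coringleft}, replacing the left-handed data by the right-handed ones. Concretely, I would verify in turn that $\underline{\epsilon}_r$ and $\underline{\Delta}_r$ are well defined on the smash product, that they are $A$-linear for the module structure \eqref{bimoduleright}, that $\underline{\Delta}_r$ is coassociative, and finally the two counit axioms. As in \leref{coringleft}, the $A$-linearity and coassociativity of $\underline{\Delta}_r$ follow directly from the definition together with \eqref{propsmash2} and the coassociativity of $\Delta$, so the substance of the argument is the counit, together with one well-definedness check that has no analogue on the left-handed side.

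First I would check that $\underline{\epsilon}_r$ is well defined. Unlike $\underline{\epsilon}_l(a\# h)=a(h\cdot\um)$, whose compatibility with \eqref{propsmash} is immediate, the formula $\underline{\epsilon}_r(a\# h)=S(h)\cdot a$ must be tested against the defining relation $a\# h=a(h_{(1)}\cdot\um)\# h_{(2)}$. Thus I would introduce $\tilde{\epsilon}_r:A\ot H\to A$, $a\ot h\mapsto S(h)\cdot a$, and prove that it is unchanged under the idempotent projection defining $\underline{A\# H}$, i.e.
\[
S(h_{(2)})\cdot\big(a(h_{(1)}\cdot\um)\big)=S(h)\cdot a.
\]
This identity is where the hypotheses genuinely enter: expanding the left-hand side by (PLA2), using that for cocommutative $H$ the antipode is a coalgebra map (so $\Delta S=(S\ot S)\Delta$), then applying (PLA3) to $S(h_{(3)})\cdot(h_{(1)}\cdot\um)$ and reordering Sweedler legs by cocommutativity, one creates a factor $S(h_{(i)})h_{(i+1)}$ that collapses via the antipode axiom; recombining the remaining two legs by (PLA2) in reverse yields $S(h)\cdot(a\um)=S(h)\cdot a$.

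With $\underline{\epsilon}_r$ well defined, I would establish the counit axioms using the identifications $\mathcal{H}\ot^r_A A\cong\mathcal{H}$ via $\blacktriangleleft$ and $A\ot^r_A\mathcal{H}\cong\mathcal{H}$ via $\blacktriangleright$. For the right counit, $(\mathrm{id}\ot\underline{\epsilon}_r)\circ\underline{\Delta}_r(a\# h)=(a\# h_{(1)})\blacktriangleleft(S(h_{(2)})\cdot\um)$; writing out $\blacktriangleleft$ by \eqref{bimoduleright}, swapping two Sweedler legs by cocommutativity so that an $h$ and an $S(h)$ become adjacent, applying (PLA3), and collapsing the resulting $h_{(2)}S(h_{(3)})$ by the antipode axiom reduces the expression to $a(h_{(1)}\cdot\um)\# h_{(2)}=a\# h$ by \eqref{propsmash}. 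The left counit axiom $(\underline{\epsilon}_r\ot\mathrm{id})\circ\underline{\Delta}_r(a\# h)=(S(h_{(1)})\cdot a)\blacktriangleright(\um\# h_{(2)})$ is handled symmetrically: after expanding $\blacktriangleright$, swapping legs by cocommutativity, applying (PLA3) and the antipode axiom, one lands on $(h_{(1)}\cdot\um)a\# h_{(2)}$, which equals $a\# h$ by commutativity of $A$ and \eqref{propsmash}.

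The main obstacle is bookkeeping rather than conceptual: each step mixes the antipode, the partial-action axioms, and repeated use of cocommutativity to bring the correct Sweedler legs into position for an antipode collapse, and one must be scrupulous about which leg is the ``smash'' leg at each stage. The one genuinely new point compared with \leref{coringleft} is the well-definedness of $\underline{\epsilon}_r$, which is precisely the displayed identity and which relies essentially on the cocommutativity of $H$.
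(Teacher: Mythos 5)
Your proposal is correct and follows essentially the same route as the paper: the same chain of (PLA2), cocommutativity of $H$, (PLA3)/(PLA3'), the antipode axiom and \eqref{propsmash} drives every step, and your two counit-axiom computations match the paper's line by line. The well-definedness identity $S(h_{(2)})\cdot\big(a(h_{(1)}\cdot\um)\big)=S(h)\cdot a$ that you single out is exactly the computation the paper records as the $A$-linearity of $\underline{\epsilon}_r$ (take $a=1_A$ in $\underline{\epsilon}_r(a\blacktriangleright(b\#h))=(S(h)\cdot b)a$), so you have merely made explicit a point the paper treats implicitly.
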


\begin{proof} Let us begin with the $A$-linearity of the comultiplication. 
\beqnast 
\ud_r (a\blacktriangleright (b\# h )) & = & 
\ud_r (b(h_{(1)}\cdot a)\# h_{(2)}) 
 =  b(h_{(1)}\cdot a)\# h_{(2)} \otimes^r_A 1_A \# h_{(3)} \\
& = & a\blacktriangleright (b\# h_{(1)}) \otimes^r_A (1_A \# h_{(2)}) = a\blacktriangleright \ud_r (b\# h).
\eqnast
For the $A$-linearity of the counit, we have
\beqnast
\ue_r (a\blacktriangleright (b\# h )) & = & \ue_r (b(h_{(1)}\cdot a)\# h_{(2)}) 
 =  S(h_{(2)})\cdot (b(h_{(1)}\cdot a)) \\
& = & (S(h_{(3)})\cdot b)(S(h_{(2)})\cdot (h_{(1)}\cdot a))
 =  (S(h_{(3)})\cdot b)(S(h_{(1)})h_{(2)}\cdot a)\\
& = & (S(h)\cdot b)a 
 =  a\ue_r (b\# h)  .
\eqnast
The coassociativity is straightforward, then it remains to verify the counit axiom:
\beqnast
(I\otimes \ue_r )\circ \ud_r (a\# h) & = & (a\# h_{(1)})\blacktriangleleft (S(h_{(2)})\cdot 1_A) 
=a (h_{(1)}\cdot (S(h_{(3)})\cdot 1_A))\# h_{(2)} \\
& = & a (h_{(1)}\cdot 1_A ) (h_{(2)}S(h_{(4)})\cdot 1_A)\# h_{(3)} \\
& = & a (h_{(1)}\cdot 1_A ) (h_{(2)}S(h_{(3)})\cdot 1_A)\# h_{(4)} \\
& = & a (h_{(1)}\cdot 1_A )\# h_{(2)} 
 =  a\# h ,
\eqnast
and
\beqnast
(\ue_r \otimes I)\circ \ud_r (a\# h) & = & (S(h_{(1)})\cdot a) \blacktriangleright (1_A \# h_{(2)}) 
=(h_{(2)}\cdot (S(h_{(1)})\cdot a))\# h_{(3)} \\
& = & (h_{(2)}S(h_{(1)})\cdot a)(h_{(3)}\cdot 1_A) \# h_{(4)} 
 =  (h_{(1)}S(h_{(2)})\cdot a)(h_{(3)}\cdot 1_A) \# h_{(4)} \\
& = & a(h_{(1)}\cdot 1_A) \# h_{(2)} 
 =  a\# h .
\eqnast
Therefore, $\underline{A\# H}$ is an $A$-coalgebra with respect to the module structure \eqref{bimoduleright}.
\end{proof}

Although in the previous Lemma's we found coalgebra structures rather than coring structures on $\Hh$, there are still two different module structures on $\Hh$, so it still makes sense to consider the Takeuchi product. We obtain the following result.

\begin{lemma}\lelabel{Takprod} With notation as in \leref{coringleft} and \leref{coringright}.
\begin{enumerate}
\item The map $\ud_l$ is an algebra map, considered as a map to the left-Takeuchi tensor product 
\[
\underline{A\# H}\times_A \underline{A\# H} =\left\{ \sum x_i \otimes^l_A y_i \in \underline{A\# H}\otimes^l_A \underline{A\# H} |\sum a\blacktriangleright x_i \otimes^l_A y_i = \sum  x_i \otimes^l_A y_i \blacktriangleleft a\,, \; \forall a\in A \right\} .
\]
\item
The map $\ud_r $ is an algebra map considered as a map to the right-Takeuchi tensor product
\[
\underline{A\# H} {}_A \times \underline{A\# H} =\left\{ \sum x_i \otimes^r_A y_i \in \underline{A\# H}\otimes^r_A \underline{A\# H} |\sum a \triangleright x_i \otimes^r_A y_i =\sum x_i \otimes^r_A y_i\triangleleft a \,, \; \forall a\in A \right\} .
\]
\end{enumerate}
\end{lemma}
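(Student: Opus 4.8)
The statement bundles two assertions for each comultiplication: that its image actually lands in the displayed Takeuchi subspace (so that factorwise multiplication is well defined there), and that it is then multiplicative and unital. Unitality is immediate, since $\ud_l(1_A\# 1_H)=(1_A\# 1_H)\otimes^l_A(1_A\# 1_H)$ and similarly for $\ud_r$, which are the units of the respective Takeuchi products. The whole verification is a Sweedler computation whose only inputs are the smash identities \eqref{propsmash} and \eqref{propsmash2}, the axioms (PLA1)--(PLA3$'$), cocommutativity of $H$, commutativity of $A$, and the two balancing rules: across $\otimes^l_A$ one may freely slide plain elements of $A$, because the structure \eqref{bimoduleleft} is ordinary multiplication in the $A$-slot, whereas across $\otimes^r_A$ one may only slide right factors of the form $c\# 1_H=s(c)$ coming from the twisted structure \eqref{bimoduleright}.

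For item (1) I would first settle membership in $\underline{A\# H}\times_A\underline{A\# H}$. Applying $a\blacktriangleright(-)$ to the first leg of $\ud_l(a'\# h)$ yields, by \eqref{bimoduleright}, the first leg $a'(h_{(1)}\cdot a)\# h_{(2)}$, while applying $(-)\blacktriangleleft a$ to the second leg yields $(h_{(2)}\cdot a)\# h_{(3)}$; since $\otimes^l_A$ permits sliding the plain element $(h_{(2)}\cdot a)$ back to the first leg, the two sides are identified after one use of cocommutativity to swap the Sweedler indices of $h$. For multiplicativity I would compare $\ud_l\big((a\# h)(b\# k)\big)=\ud_l\big(a(h_{(1)}\cdot b)\# h_{(2)}k\big)$ with the factorwise product of $\ud_l(a\# h)$ and $\ud_l(b\# k)$, using \eqref{propsmash2} in each leg. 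The two differ only by a spurious factor $(h_{(3)}\cdot 1_A)$ in the second leg; I would slide it to the first leg across $\otimes^l_A$, swap indices by cocommutativity, and absorb it by \eqref{propsmash} together with (PLA2) in the form $(h_{(1)}\cdot b)(h_{(2)}\cdot 1_A)=h\cdot b$.

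Item (2) has the same shape but is where the real difficulty lies, and I expect it to be the main obstacle. The tensor is now taken over $\otimes^r_A$, whose balancing slides only right factors $c\# 1_H$, yet the defining condition of $\underline{A\# H}\,{}_A\times\underline{A\# H}$ asks me to equate the two left-multiplications by $s(a)=a\# 1_H$, namely $(aa'\# h_{(1)})\otimes^r_A(1_A\# h_{(2)})$ against $(a'\# h_{(1)})\otimes^r_A(a\# h_{(2)})$. As $s(a)$ is not central and left-multiplication by it is not of the slideable form, no direct transfer across $\otimes^r_A$ is available. My key step would be to use the antipode to convert this left-multiplication into a slideable right-multiplication: I would rewrite the relevant occurrence of $a$ via $S(h_{(2)})\cdot a$ and recover it using (PLA3)/(PLA3$'$) together with $\sum h_{(1)}S(h_{(2)})=\epsilon(h)1_H$, all under cocommutativity, thereby producing a factor $c\# 1_H$ that may legitimately be moved between the two legs. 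The same antipode bookkeeping resolves the spurious $(h_{(3)}\cdot 1_A)$ that appears in the multiplicativity check for $\ud_r$, and wherever possible I would recycle the counit computations already carried out in \leref{coringright} to keep this bookkeeping short.
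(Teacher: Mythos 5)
Your proposal is correct and follows essentially the same route as the paper: for part (1) the paper likewise slides plain $A$-factors across $\ot^l_A$ using cocommutativity and absorbs the spurious $(h_{(3)}\cdot 1_A)$ via \eqref{propsmash} and (PLA2), and for part (2) the paper's appendix proof uses exactly your antipode trick, converting the plain left multiplication into $\blacktriangleleft\,(S(h_{(2)})\cdot b)$ via (PLA3) and $h_{(1)}S(h_{(2)})=\epsilon(h)1_H$ under cocommutativity so that it slides across $\ot^r_A$. Your identification of part (2) as the genuinely delicate point, and of the precise balancing rules for the two tensor products, matches the paper's treatment.
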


\begin{proof} We only prove \ul{(1)}, the proof on part \ul{(2)} is left to the reader. First, we verify that the image of $ud_l$ lies in the Takeuchi product. Consider $a\# h \in \underline{A\# H}$ and $b\in A$, then
\beqnast
b \blacktriangleright (a\# h_{(1)})\otimes^l_A (\um \# h_{(2)}) 
& = & (a(h_{(1)}\cdot b)\# h_{(2)})\otimes^l_A (\um \# h_{(3)}) \\
&&\hspace{-3cm} =  (a\# h_{(2)})\triangleleft (h_{(1)}\cdot b)\otimes^l_A (\um \# h_{(3)}) 
 =  (a\# h_{(1)})\triangleleft (h_{(2)}\cdot b)\otimes^l_A (\um \# h_{(3)}) \\
&&\hspace{-3cm} =  (a\# h_{(1)})\otimes^l_A (h_{(2)}\cdot b)\triangleright (\um \# h_{(3)}) 
 =  (a\# h_{(1)})\otimes^l_A ((h_{(2)}\cdot b)\# h_{(3)}) \\
&&\hspace{-3cm} =  (a\# h_{(1)})\otimes^l_A (\um \# h_{(2)})\blacktriangleleft b.
\eqnast
Next, let us verify that the co-restriction $\underline{\Delta}_l:\mathcal{H}\rightarrow \mathcal{H}\times_A \mathcal{H}$ is an algebra morphism. On one hand we have
\beqnast
\underline{\Delta}_l((a\# h)(b\# k)) & = & \underline{\Delta}_l (a(h_{(1)}\cdot b)\# h_{(2)}k)
 =  (a(h_{(1)}\cdot b)\# h_{(2)}k_{(1)})\otimes^l_A (\um \# h_{(3)}k_{(2)});
\eqnast
on the other hand, we have
\beqnast
\underline{\Delta}_l(a\# h)\underline{\Delta}_l(b\# k) & = & [(a\# h_{(1)})\otimes^l_A (\um \# h_{(2)})][(b\# k_{(1)})\otimes^l_A (\um \# k_{(2)})]\\
& = & (a\# h_{(1)})(b\# k_{(1)})\otimes^l_A (\um \# h_{(2)})(\um \# k_{(2)}) \\
& = & (a(h_{(1)}\cdot b)\# h_{(2)}k_{(1)})\otimes^l_A ((h_{(3)} \cdot \um ) \# h_{(4)}k_{(2)})\\
& = & (a(h_{(1)}\cdot b)\# h_{(2)}k_{(1)})\otimes^l_A (h_{(3)} \cdot \um )\triangleright (\um \# h_{(4)}k_{(2)})\\
& = & (a(h_{(1)}\cdot b)\# h_{(2)}k_{(1)}) \triangleleft (h_{(3)} \cdot \um )  \otimes^l_A (\um \# h_{(4)}k_{(2)})\\
& = & (a(h_{(1)}\cdot b)\# h_{(2)}k_{(1)})\otimes^l_A (\um \# h_{(3)}k_{(2)}) .
\eqnast
Therefore, $\ud_l$ is an algebra morphism.
\end{proof}

\begin{lemma}\lelabel{counitprops} With notation as in \leref{coringleft} and \leref{coringright}, the following identities hold for all $x,y\in\underline{A\# H}$.
\begin{enumerate}
\item $\underline{\epsilon}_l(1_{\mathcal{H}}) =1_A = \underline{\epsilon}_r (1_{\mathcal{H}}) $;
\item $\underline{\epsilon}_l(xy)= \ue_l(x \blacktriangleleft\ue_l(y)) =\underline{\epsilon}_l(xs(\underline{\epsilon}_l(y)))$ ;
\item $\underline{\epsilon}_r(xy)=\ue_r(\ue_r(x)\triangleright y) =\underline{\epsilon}_r(s (\underline{\epsilon}_r(x))y)$.
\end{enumerate}
\end{lemma}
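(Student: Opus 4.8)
The plan is to verify all three identities on generators $x=a\# h$ and $y=b\# k$, extending by $k$-linearity; throughout I would use the multiplication rule \eqref{propsmash2}, the normalization \eqref{propsmash}, the two module structures \eqref{bimoduleleft} and \eqref{bimoduleright}, and the axioms (PLA1)--(PLA3'). Part (1) is immediate: evaluating at $1_{\mathcal H}=\um\# 1_H$ gives $\ue_l(\um\#1_H)=\um(1_H\cdot\um)=\um$ by (PLA1), and $\ue_r(\um\#1_H)=S(1_H)\cdot\um=1_H\cdot\um=\um$ since $S(1_H)=1_H$.

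Before treating (2) and (3), I would record two bookkeeping identities that dispose of the second equality in each line at once. For $c\in A$ one has $x\blacktriangleleft c=xs(c)$ and $c\triangleright y=s(c)y$: indeed, since $s(c)=c\# 1_H$, the rule \eqref{propsmash2} gives $(a\# h)(c\# 1_H)=a(h_{(1)}\cdot c)\# h_{(2)}$, which is exactly $(a\# h)\blacktriangleleft c$ by \eqref{bimoduleright}, and symmetrically $(c\# 1_H)(b\# k)=cb\# k=c\triangleright(b\# k)$ by \eqref{bimoduleleft}. Taking $c=\ue_l(y)$ and $c=\ue_r(x)$ respectively, the equalities $\ue_l(x\blacktriangleleft\ue_l(y))=\ue_l(xs(\ue_l(y)))$ and $\ue_r(\ue_r(x)\triangleright y)=\ue_r(s(\ue_r(x))y)$ follow for free, so it remains to prove the first equality in (2) and in (3).

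For (2) I would compute both sides of $\ue_l(xy)=\ue_l(x\blacktriangleleft\ue_l(y))$ directly. The left side is $\ue_l(a(h_{(1)}\cdot b)\# h_{(2)}k)=a(h_{(1)}\cdot b)\big((h_{(2)}k)\cdot\um\big)$. For the right side, $\ue_l(y)=b(k\cdot\um)$, and \eqref{bimoduleright} together with the counit formula give $a\big(h\cdot(b(k\cdot\um))\big)$ after using the elementary consequence $(g_{(1)}\cdot\um)(g_{(2)}\cdot c)=g\cdot c$ of (PLA2); expanding with (PLA2) and then (PLA3) produces $a(h_{(1)}\cdot b)(h_{(2)}\cdot\um)(h_{(3)}k\cdot\um)$, and a single regrouping via coassociativity and $(g_{(1)}\cdot\um)(g_{(2)}\cdot c)=g\cdot c$ collapses the middle factor to yield $a(h_{(1)}\cdot b)(h_{(2)}k\cdot\um)$. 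The two sides agree; note that cocommutativity is not used here.

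Part (3) is the real work and where the main obstacle lies. I would expand $\ue_r(xy)=S(h_{(2)}k)\cdot\big(a(h_{(1)}\cdot b)\big)$ and $\ue_r(\ue_r(x)\triangleright y)=S(k)\cdot\big((S(h)\cdot a)b\big)$ using (PLA2), (PLA3) and the antipode axioms. For the left side, repeated use of (PLA2)/(PLA3) produces a higher coproduct of $h$ carrying a factor $S(h_{(4)})h_{(1)}$; exploiting the cocommutativity of $H$ (so that the legs of $\Delta^{(3)}(h)$ may be freely permuted and $\Delta\circ S=(S\otimes S)\circ\Delta$ holds) I would relabel so that these two legs become consecutive and collapse them by the antipode axiom to $\epsilon(\cdot)1_H$, leaving the two-leg expression $(S(k_{(1)})S(h)\cdot a)(S(k_{(2)})\cdot b)$. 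The right side reduces by (PLA2) and cocommutativity to $(S(k_{(1)})\cdot(S(h)\cdot a))(S(k_{(2)})\cdot b)$. The last step reconciles these two: expanding the nested action $S(k_{(1)})\cdot(S(h)\cdot a)$ by (PLA3) and the factor $S(k_{(2)})\cdot b$ by (PLA2) puts both into a common three-leg form, after which commutativity of $A$ (to reorder the idempotent factor $S(\cdot)\cdot\um$) and cocommutativity (to swap the two outer $k$-legs) show they coincide. The delicate point throughout is the Sweedler-index bookkeeping and choosing the order of (PLA2)/(PLA3) applications so that exactly one copy of $h$ survives and the antipode cancellations are legitimate.
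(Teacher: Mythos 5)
Your proposal is correct and follows essentially the same route as the paper: verification on generators $a\# h$, $b\# k$ using (PLA2), (PLA3)/(PLA3'), the antipode cancellation $S(h_{(1)})h_{(2)}=\epsilon(h)1_H$ enabled by cocommutativity of $H$, and commutativity of $A$, with both arguments reducing the two sides of (3) to the common form $(S(k_{(1)})S(h)\cdot a)(S(k_{(2)})\cdot b)$ (the paper pushes the left side all the way to $S(k)\cdot((S(h)\cdot a)b)$ rather than meeting in the middle, a purely presentational difference). Your explicit observation that $x\blacktriangleleft c=xs(c)$ and $c\triangleright y=s(c)y$, which settles the second equality in (2) and (3), is a point the paper leaves implicit, and your remark that (2) needs no cocommutativity also matches the paper's computation.
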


\begin{proof} The identity \ul{(1)} is straightforward. For \ul{(2)}, 
take $x=a\# h$ and $y=b\# k$. Then, on one hand, we have
\beqnast
\underline{\epsilon}_l((a\# h)(b\# k)) & = & 
\underline{\epsilon}_l(a(h_{(1)}\cdot b )\#h_{(2)}k ) 
 =  a(h_{(1)} \cdot b )(h_{(2)}k\cdot \um ) \\
& = & a(h_{(1)} \cdot b )(h_{(2)} \cdot \um )(h_{(3)}k\cdot \um ) 
 =  a(h_{(1)} \cdot b )(h_{(2)} \cdot (k\cdot \um )) \\
& = & a(h\cdot (b(k\cdot \um ))) . 
\eqnast
On the other hand,
\beqnast
\underline{\epsilon}_l\big((a\# h) \blacktriangleleft \underline{\epsilon}_l(b\# k)\big) & = & 
\underline{\epsilon}_l ((a\# h)\blacktriangleleft b(k\cdot \um )) 
 = 
\underline{\epsilon}_l( a(h_{(1)}\cdot (b(k\cdot \um )))\# h_{(2)}) \\
& =&  a(h_{(1)}\cdot (b(k\cdot \um )))(h_{(2)} \cdot \um ) 
 =  a(h\cdot (b(k\cdot \um ))) . 
\eqnast
For \ul{(3)} take again $x=a\# h$ and $y=b\# k$. Then,
\beqnast
\underline{\epsilon}_r((a\# h)(b\# k)) & = & 
\underline{\epsilon}_r(a(h_{(1)}\cdot b )\#h_{(2)}k ) 
 =  S(h_{(2)}k)\cdot (a(h_{(1)}\cdot b )) \\
&&\hspace{-3cm} =  (S(h_{(3)}k_{(2)})\cdot a )(S(h_{(2)}k_{(1)})\cdot (h_{(1)}\cdot b )) 
 =  (S(h_{(3)}k_{(2)})\cdot a )(S(k_{(1)})S(h_{(1)})h_{(2)}\cdot b ) \\
&&\hspace{-3cm} =  (S(hk_{(2)})\cdot a )(S(k_{(1)})\cdot b ) 
 =  (S(k_{(2)})S(h)\cdot a )(S(k_{(1)})\cdot b ) \\
&&\hspace{-3cm} =  S(k)\cdot ((S(h)\cdot a ) b) 
 =  \ue_r ((S(h)\cdot a ) b\# k) \\
&&\hspace{-3cm} =  \ue_r ((S(h)\cdot a )\triangleright ( b\# k)) 
 =  \ue_r (\ue_r  (a\# h)\triangleright ( b\# k)). 
\eqnast
\end{proof}

Combining the results of Lemma's \ref{le:coringleft}, \ref{le:coringright}, \ref{le:Takprod} and \ref{le:counitprops} so far we have obtained that $\mathcal{H}=\underline{A\# H}$ has the structures of left and right $A$-bialgebroid. In order to prove that it is a Hopf algebroid we need to define the antipode map
\[
\mathcal{S}:\Hh\to\Hh,\quad {\mathcal S}(a\# h)=(S(h_{(2)})\cdot a)\# S(h_{(1)}), \forall a\# h\in \Hh .
\]

\begin{thm}\thlabel{Hoidpartialaction} 
Using notation as in Lemma's \ref{le:coringleft} and \ref{le:coringright}, the data 
$$(\underline{A\# H}, A, s_l,t_l, s_r ,t_r \ud_l ,\ue_l , \ud_r ,\ue_r ,\mathcal{S})$$ 
define a structure of a Hopf algebroid over the base algebra $A$.
\end{thm}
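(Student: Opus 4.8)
The plan is to treat Lemma's \ref{le:coringleft}, \ref{le:coringright}, \ref{le:Takprod} and \ref{le:counitprops} as already supplying the left and right $A$-bialgebroid structures on $\mathcal{H}=\underline{A\# H}$ (as observed in the paragraph preceding the statement), so that only two things remain: first, that the stated formula for $\mathcal S$ really defines an anti-algebra endomorphism of $\mathcal H$, and second, that the four antipode axioms (i)--(iv) of the Hopf algebroid definition hold. Throughout I would exploit the two standing simplifications: since $A$ is commutative we have $\tilde A=A$ and all four structure maps coincide with the single algebra map $s:a\mapsto a\#1_H$, while cocommutativity of $H$ lets me freely permute the tensor factors produced by iterated comultiplication.

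Well-definedness comes first. Since $a\# h$ really denotes the element $a(h_{(1)}\cdot\um)\ot h_{(2)}$, I would check that the assignment $a\ot h\mapsto (S(h_{(2)})\cdot a)\# S(h_{(1)})$ is compatible with \eqref{propsmash}, i.e.\ that applying it to $a(h_{(1)}\cdot\um)\ot h_{(2)}$ reproduces $(S(h_{(2)})\cdot a)\# S(h_{(1)})$; this is a direct Sweedler computation using (PLA2) together with the antipode and counit axioms of $H$. Having $\mathcal S$ well defined, I then verify it reverses products, $\mathcal S\big((a\# h)(b\# k)\big)=\mathcal S(b\# k)\mathcal S(a\# h)$, starting from the product rule \eqref{propsmash2}; here commutativity of $A$, cocommutativity of $H$, and axioms (PLA2)--(PLA3$'$) are all used to reorganise the resulting expression. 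I expect this anti-multiplicativity, together with axiom (iv) below, to be the computational heart of the proof.

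The first three antipode axioms are comparatively soft. For (i), a one-line computation gives $\ue_l\circ s=\ue_r\circ s=\mathrm{id}_A$ (using $1_H\cdot\um=\um$ and $S(1_H)=1_H$), and since every source and target map equals $s$, each of the four identities in (i) collapses to $s=s$. For (ii), both $\ud_l$ and $\ud_r$ have the same shape $(a\# h_{(1)})\ot(\um\# h_{(2)})$, differing only in which module structure defines the tensor product, so both compatibility identities reduce to coassociativity of $\Delta_H$ applied to $h$. For (iii), noting $\mathcal S(s(a))=\mathcal S(a\#1_H)=(S(1_H)\cdot a)\# S(1_H)=a\#1_H=s(a)$, the identity $\mathcal S(t_l(a)\,h\,t_r(b'))=s_r(b')\mathcal S(h)s_l(a)$ follows at once from anti-multiplicativity together with $\mathcal S\circ s=s$.

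The remaining axiom (iv) is where the real work lies. To verify $\mu_{\mathcal H}\circ(\mathcal S\ot I)\circ\ud_l=s_r\circ\ue_r$, I would compute $(\mathcal S\ot I)\ud_l(a\# h)=\big((S(h_{(2)})\cdot a)\# S(h_{(1)})\big)\ot(\um\# h_{(3)})$, multiply out by \eqref{propsmash2}, and arrive at $(S(h_{(3)})\cdot a)(S(h_{(2)})\cdot\um)\# S(h_{(1)})h_{(4)}$. Using cocommutativity I would then bring the factors $S(h_{(1)})$ and $h_{(4)}$ into adjacent positions so that the antipode relation $S(h_{(1)})h_{(2)}=\epsilon(h)1_H$ contracts them, leaving a term of the form $(S(h_{(2)})\cdot a)(S(h_{(1)})\cdot\um)\#1_H$; finally (PLA2) in the collapsed form $(g_{(1)}\cdot a)(g_{(2)}\cdot\um)=g\cdot a$, applied with $g=S(h)$, turns this into $(S(h)\cdot a)\#1_H=s_r(\ue_r(a\# h))$. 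The second identity in (iv) is handled symmetrically starting from $\ud_r$. The main obstacle, as anticipated, is controlling the four-fold Sweedler indices in these last computations and in the anti-multiplicativity check: the crucial non-routine moves are the cocommutativity reorderings that expose an $S(h_{(1)})h_{(2)}$ contraction, and the repeated use of the partial-action identity $(g_{(1)}\cdot a)(g_{(2)}\cdot\um)=g\cdot a$ to collapse products back into the base algebra $A$.
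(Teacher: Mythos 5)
Your proposal is correct and follows essentially the same strategy as the paper: take the left/right bialgebroid structures from Lemmas \ref{le:coringleft}, \ref{le:coringright}, \ref{le:Takprod} and \ref{le:counitprops}, prove $\mathcal S$ is anti-multiplicative, and then verify the antipode axioms (i)--(iv), with (i) collapsing via $\ue_l\circ s=\ue_r\circ s=\mathrm{id}_A$, (ii) reducing to coassociativity of $\Delta_H$, and (iv) done by exactly the Sweedler/cocommutativity computation you describe (your contraction of $S(h_{(1)})$ against a later leg and the final collapse $(g_{(1)}\cdot a)(g_{(2)}\cdot\um)=g\cdot a$ with $g=S(h)$ match the paper's calculation line by line).

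The one place you genuinely diverge is axiom (iii): the paper proves it by a separate, fairly long direct Sweedler computation, whereas you deduce it in one line from anti-multiplicativity together with $\mathcal S\circ s=s$ --- since all four source/target maps coincide with $s$, one has $\mathcal S\bigl(t_l(a)\,x\,t_r(c)\bigr)=\mathcal S(s(c))\,\mathcal S(x)\,\mathcal S(s(a))=s_r(c)\,\mathcal S(x)\,s_l(a)$. This is a cleaner route that makes the anti-multiplicativity check do double duty and removes one of the two big computations. You also insist on checking that $\mathcal S$ is well defined against the identification $a\# h=a(h_{(1)}\cdot\um)\#h_{(2)}$ of \eqref{propsmash}, a point the paper passes over silently; that check (or, equivalently, defining $\mathcal S$ on all of $A\ot H$ and restricting) is indeed needed, and your sketch of it via (PLA2), cocommutativity and the antipode axioms is correct.
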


\begin{proof} 
As $(\underline{A\# H} , A, s_l ,t_l , \ud_l ,\ue_l )$ is a left $A$ bialgebroid, and $(\underline{A\# H} , A, s_r ,t_r , \ud_r ,\ue_r )$  is a right $A$ bialgebroid, we need only to verify that $\mathcal{S}$ is an anti-algebra morphism and that the Hopf algebroid axioms (i)-(iv) recalled on page \pageref{defHopfalgebroid} are satisfied. We restrict ourselves here to axioms (iii) and (iv).

For the Hopf algebroid axiom (iii), take $a,c\in A$ and $b\# h\in \underline{A\# H}$, then 
\beqnast
\mathcal{S}(t_l (a)(b\# h)t_r (c)) & = & \mathcal{S} ((a\# 1_H)(b\# h)(c\# 1_H))
 =  \mathcal{S} (ab(h_{(1)}\cdot c)\# h_{(2)})\\
&&\hspace{-3cm} =  (S(h_{(3)})\cdot (ab(h_{(1)}\cdot c)))\# S(h_{(2)})
 =  (S(h_{(5)})\cdot a)(S(h_{(4)})\cdot b)(S(h_{(3)})\cdot 
(h_{(1)}\cdot c))\# S(h_{(2)})\\
&&\hspace{-3cm} =  (S(h_{(5)})\cdot a)(S(h_{(4)})\cdot b)(S(h_{(2)})h_{(3)}\cdot c)\# S(h_{(1)})
 =  (S(h_{(3)})\cdot a)(S(h_{(2)})\cdot b) c\# S(h_{(1)})\\
&&\hspace{-3cm} =  (c\# 1_H) ((S(h_{(3)})\cdot b)(S(h_{(2)})\cdot a) \# S(h_{(1)}))
 =  (c\# 1_H) ((S(h_{(2)})\cdot b)\# S(h_{(1)}))(a\# 1_H )\\
&&\hspace{-3cm} =  s_r( c)\mathcal{S}(b\# h) s_l (a) .
\eqnast
Finally, for the Hopf algebroid axiom (iv), take $a\# h \in \underline{A\# H}$, then
\beqnast
\mu \circ (\mathcal{S} \otimes I)\circ \ud_l (a\# h) & = & \mathcal{S} (a\# h_{(1)})
(1_A \# h_{(2)}) 
 =  ((S(h_{(2)})\cdot a)\# S(h_{(1)}))(1_A \# h_{(3)})\\
&&\hspace{-3cm} =  (S(h_{(3)})\cdot a)(S(h_{(2)})\cdot 1_A ) \# S(h_{(1)})h_{(4)}
 =  (S(h_{(3)})\cdot a)\# S(h_{(1)})h_{(2)}\\
&&\hspace{-3cm} =  (S(h)\cdot a) \# 1_H 
 =  s_r \circ \ue_r (a\# h) ,
\eqnast
and 
\beqnast
\mu \circ (I\otimes S)\circ \ud_r (a\# h) & = & (a\# h_{(1)})\mathcal{S}(1_A \# h_{(2)}) 
 =  (a\# h_{(1)})((S(h_{(3)})\cdot 1_A )\# S(h_{(2)})) \\
&&\hspace{-3cm} =  a(h_{(1)}\cdot (S(h_{(4)})\cdot 1_A ))\# h_{(2)} S(h_{(3)}) 
 =  a(h_{(1)}\cdot (S(h_{(2)})\cdot 1_A ))\# 1_H \\
&&\hspace{-3cm} =  a(h_{(1)}\cdot 1_A)(h_{(2)}S(h_{(3)})\cdot 1_A )\# 1_H 
 =  a(h\cdot 1_A)\# 1_H \\
&&\hspace{-3cm} =  s_l \circ \ue_l (a\# h).
\eqnast

Therefore, $\underline{A\# H}$ is a Hopf algebroid.
\end{proof}

\section{Partial comodule algebras}\selabel{comodulealgebras}

\subsection{Symmetric partial comodule algebras}

\subsubsection{Definitions  and examples}

In \cite{caen06} the notion of a partial comodule algebra was introduced. We recall this notion here and complete it with a symmetry axiom.

\begin{defi} Let $H$ be a Hopf algebra. A unital algebra $A$ is said to be a right partial $H$-comodule algebra, or is said to possess a partial coaction of $H$, if there exists a linear map
\[
\begin{array}{rccl} \prho : & A & \rightarrow & A\otimes H \\
                    \,     & a & \mapsto     & \prho(a)= a^{[0]}\otimes a^{[1]}
\end{array}\]
that satisfies the following identities:
\begin{enumerate}
\item[(PRHCA1)] for every $a,b \in A$, $\prho (ab)=\prho (a) \prho (b)$;
\item[(PRHCA2)] for every $a\in A$, $(I\otimes \epsilon)\prho (a) =a$;
\item[(PRHCA3)] for every $a\in A$, $(\prho \otimes I)\prho (a)=[(I\otimes \Delta)\prho (a)](\prho (\um )\otimes1_H)$. 
\end{enumerate} 
A partial coaction is moreover called {\em symmetric} if 
\begin{enumerate}
\item[(PRHCA4)] for every $a\in A$, $(\prho \otimes I)\rho (a)=(\prho (\um )\otimes1_H)[(I\otimes \Delta)\prho (a)]$.
\end{enumerate}
Let $A$ and $B$ be two right partial $H$ comodule algebras. We say that $f:A\to B$ is a morphism of partial comodule algebras if it is an algebra morphism such that $\prho_B \circ f = (f\otimes I)\circ \prho_A$. 
The category of right partial $H$ comodule algebras and their morphisms is denoted by $\ParCoAct_H$.
\end{defi}
Denoting the partial coaction in a Sweedler-type notation,
\[
\prho (a) = a^{[0]}\otimes a^{[1]} ,
\]
we can rewrite the axioms for partial coactions in the following manner:
\begin{enumerate}
\item[(PRHCA1)] $(ab)^{[0]}\otimes (ab)^{[1]} =a^{[0]}b^{[0]} \otimes a^{[1]}b^{[1]}$;
\item[(PRHCA2)] $a^{[0]}\epsilon(a^{[1]})=a$;
\item[(PRHCA3)] $ a^{[0][0]}\otimes a^{[0][1]}\otimes a^{[1]} =a^{[0]}\um^{[0]}\otimes {a^{[1]}}_{(1)}\um^{[1]}\otimes {a^{[1]}}_{(2)}$;
\item[(PRHCA4)] $ a^{[0][0]}\otimes a^{[0][1]}\otimes a^{[1]} =\um^{[0]}a^{[0]}\otimes \um^{[1]}{a^{[1]}}_{(1)}\otimes {a^{[1]}}_{(2)}$.
\end{enumerate}

Symmetrically, one can define also the notion of a {\em left} partial $H$-comodule algebras, but throughout this text, we deal basically with right partial comodule algebras. It is important to note that, as the partial coaction $\prho :A\rightarrow A\otimes H$ is a morphism of algebras, $\prho (1_A )$ is an idempotent in the algebra $A\otimes H$, and for any $a\in A$ we have $\prho (a)=\prho (a) \prho (1_A )=\prho(1_A)\prho(a)$. Remark however, that $\prho(1_A)$ is only central in the image of $\prho$, and not in the whole of $A\ot H$. We obtain that the image of the coaction is contained in the unitary ideal $A\underline{\otimes} H=(A\otimes H)\prho (1_A )$ and the projection $\pi :A\otimes H \rightarrow A\underline{\otimes} H$ is given simply by the multiplication by $\prho (1_A )=1^{[0]} \otimes 1^{[1]}$. A typical element in $A\underline{\otimes} H$ can be written as
\[
x= \sum_i a^i 1^{[0]} \otimes h^i 1^{[1]} , \quad \mbox{ for  } \, a^i \in A , \mbox{  and  } \, h^i \in H .
\]

Let us recall the following basic example from \cite{AB}.
\begin{example}
Let $H$ be a Hopf algebra and $B$ a right $H$-comodule algebra with coaction $\rho:B\to B\ot H$. Suppose that $A\subset B$ is a unital ideal in $B$, then $A$ is right partial $H$-comodule algebra, with coaction $\prho:A\to A\ot H,\ \prho(a)=(1_A\ot 1_H)\rho(a)$.
\end{example}

\subsubsection{Duality between partial actions and partial coactions}

There is a natural duality between partial actions and partial coactions, as was observed in \cite{AB} and \cite{caen06}, which moreover holds in the symmetric case, as we will point out now.

\begin{prop}\prlabel{coactiontoaction}
Consider a dual pairing of Hopf algebras $\bk{-,-}:H\ot K\to k$ and let $A$ be a symmetric right partial $K$-comodule algebra.
Then the map
\[
\begin{array}{rccc}
\cdot : & H \otimes A & \rightarrow & A\\
\, & f\otimes a & \mapsto & f\cdot a=\sum a^{[0]} 
\langle f,a^{[1]} \rangle
\end{array}
\]
is a symmetric left partial action of $H$ on $A$. This construction yields a functor
$$\Phi:\ParCoAct_K\to {_H\ParAct}.$$
\end{prop}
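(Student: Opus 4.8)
The plan is to verify directly that the proposed map $\cdot$ is well defined and satisfies the four axioms (PLA1)--(PLA3') of a symmetric left partial action, and then to check functoriality. Well-definedness is immediate, since $a^{[0]}\in A$ and $\bk{f,a^{[1]}}\in k$, so indeed $f\cdot a\in A$. The only ingredients I will use are the four coaction axioms (PRHCA1)--(PRHCA4) and the fact that, being a dual pairing of Hopf algebras, $\bk{-,-}$ distributes a product in $K$ over the coproduct of $H$, $\bk{h,xy}=\bk{h_{(1)},x}\bk{h_{(2)},y}$ with $\bk{1_H,x}=\epsilon_K(x)$, and likewise distributes a product in $H$ over the coproduct of $K$, $\bk{hg,x}=\bk{h,x_{(1)}}\bk{g,x_{(2)}}$.

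The two elementary axioms are quick. For (PLA1), $1_H\cdot a=a^{[0]}\bk{1_H,a^{[1]}}=a^{[0]}\epsilon_K(a^{[1]})=a$ by (PRHCA2). For (PLA2), I would write $h\cdot(ab)=(ab)^{[0]}\bk{h,(ab)^{[1]}}$, replace $(ab)^{[0]}\otimes(ab)^{[1]}$ by $a^{[0]}b^{[0]}\otimes a^{[1]}b^{[1]}$ using (PRHCA1), split $\bk{h,a^{[1]}b^{[1]}}=\bk{h_{(1)},a^{[1]}}\bk{h_{(2)},b^{[1]}}$, and reassemble the result as $(h_{(1)}\cdot a)(h_{(2)}\cdot b)$.

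The heart of the argument is (PLA3) and (PLA3'), and this is where care is required. Iterating the action gives $h\cdot(k\cdot a)=a^{[0][0]}\bk{h,a^{[0][1]}}\bk{k,a^{[1]}}$, so the doubly-applied coaction $a^{[0][0]}\otimes a^{[0][1]}\otimes a^{[1]}$ appears and one substitutes a symmetry axiom of the coaction. Using (PRHCA4) this becomes $1_A^{[0]}a^{[0]}\bk{h,1_A^{[1]}{a^{[1]}}_{(1)}}\bk{k,{a^{[1]}}_{(2)}}$; splitting the first pairing over the coproduct of $H$ and then recombining $\bk{h_{(2)},{a^{[1]}}_{(1)}}\bk{k,{a^{[1]}}_{(2)}}=\bk{h_{(2)}k,a^{[1]}}$ over the coproduct of $K$ yields precisely $(h_{(1)}\cdot 1_A)(h_{(2)}k\cdot a)$, which is (PLA3). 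Substituting (PRHCA3) instead gives $a^{[0]}1_A^{[0]}$ with the two legs in the opposite order and, after the same manipulations, produces $(h_{(1)}k\cdot a)(h_{(2)}\cdot 1_A)$, which is (PLA3'). The main obstacle is exactly this bookkeeping: because $A$ is not assumed commutative, the position of the unit leg relative to $a^{[0]}$ is what separates the two axioms, so one must pair (PRHCA4) with (PLA3) and (PRHCA3) with (PLA3') and resist interchanging them.

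Finally, for functoriality, let $f\colon A\to B$ be a morphism in $\ParCoAct_K$, so $f$ is an algebra map with $\prho_B\circ f=(f\otimes I)\circ\prho_A$, whence $f(a)^{[0]}\otimes f(a)^{[1]}=f(a^{[0]})\otimes a^{[1]}$. Then $h\cdot_B f(a)=f(a)^{[0]}\bk{h,f(a)^{[1]}}=f(a^{[0]})\bk{h,a^{[1]}}=f(h\cdot_A a)$, so $f$ is also a morphism of left partial $H$-module algebras. Since $\Phi$ leaves the underlying algebra and the underlying map untouched, it trivially preserves identities and composition, and therefore defines the claimed functor $\Phi\colon\ParCoAct_K\to{}_H\ParAct$.
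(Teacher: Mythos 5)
Your proof is correct and follows essentially the same route as the paper: the key computation substitutes (PRHCA4) into the iterated action to obtain (PLA3) and (PRHCA3) to obtain (PLA3'), exactly matching the paper's pairing of axioms, with the same use of the two pairing compatibilities. The only difference is that you write out (PLA1), (PLA2) and functoriality explicitly, which the paper instead delegates to the reference \cite{AB} and to the evident naturality of the construction.
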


\begin{proof} Let us just check the duality between the symmetry of the coaction and the action. For the remaining part, we refer to \cite{AB}  where the statement was proven in case $k$ is a field, but the proof generalizes without any problem to any commutative base ring $k$. Consider $a\in A$ and $h,k\in H$, then 
\beqnast
h\cdot (k\cdot a) & = & h\cdot (a^{[0]} \langle k ,a^{[1]} \rangle =a^{[0][0]} \langle h, a^{[0][1]} \rangle \langle k ,a^{[1]} \rangle \\
& = & a^{[0]} 1^{[0]} \langle h, {a^{[1]}}_{(1)} 1^{[1]} \rangle \langle k ,{a^{[1]}}_{(2)} \rangle \\
& = & a^{[0]} 1^{[0]} \langle h_{(1)}, {a^{[1]}}_{(1)} \rangle \langle h_{(2)} , 1^{[1]} \rangle  \langle k ,{a^{[1]}}_{(2)} \rangle \\
& = & a^{[0]} 1^{[0]} \langle h_{(1)}k, a^{[1]} \rangle \langle h_{(2)} , 1^{[1]} \rangle  \\ 
& = & (h_{(1)}k \cdot a)(h_{(2)} \cdot \um ),
\eqnast
where we used (PRHCA3) in the third equality. On the other hand, if one applies (PRHCA4) at the thrid equality in stead and continues by a similar calculation, then one obtains
\[
h\cdot (k\cdot a) =(h_{(1)}\cdot \um ) (h_{(2)}k\cdot a).
\]
Therefore, the partial action of $H$ on $A$ is symmetric.
\end{proof}

Let us use the same notation as in the statement of \prref{coactiontoaction}. Take any $a\in A$ and write $a^{[0]}\ot a^{[1]}=\sum_{i=1}^n a_i\ot x_i$ for certain $a_i\in A$ and $x_i\in K$. Then for all $h\in H$, we find that $a\cdot h=\sum_{i=1}^na_i\bk{h_i,x_i} $. This leads us to the following definition.

\begin{defi}
Let $H$ be a Hopf algebra and $A$ be a left partial $H$-module algebra. We say that the partial action of $H$ on $A$ is {\em rational} if for every $a\in A$ there exists $n=n(a) \in \mathbb{N}$ and a finite set $\{ a_i , \varphi^i \}_{i=1}^n$, with $a_i \in A$ and $\varphi^i \in H^*$ such that, for every $h\in H$ we have 
\[
h\cdot a =\sum_{i=1}^n \varphi^i (h) a_i .
\] 
The full subcategory of ${_H\ParAct}$ consisting of all rational left partial actions is denoted by ${_H\ParAct^r}$.
\end{defi}

Recall that a $k$-module $M$ is said to be locally projective (in the sense of Zimmermann-Huisgen) if for all $m\in M$, there exists a finite dual base $\{(e_i,f_i)\}_{i=1,\ldots,n}\in M\ot M^*$ i.e.\ $m=\sum_{i=1}^ne_if_i(m)$. 
Moreover the fact that $M$ is locally projective over $k$ and a submodule $D\subset M^*$ is dense with respect to the finite topology is equivalent with $M$ satisfying the $D$-relative $\alpha$-condition, which states that for any $k$-module $N$ the canonical map
$$\alpha_{N,D}:N\ot M\to {\Hom}(D,N),\quad \alpha_{N,D}(n\ot m)(d)=d(m)n$$
is injective, for details see e.g. \cite{Ver:locunits}.

\begin{teo}  \thlabel{actiontocoaction}
Consider a non-degenerate dual pairing of Hopf algebras $\bk{-,-}:H\ot K\to k$ where $K$ is locally projective over $k$ and let $A$ be a rational symmetric left partial $H$-module algebra. 
Then $A$ can be endowed with the structure of a symmetric right partial $K$-comodule algebra such that $\Phi(A)$ is the initial left partial $H$-module algebra. 
This construction, to together with \prref{coactiontoaction}, yields an isomorphism of categories
$${_H\ParAct^r}\cong \ParCoAct_K.$$
\end{teo}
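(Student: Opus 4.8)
The plan is to define the coaction $\prho\colon A\to A\ot K$ dual to the given action through the pairing, and then to transport each partial–action axiom into the corresponding partial–coaction axiom by pairing against $H$, the injectivity needed to pass from paired identities to genuine tensor identities being supplied by the $\alpha$-condition. First I would record what the hypotheses give. Since the pairing is non-degenerate, $\phi\colon H\to K^*$ is injective with dense image and $\psi\colon K\to H^*$ is an injective algebra map; since moreover $K$ is locally projective, $K$ satisfies the $\phi(H)$-relative $\alpha$-condition, so that for every $k$-module $N$ the map $\alpha_{N}\colon N\ot K\to\Hom(\phi(H),N)$, $\alpha_N(n\ot x)(\phi(h))=\bk{h,x}n$, is injective. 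Identifying $\phi(H)\cong H$ through injectivity of $\phi$, the image of $\alpha_N$ consists exactly of the maps $h\mapsto\sum_i\bk{h,x_i}n_i$ with $x_i\in K$ and $n_i\in N$.

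The candidate coaction is characterised by $h\cdot a=a^{[0]}\bk{h,a^{[1]}}$ for all $h\in H$; that is, $\alpha_A(\prho(a))$ is the map $h\mapsto h\cdot a$, and by injectivity of $\alpha_A$ such a $\prho(a)$ is unique once it exists. Its existence is the main obstacle. Rationality guarantees that $h\mapsto h\cdot a$ has finite-dimensional image and hence lies in $A\ot H^*$; what must be shown is that it lies in $A\ot\psi(K)=\mathrm{Im}(\alpha_A)$, equivalently that the coefficient functionals of the partial action belong to $\psi(K)$. This is precisely where local projectivity of $K$ and non-degeneracy are indispensable: over a genuine $H$-module this is the content of the known isomorphism between $K$-comodules and rational $H$-modules recalled in \seref{preliminaries}, but here $A$ carries only a \emph{partial} action and that correspondence cannot be quoted verbatim. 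I would instead control the relevant finite-dimensional data by hand: axiom (PLA3') gives $h\cdot(k\cdot a)=(h_{(1)}k\cdot a)(h_{(2)}\cdot\um)$, so all iterated values $h\cdot(k\cdot a)$ stay inside the finite-dimensional product space spanned by $(H\cdot a)(H\cdot\um)$, and feeding these finite-dimensional spaces into the $\alpha$-condition produces the required $\prho(a)\in A\ot K$; injectivity of $\alpha_A$ then shows it is independent of the chosen rational presentation and linear in $a$.

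Granting $\prho$, the partial-comodule-algebra axioms follow by dualisation, each one reduced to an identity of functions of finitely many variables in $H$ and then lifted by injectivity of $\alpha_A$ (respectively of $\alpha_{A\ot A}$ and of the twofold version $A\ot K\ot K\hookrightarrow\Hom(\phi(H)\ot\phi(H),A)$). Concretely: (PRHCA2) is dual to (PLA1) because $\epsilon_K=\bk{1_H,-}$, giving $a^{[0]}\epsilon_K(a^{[1]})=1_H\cdot a=a$; (PRHCA1) is dual to (PLA2), since pairing $\prho(a)\prho(b)$ with $h$ yields $(h_{(1)}\cdot a)(h_{(2)}\cdot b)=h\cdot(ab)$; and pairing the two right-hand $K$-legs of (PRHCA3), respectively (PRHCA4), against $h$ and $k$ reproduces exactly the right-hand sides of (PLA3'), respectively (PLA3), so these two coaction axioms are the duals of the two symmetry axioms of the action. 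The multiplicative and counital compatibilities of the pairing, $\bk{h,xy}=\bk{h_{(1)},x}\bk{h_{(2)},y}$ and $\bk{hk,x}=\bk{h,x_{(1)}}\bk{k,x_{(2)}}$, are what turn Sweedler manipulations on one side into the other.

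Finally I would assemble the categorical statement. By construction $\Phi(\prho)$ returns the original action, which proves the first assertion; conversely, applying the construction to a coaction and invoking uniqueness (injectivity of $\alpha_A$) recovers that coaction, so the two assignments are mutually inverse on objects. The functor $\Phi$ of \prref{coactiontoaction} already lands in ${}_H\ParAct^r$, since from a coaction one has $h\cdot a=\sum_i a_i\bk{h,x_i}$ with coefficients $\psi(x_i)\in\psi(K)$ and finite-dimensional image; and a morphism $f$ of rational partial module algebras satisfies $f(a^{[0]})\bk{h,a^{[1]}}=f(h\cdot a)=h\cdot f(a)=f(a)^{[0]}\bk{h,f(a)^{[1]}}$, whence by injectivity of $\alpha_B$ it is a morphism of partial $K$-comodule algebras, and symmetrically for $\Phi$. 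Both constructions are therefore functorial and mutually inverse, yielding ${}_H\ParAct^r\cong\ParCoAct_K$. The decisive step throughout, and the one I expect to cost the most effort, is the existence of $\prho(a)$ in $A\ot K$ described above; everything else is pairing bookkeeping justified by the $\alpha$-condition.
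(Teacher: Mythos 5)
Your overall architecture coincides with the paper's: both define $\prho(a)$ as the unique preimage under $\alpha_{A,H}:A\ot K\to\Hom(H,A)$ of the map $\beta(a):h\mapsto h\cdot a$, both verify the coaction axioms by pairing against elements of $H$ and lifting through the injectivity supplied by local projectivity of $K$ and density of $\phi(H)$ in $K^*$, and both obtain the categorical isomorphism from uniqueness and functoriality. However, there is a genuine gap at the step you yourself call decisive: the existence of $\prho(a)\in A\ot K$. Your proposed mechanism --- confining the iterated values $h\cdot(k\cdot a)$ to the finite-dimensional span of $(H\cdot a)(H\cdot\um)$ via (PLA3') and then ``feeding these finite-dimensional spaces into the $\alpha$-condition'' --- cannot work, because the $\alpha$-condition is purely an injectivity statement: it determines $\prho(a)$ uniquely \emph{if} it exists, but it provides no criterion whatsoever for a map $H\to A$ to lie in the image of $\alpha_{A,H}$. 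Rationality already places $\beta(a)$ in $A\ot H^*$; the entire problem is to move the coefficient functionals from $H^*$ into $\psi(K)$, and no bookkeeping of orbits inside $A$, however finite-dimensional, can detect membership in $\psi(K)$.

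The ingredient you never invoke --- and which the paper uses at exactly this point --- is the \emph{left} half of non-degeneracy: injectivity of $\phi:H\to K^*$ is equivalent to density of $\psi(K)$ in $H^*$ with respect to the finite topology, and it is rationality \emph{combined with this density} that the paper cites to conclude $\mathrm{Im}\,\beta\subseteq\mathrm{Im}\,\alpha_{A,H}$. (Your opening paragraph lists injectivity of $\phi$ among the consequences of non-degeneracy, but its actual role here is this density of $\psi(K)$ in $H^*$, which your existence argument never uses; the dense image of $\phi$ that you do use only feeds the injectivity of $\alpha_{A,H}$, i.e.\ uniqueness.) Once the containment is in place, the rest of your proposal --- uniqueness and linearity of $\prho$, the axiom-by-axiom dualisation of (PLA1), (PLA2), (PLA3), (PLA3') into (PRHCA2), (PRHCA1), (PRHCA4), (PRHCA3), and the mutual inverseness of the two functors --- is sound and agrees with the paper. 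So the fix is local but essential: replace the appeal to the $\alpha$-condition in the existence step by an argument that the coefficient functionals of a rational partial action lie in the dense subspace $\psi(K)\subseteq H^*$; this is where left non-degeneracy must enter, as it does in the paper's proof.
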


\begin{proof} 
Consider the following diagram.
\[
\xymatrix{
A \ar@{.>}[rr] \ar[drr]_-\beta  && A\ot K \ar[d]^-{\alpha_{A,H}}\\
&& \Hom(H,A)
}
\]
where we define $\beta(a)(h)=h\cdot a$. Then, since $A$ is rational and $K$ is dense in $H^*$ by the non-degeneracy of the pairing, the image of $\beta$ is contained in the image of $\alpha_{A,H}$. Moreover, since $K$ is locally projective over $k$ and $H$ is dense in $K^*$, the map $\alpha_{A,H}$ is injective. Hence there exists a well-defined map $\prho:A\to A\ot K$ that renders the diagram commutative, i.e. 
\[
h\cdot a = (I\otimes \langle h, \underline{\; } \rangle )\prho (a).
\]
Similarly to \cite{AB}, where the proof was made considering $k$ a field, one can now show the stated isomorphism. Let us just check the duality between the symmetry of the coaction and the action.

We have to verify the axiom (PRHCA3), to this, consider $h,k\in H$ then
\beqnast
& \, & (I\otimes \langle h, \underline{\; }\rangle \otimes \langle k , \underline{\; } \rangle ) ((\prho \otimes I)\prho (a)) = (I\otimes \langle h, \underline{\; }\rangle ) \sum_{i=1}^n \prho (a_i )\langle k, x^i \rangle =(I\otimes \langle h, \underline{\; }\rangle ) \prho (k\cdot a) \\
& = & h\cdot (k\cdot a) =(h_{(1)}k \cdot a)(h_{(2)} \cdot \um ) = [(I\otimes \langle h_{(1)}k, \underline{\; }\rangle )\prho (a)][(I\otimes \langle h_{(2)}, \underline{\; }\rangle )\prho (\um )] \\
& = & [(I\otimes \langle h_{(1)}, \underline{\; } \rangle  \otimes \langle k, \underline{\; }\rangle ) (I\otimes \Delta ) \prho (a)][((I\otimes \langle h_{(2)}, \underline{\; }\rangle )\prho (\um )) \otimes 1_H ] \\
& = & (I\otimes \langle h, \underline{\; }\rangle \otimes \langle k , \underline{\; } \rangle ) (((I\otimes \Delta )\prho (a))(\prho (\um )\otimes 1_H )) .
\eqnast
By the nondegeneracy of the pairing, one can conclude that 
\[
(\prho \otimes I)\prho (a)=((I\otimes \Delta )\prho (a))(\prho (\um )\otimes 1_H ) .
\]
\end{proof}

The following result can implicitly be found in \cite{caen06}, hence we omit an explicit proof and give only the structure maps. 

\begin{lemma} \lelabel{splitcoring}
Let $K$ ba a Hopf algebra and $A$ be a right partial $K$ comodule algebra, then the reduced tensor product $A\underline{\otimes} K =(A\otimes K)\prho (1_A )$ has a structure of an $A$-coring with bimodule structure, comultiplication and counit given by
\begin{eqnarray*}
b\cdot (a1^{[0]}\ot x1^{[0]})\cdot b'&=& bab'^{[0]}\ot x b'^{[1]};\\
\widetilde{\Delta} (a1^{[0]} \otimes x1^{[1]} ) & = & a 1^{[0]} \otimes x_{(1)} 1^{[1]} \otimes_A 1^{[0']} \otimes x_{(2)}1^{[1']};\\
\widetilde{\epsilon} (a1^{[0]} \otimes x1^{[1]} ) & = & a\epsilon (x).
\end{eqnarray*}
Moreover, if there is a dual pairing $\bk{-,-}:H\ot K\to k$ between $K$ and a second Hopf algebra $H$, then there is an algebra isomorphism between the left dual ring ${}^* (A\underline{\otimes} K)$ and the smash product $(A^{op}\# H^{cop})^{op}$.
\end{lemma}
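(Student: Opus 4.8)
The plan is to establish the stated algebra isomorphism $^*(A\underline{\otimes} K)\cong (A^{op}\# H^{cop})^{op}$ by exploiting the pairing to convert the $A$-coring structure on $A\underline{\otimes} K$ into a smash-product structure. Recall that for any $A$-coring $\Cc$, the left dual $^*\Cc=\,_A\Hom(\Cc,A)$ carries a canonical ring structure with multiplication $(f*g)(c)=g(c_{(1)}\cdot f(c_{(2)}))$ (or the opposite convention, which I must fix carefully to match the ``op'' decorations) and unit $\widetilde\epsilon$. So the first step is to write down this convolution ring explicitly for $\Cc=A\underline{\otimes} K$, using the comultiplication $\widetilde\Delta$ and counit $\widetilde\epsilon$ from the lemma.

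Next I would construct the candidate isomorphism. The natural guess is to send a smash-product generator $h\# a$ (or $a\# h$, in whatever side-convention $(A^{op}\# H^{cop})^{op}$ dictates) to the $A$-linear functional on $A\underline{\otimes} K$ that evaluates the $K$-component against $h$ via the pairing: concretely $a1^{[0]}\otimes x1^{[1]}\mapsto a\,\langle h, x1^{[1]}\rangle\cdot(\text{something in }A)$, built from the formula $h\cdot a=\sum a^{[0]}\langle h,a^{[1]}\rangle$ of \prref{coactiontoaction}. Because $H$ is dense in $K^*$ and the pairing lets $H$ act through the dual partial action, every $A$-linear functional on $A\underline{\otimes} K$ should arise this way, giving bijectivity. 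I would first verify $A$-linearity of the image functionals against the bimodule structure $b\cdot(a1^{[0]}\otimes x1^{[1]})\cdot b'=bab'^{[0]}\otimes xb'^{[1]}$, then check that the map is well-defined on the reduced tensor product (i.e.\ compatible with multiplication by $\prho(1_A)$), which is where the idempotent $\prho(1_A)=1^{[0]}\otimes 1^{[1]}$ must be handled with care.

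The core computation is matching the two multiplications. On the coring side the product of two functionals unwinds through $\widetilde\Delta(a1^{[0]}\otimes x1^{[1]})=a1^{[0]}\otimes x_{(1)}1^{[1]}\otimes_A 1^{[0']}\otimes x_{(2)}1^{[1']}$, so a product of functionals coming from $h$ and $k$ will, after pairing, produce terms like $\langle h,\cdot\rangle\langle k,\cdot\rangle$ applied across the coproduct of $x$; the bimodule twist over $A$ feeds the $A$-valued output of the inner functional back through the coaction, reproducing exactly the partial action $h\cdot(-)$. The smash-product side multiplication is governed by \eqref{propsmash2}, $(a\# h)(b\# k)=a(h_{(1)}\cdot b)\# h_{(2)}k$, together with the cocommutative/opposite twists. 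I expect the main obstacle to be bookkeeping the three simultaneous ``op'' and ``cop'' decorations: one must track precisely how $A^{op}$, $H^{cop}$, and the outer $(-)^{op}$ reverse the orders so that the convolution product on $^*(A\underline{\otimes} K)$ lands on the correct side, and how the pairing's antipode-compatibility $\langle h,S_K(x)\rangle=\langle S_H(h),x\rangle$ intervenes.

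Once the multiplication and unit are matched, the remaining checks---that the map respects units (sending the smash unit to $\widetilde\epsilon$) and is bijective via the density and local-projectivity hypotheses implicit from the pairing---are routine, so I would state them briefly. Concretely, I would organize the argument as: (1) recall the dual-ring structure on $^*\Cc$; (2) define the map $\Theta:(A^{op}\# H^{cop})^{op}\to\,^*(A\underline{\otimes} K)$ on generators and verify it is well-defined and $A$-linear-valued; (3) check unit preservation; (4) perform the central multiplicativity computation using $\widetilde\Delta$, the pairing, and \eqref{propsmash2}; and (5) construct the inverse using the dual partial action to conclude it is an isomorphism.
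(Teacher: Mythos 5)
The paper offers no proof for you to be compared against: it explicitly omits one, deferring the whole lemma to \cite{caen06}, so your attempt has to stand on its own. Its skeleton is right for the duality claim. With the left dual multiplication $(f\ast g)(c)=g\bigl(c_{(1)}\cdot f(c_{(2)})\bigr)$, any $f\in{}^*(A\underline{\otimes}K)$ is determined by $\kappa_f(x)=f(1^{[0]}\ot x1^{[1]})$, subject to the compatibility $\kappa_f(x)=1^{[0]}\kappa_f(x1^{[1]})$; the right $A$-action does feed the inner functional's output through $\prho$, giving $\kappa_{f\ast g}(x)=\kappa_f(x_{(2)})^{[0]}\,\kappa_g\bigl(x_{(1)}\kappa_f(x_{(2)})^{[1]}\bigr)$, and this is matched by the map $\Theta(a\# h)$ determined by $\kappa(x)=1^{[0]}a\bk{h,x1^{[1]}}$ under exactly the stated op/cop twists, with the unit going to $\widetilde{\epsilon}$ by (PRHCA2). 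Note, however, that your proposal silently assumes the first half of the lemma: that $\widetilde{\Delta}$ is well defined over $\ot_A$, right $A$-linear and coassociative (this needs (PRHCA3)) is part of the statement and must be proved or cited.

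The genuine gap is your step (5), bijectivity. Density of the image of $H$ in $K^*$ in the finite topology does \emph{not} give surjectivity --- dense is precisely not equal. Under the identification above, ${}^*(A\underline{\otimes}K)\cong\{\kappa\in\Hom_k(K,A)\mid \kappa(x)=1^{[0]}\kappa(x1^{[1]})\ \forall x\}$, a Koppinen-type smash product, whereas every functional in the image of $\Theta$ has image contained in a finitely generated subspace of $A$ (it has the form $x\mapsto\sum_j e_j a\bk{h,xy_j}$, where $\prho(1_A)=\sum_j e_j\ot y_j$). Already for $A=k$ with the trivial (global) coaction one gets ${}^*(A\underline{\otimes}K)=K^*$, while $\Theta$ lands in the finite dual $K^\circ$ (any Hopf pairing factors through $K^\circ$, since $\bk{h,xy}=\bk{h_{(1)},x}\bk{h_{(2)},y}$ is a finite sum), and $K^\circ\subsetneq K^*$ whenever $K$ is infinite dimensional; moreover, with only a dual pairing as hypothesized (no non-degeneracy), injectivity fails as well. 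So no inverse can be ``constructed from the dual partial action'': the isomorphism is only attainable when $K$ is finite dimensional and the pairing is non-degenerate (forcing $H\cong K^*$), or after corestricting to a rational part / replacing the target ${}^*(A\underline{\otimes}K)$ by the Koppinen smash described above. Your plan cannot be completed as written without one of these repairs.
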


\subsection{Partial coactions of commutative Hopf algebras}

In this section we restrict to partial actions of commutative Hopf algebras on commutative algebras. Remark that in this case, all partial actions are automatically symmetric.

\subsubsection{Partial coactions from algebraic geometry}\selabel{geometry}

In this section we extend the well-known correspondence between actions of affine algebraic groups on affine algebraic sets and comodule algebras over affine Hopf algebras to the partial setting. 
Throughout this subsection,
$k$ will denote an algebraically closed field and $\mathbb{A}^n$ the $n$-dimensional  affine space over $k$. By an affine algebraic set we mean a subset of point in $\mathbb{A}^n$ which are zeros of a finite set of polynomials $p_1,\ldots,p_k$ in $k[x_1 ,\ldots ,x_n ]$ (i.e. these are not-necessarily irreducible affine algebraic varieties).

\begin{defi} Let $G$ be an affine algebraic group and $M$ an affine algebraic set. A partial action $(\{ M_g \}_{g\in G} , \{ \alpha_g \}_{g\in G} )$ of $G$ on the underlying set $M$ is said to be {\em algebraic} if 
\begin{enumerate}
\item for all $g\in G$, $M_g$ and its complement $M'_g=M\setminus M_g$ are affine algebraic sets;
\item for all $g\in G$, the maps $\alpha_g :M_{g^{-1}}\rightarrow M_g$ are polynomial;
\item the set of ``compatible couples'' $G\bullet M:=\{(g,x)\in G\times M ~|~x\in M_{g^{-1}}\}\subset G\times M$ is an algebraic set and the map $\alpha:G\bullet M\to M, \alpha(g,x)=\alpha_g(x)$ is polynomial.
\end{enumerate}
\end{defi}

\begin{remark} If a partial action of an affine algebraic group $G$ on an affine algebraic set $M$ is algebraic, then each domain $M_g$ is a disjoint union of a finite number of connected components.
\end{remark}

\begin{remark} In what follows, we write the set of "compatible couples" $G\bullet M$ in an equivalent way as
\[
G\bullet M =\{ (x,g)\in M\times G \; | \; x\in M_g \} .
\]
This is equivalent because one can view the set of "compatible couples" as the set of triples 
\[
A= \{ (\alpha_g (x) , g, x)\in M\times G \times M \; | \; x\in M_{g^{-1}} \},
\]
in which there is an obvious redundancy. As we shall see later, this set has a structure of a groupoid \cite{KL}.
\end{remark}

\begin{exmp} Take the algebraic set $M$ which is the union of two horizontal circles of radius 1, one centered at $(0,0,1)$ and the other at $(0,0,0)$. This is an affine algebraic set, whose algebra of coordinate functions is given by
\[
A=k[x,y,z]/\langle x^2+y^2-1 , z^2 -z \rangle
\]

There is a partial action of the affine algebraic group $G= \mathbb{S}^1 \rtimes \mathbb{Z}_2$. Geometrically, the group $G$ can be thought as the union of two disjoint circles in the three dimensional affine space: The circle $G_1$, whose elements are of the form $g=(x_1 ,x_2 ,1 )$ and $G_2$ whose elements are of the form $g=(x_1 ,x_2 ,-1)$ and the group operation is given by
\[
(x_1, x_2 , \lambda )(y_1 , y_2 , \mu) =(x_1 y_1 -\lambda x_2 y_2 , y_1 x_2 +\lambda x_1 y_2, \lambda \mu ) ,
\]
where $\lambda$ and $\mu$ are equal to $+1$ or $-1$. The  algebra of coordinate functions for the group $G$ can be written as
\[
H=k[x_1 ,x_2 ,x_3 ]/\langle x_1^2 +x_2^2 -1 , x_3^2 -1 \rangle.
\]
 For $g\in G_1$, we have $M_g =M$ and the action is given by 
\[
\alpha_{(x_1 , x_2  , 1)} (x,y,z) =(xx_1 -yx_2 , xx_2 +yx_1 , z) \qquad \mbox{ for  }\, z=0,1 .
\]
For $g\in G_2$, the domain $M_g$ is only the unit circle centered at $(0,0,0)$, and the action is given by
\[
\alpha_{(x_1 , x_2 , -1)} (x,y,z) =(-xx_1 -yx_2 , -xx_2 +yx_1 , -z) \qquad 
\mbox{ for  }\, z=0.
\]
This partial action is clearly algebraic.
\end{exmp}

Algebraic partial group actions give rise to partial coactions of commutative Hopf algebras. Let us first state the following useful lemma.

\begin{lemma}\lelabel{decomp}
Let $M$ be an affine algebraic set and $A=\Oo(M)$ its coordinate algebra. Then $M$ is the disjoint union of two algebraic subsets, $M=N\sqcup N'$ if and only if the unit of $A$ can be decomposed in a sum of two orthogonal idempotents $1=e+f$. In this case $\Oo(N)=e\Oo(M)$ is a unital ideal in $\Oo(M)$ and $e$ can be viewed as the characteristic function on $N$. 
\end{lemma}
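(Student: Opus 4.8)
The plan is to prove both implications by combining the Nullstellensatz with the fact that $A=\Oo(M)=k[x_1,\ldots,x_n]/I(M)$ is a \emph{reduced} $k$-algebra, since $I(M)$ is a radical ideal and $k$ is algebraically closed. Throughout, I identify an element of $A$ with the regular (polynomial) function it induces on $M$, so that ``vanishing on all of $M$'' is equivalent to ``being zero in $A$''.

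For the implication ($\Rightarrow$), suppose $M=N\sqcup N'$ with $N,N'$ disjoint algebraic subsets. First I would observe that the common zero locus of $I(N)+I(N')$ equals $N\cap N'=\emptyset$, so by the Nullstellensatz $I(N)+I(N')=A$, i.e.\ the two ideals are comaximal. Hence I can write $1=e+f$ with $e\in I(N')$ and $f\in I(N)$. Since $f$ vanishes on $N$, the relation $1=e+f$ gives $e=1$ on $N$, while $e\in I(N')$ gives $e=0$ on $N'$; thus $e$ is precisely the characteristic function of $N$ (and $f$ that of $N'$). To upgrade this to genuine idempotency, I would note that $e^2-e=-ef$ vanishes on all of $M=N\cup N'$ (on $N$ because $f=0$ there, on $N'$ because $e=0$ there), so reducedness forces $ef=0$ and hence $e^2=e$; symmetrically $f^2=f$, and $ef=0$ gives orthogonality, with $e+f=1$.

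For the implication ($\Leftarrow$), given orthogonal idempotents with $e+f=1$, I would set $f=1-e$ and use that idempotency forces $e(x)^2=e(x)$ at every point $x\in M$, so $e(x)\in\{0,1\}$. Then $N=\{x\in M : e(x)=1\}$ and $N'=\{x\in M : e(x)=0\}$ are the zero loci of $1-e$ and of $e$ respectively, hence algebraic subsets; they are manifestly disjoint and cover $M$. For the remaining claims I would show $I(N)=fA$: the inclusion $fA\subseteq I(N)$ is clear since $f$ vanishes on $N$, and conversely if $a$ vanishes on $N$ then $ea$ vanishes on all of $M$ (it equals $a$ on $N$ and $0$ on $N'$), so $ea=0$ by reducedness and $a=fa\in fA$. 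Consequently $\Oo(N)=A/I(N)=A/fA\cong eA=e\Oo(M)$, which is an ideal carrying $e$ as its own unit, i.e.\ a unital ideal, with $e$ restricting to the constant function $1$ on $N$.

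The crux, and the step I expect to be the main obstacle, is the passage in ($\Rightarrow$) from a purely topological/geometric splitting to an algebraic idempotent. This is exactly where both hypotheses are essential: the assumption that $N$ and $N'$ are algebraic and disjoint feeds the Nullstellensatz to yield the comaximality $I(N)+I(N')=A$ (producing a candidate $e$), while reducedness of $A$ is what promotes ``$e^2-e$ vanishes on $M$'' to the genuine identity $e^2=e$ in $A$, and likewise the orthogonality $ef=0$. Without reducedness one would only obtain an idempotent modulo the nilradical, so isolating this point is the heart of the argument.
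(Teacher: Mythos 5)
Your proof is correct and follows essentially the same route as the paper's: disjointness of $N$ and $N'$ plus the Nullstellensatz gives comaximality of the vanishing ideals, and the resulting decomposition $1=e+f$ produces the orthogonal idempotents. The only cosmetic difference is that the paper verifies $ef=0$ ideal-theoretically (via $uv\in JJ'\subseteq I(N)\cap I(N')=I(M)$) where you argue pointwise and invoke reducedness, and you additionally spell out the converse and the identification $\Oo(N)=e\Oo(M)$, which the paper dismisses as ``easier'' and leaves to the reader.
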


\begin{proof}
Suppose that $M$ is an algebraic subset of the affine space $\mathbb{A}^n$ and that $M=N\sqcup N'$. Then the coordinate algebras $A=\mathcal{O}(M)$, $B=\mathcal{O}(N)$ and  $B'=\mathcal{O}(N')$ are quotients of the polynomial algebra $k[x_1 ,\ldots ,x_n ]$. Denote the respective ideals by $I$, $J$ and $J'$. Since $N\cap N'=\emptyset$, we have that $J+J'=k[x_1,\ldots,x_n]$, i.e. $J$ and $J'$ are comaximal. Take $u \in J$ and $v \in J'$ such that $u +v =1$. Since also $M=N\cup N'$, we have that $I=J\cap J'=JJ'$ (the last equality follows by the comaximality).
Denote by $e$ and $f$, respectively the classes of $v$ and $u$ modulo $JJ'$. Then one can easily check that $e$ and $f$ are orthogonal idempotents in $\Oo(M)$. The converse is easier.
\end{proof}

\begin{prop} Let $G$ be an affine algebraic group and $M$ be an affine algebraic set. Then each algebraic partial action of $G$ on $M$ defines a (symmetric) partial coaction of the commutative Hopf algebra $H=\mathcal{O}(G)$ on the commutative algebra $A=\mathcal{O}(M)$.
\end{prop}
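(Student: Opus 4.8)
The plan is to dualize the algebraic partial action geometrically, exactly as a global algebraic action $M\times G\to M$ dualizes to a comodule structure on coordinate algebras by pullback along the action map. Concretely, writing $A=\mathcal{O}(M)$, $H=\mathcal{O}(G)$ and identifying $A\otimes H=\mathcal{O}(M\times G)$, I want to define $\prho(a)$ as the regular function on $M\times G$ given by $\prho(a)(x,g)=a(\alpha_{g^{-1}}(x))$ whenever $x\in M_g$ and $0$ otherwise. The only genuinely new feature compared to the global case is that the ``action'' $\alpha$ is defined merely on the compatible couples $G\bullet M=\{(x,g)\in M\times G\mid x\in M_g\}$, so the pullback of a function a priori lives on $G\bullet M$ rather than on all of $M\times G$; to land back in $A\otimes H$ I must extend by zero, which is only legitimate (i.e.\ regular) if $G\bullet M$ is clopen.

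The first and crucial step is therefore to prove that $G\bullet M$ is clopen in $M\times G$, so that its characteristic function is a regular idempotent $e\in\mathcal{O}(M\times G)=A\otimes H$. Closedness is hypothesis (3). For openness I would use the preceding Remark to write $M=\bigsqcup_j C_j$ as a finite disjoint union of connected components and, via \leref{decomp} applied to $M\times G$, reduce the claim to showing that each $G^{(j)}:=\{g\in G\mid C_j\subseteq M_g\}$ is clopen in $G$; closedness of $G^{(j)}$ is immediate by intersecting $G\bullet M$ with the clopen $C_j\times G$. Since each connected component $D$ of $G$ is irreducible, it then suffices to show that the ``domain type'' $g\mapsto\{\,j\mid C_j\subseteq M_g\}$ is locally constant on $G$, which is where the polynomiality of $\alpha$ on $G\bullet M$ and the finiteness of the components must enter. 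This is the one point requiring a real argument; everything afterward is bookkeeping. Granting it, \leref{decomp} yields $1_{A\otimes H}=e+f$ with $e$ the characteristic idempotent of $G\bullet M$ and an algebra isomorphism $\mathcal{O}(G\bullet M)\cong e(A\otimes H)$.

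With $e$ in hand, I define the structure map as follows. As inversion on $G$ is polynomial and $\alpha$ is polynomial on $G\bullet M$, the assignment $\beta\colon G\bullet M\to M$, $\beta(x,g)=\alpha_{g^{-1}}(x)$, is a morphism of algebraic sets (for $(x,g)\in G\bullet M$ one has $x\in M_g$, so $\alpha_{g^{-1}}(x)\in M_{g^{-1}}$ is defined). Pulling back and extending by zero, I set $\prho:=\iota\circ\beta^{*}\colon A\to\mathcal{O}(G\bullet M)\cong e(A\otimes H)\subseteq A\otimes H$, so that $\prho(a)(x,g)=a(\alpha_{g^{-1}}(x))$ on $G\bullet M$ and $0$ elsewhere, and in particular $\prho(\um)=e$.

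Finally I verify the axioms by pointwise computation in the commutative algebras $\mathcal{O}(M\times G)$ and $\mathcal{O}(M\times G\times G)$. Axiom (PRHCA1) holds because $\beta^{*}$ is a unital algebra map into $\mathcal{O}(G\bullet M)$ and multiplication by the idempotent $e$ is an algebra map; (PRHCA2) is the identity $(I\otimes\epsilon)\prho(a)(x)=\prho(a)(x,e_G)=a(\alpha_{e_G}(x))=a(x)$, using $M_{e_G}=M$ and $\alpha_{e_G}=\mathrm{Id}$. For (PRHCA3) I evaluate both sides at $(x,g,h)$: the left side equals $[x\in M_g]\,[\alpha_{g^{-1}}(x)\in M_h]\,a(\alpha_{h^{-1}}\alpha_{g^{-1}}(x))$, and the domain identity $\alpha_{g^{-1}}(M_g\cap M_{gh})=M_{g^{-1}}\cap M_h$ (from $\alpha_g(X_{g^{-1}}\cap X_h)=X_g\cap X_{gh}$) together with the cocycle relation $\alpha_{h^{-1}}\alpha_{g^{-1}}=\alpha_{(gh)^{-1}}$ on that domain turns this into $[x\in M_g]\,[x\in M_{gh}]\,a(\alpha_{(gh)^{-1}}(x))$; the right side gives the same, since $(I\otimes\Delta)\prho(a)(x,g,h)=\prho(a)(x,gh)=[x\in M_{gh}]\,a(\alpha_{(gh)^{-1}}(x))$ and $(\prho(\um)\otimes1_H)(x,g,h)=[x\in M_g]$. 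Axiom (PRHCA4) then needs no separate work: $A\otimes H$ is commutative, so its right-hand side coincides with that of (PRHCA3), which also reproves the remark that partial coactions are automatically symmetric in the commutative setting. I expect the clopen-ness in the second step to be the only real obstacle, all the remaining identities being faithful transcriptions of the defining axioms of the algebraic partial group action.
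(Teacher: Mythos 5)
Your construction of the coaction itself is the paper's: pull functions back along $(x,g)\mapsto \alpha_{g^{-1}}(x)$, extend by zero using the idempotent $e$ with $\mathcal{O}(G\bullet M)=e\,\mathcal{O}(M\times G)$ furnished by \leref{decomp}, and verify (PRHCA1)--(PRHCA3) by pointwise evaluation, with symmetry coming for free from commutativity; all of that matches the paper. The genuine gap is the step you yourself flag as crucial and then skip (``Granting it\ldots''): the openness of $G\bullet M$ in $M\times G$, equivalently the algebraicity of the complement $G\circ M=\{(x,g)\in M\times G \mid x\notin M_g\}$, without which \leref{decomp} does not apply and extension by zero is not regular.

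Worse than being merely unproven, the argument you envision --- deriving local constancy of the ``domain type'' from hypotheses (1)--(3) and the polynomiality of $\alpha$ --- cannot exist, because openness does not follow from those hypotheses. Take $M=\{p,q\}$ two points, $G=\mathbb{A}^1$ the additive group, $M_0=M$, $M_g=\{p\}$ for all $g\neq 0$, and every $\alpha_g$ the identity on its domain. This satisfies (1), (2) and (3): each $M_g$ and its complement are algebraic, $G\bullet M=(\{p\}\times G)\cup\{(q,0)\}$ is Zariski closed, and $\alpha$ is regular on it since its two pieces are disjoint closed subsets. Nevertheless $G\bullet M$ is not open: your set $G^{(j)}$ for the component $\{q\}$ is $\{0\}$, which is closed but not open, so the domain type is not locally constant; correspondingly $\prho(f)$, which on the line $\{q\}\times G$ equals $f(q)$ at $g=0$ and $0$ elsewhere, is not a regular function when $f(q)\neq 0$. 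The paper does not prove this step either: it asserts that $M\times G=(G\bullet M)\sqcup(G\circ M)$ is ``clearly'' a disjoint union of algebraic sets and feeds that into \leref{decomp}, i.e.\ it implicitly treats the algebraicity of $G\circ M$ (the global analogue of condition (1)) as part of what ``algebraic partial action'' means. Under that reading your proof becomes correct with your second step deleted; under the literal reading of the definition, no argument can close the gap, since the example above contradicts the proposition itself.
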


\begin{proof} 
Denote $G\circ M=\{(x,g)\in M\times G~|~x\in M'_{g}\}$. Then we clearly have that the algebraic set $M\times G$ can be written as a disjoint union $M\times G=(G\bullet M)\sqcup (G\circ M)$. By \leref{decomp}, we then now that there exists an idempotent $e\in \Oo(M\times G)\cong \Oo(M)\ot \Oo(G)$ such that $\Oo(G\bullet M)=e\Oo(M\times G)$. The algebraic partial action provides us with a polynomial map $\alpha:G\bullet M\to M$, which induces an algebra morphism $\rho:\Oo(M)\to \Oo(G\bullet M)$. We now define the partial coaction of the Hopf algebra $H=\Oo(G)$ on the algebra $A=\Oo(M)$ as the morphism
\[
\ol\rho:\Oo(M)\to \Oo(M)\ot \Oo(G),\ \ol\rho(f)=e\rho(f) ,
\]
which will be denoted in a Sweedler like notation as $\overline{\rho} (f)=f^{[0]}\ot f^{[1]}$.
As $e$ can be understood as the characteristic function on $G\bullet M$, $\ol\rho$ can be rewritten as
\begin{equation}
\label{coactionfromalgebraicaction}
\prho (f) (p,g) =\left\{ \begin{array}{lcr} f(\alpha_{g^{-1}}(p)) & \mbox{ if } & p\in M_g \\
0 & \,  & \mbox{otherwise} 
\end{array} \right.
\end{equation}
It is easy to see that $\prho$ above defined is multiplicative, i.e.\ it satisfies (PRHCA1). Now, consider $f\in \mathcal{O}(M)$ and $p\in M$ then
\[
\big( (I\otimes \epsilon )\prho (f)\big) (p)  =  \prho (f) (p,e_G) =f(\alpha_{e_G} (p))=f(p), 
\]
since  $p\in M_{e_G} =M$.
Therefore, $\prho$ satisfies (PRHCA2). Finally, for $f\in \mathcal{O}(M)$, $p\in M$ and $g,h\in G$,
\beqnast
(\prho \otimes I)\prho (f)(p,g,h) & = & (\prho \otimes I)(f^{[0]}\otimes f^{[1]})(p,g,h) \\
& = & \left\{ \begin{array}{lcl} f^{[0]}(\alpha_{g^{-1}}(p))f^{[1]}(h) && \mbox{if }    p\in M_g \\
0 & \, & \mbox{otherwise} \end{array} \right. \\
& = & \left\{ \begin{array}{lcl} f(\alpha_{h^{-1}} (\alpha_{g^{-1}}(p))) && \mbox{if }    p\in M_g , \mbox{ and } \alpha_{g^{-1}}(p))\in M_h \\
0 & \, & \mbox{otherwise} \end{array} \right. \\
& = & \left\{ \begin{array}{lcl} f(\alpha_{h^{-1}} (\alpha_{g^{-1}}(p))) && \mbox{if }    p\in \alpha_g (M_{g^{-1}} \cap M_h ) \\
0 & \, & \mbox{otherwise} \end{array} \right. \\
& = & \left\{ \begin{array}{lcl} f(\alpha_{(gh)^{-1}} (p)) && \mbox{if }    p\in M_g \cap M_{gh}  \\
0 & \, & \mbox{otherwise} \end{array} \right. 
\eqnast
On the other hand,
\beqnast
[(I\otimes \Delta )\prho (f)(\prho (1_{\mathcal{O}(M)})\otimes 1_{\mathcal{O}(G)})](p,g,h)  & = &  (I\otimes \Delta )\prho (f)(p,g,h) (\prho (1_{\mathcal{O}(M)})\otimes 1_{\mathcal{O}(G)})(p,g,h) \\
& = & \prho (f) (p,gh)  \prho (1_{\mathcal{O}(M)} )(p,g) \\
& = & \left\{ \begin{array}{lcl} \prho (f) (p,gh)  && \mbox{if }    p\in M_g  \\
0 & \, & \mbox{otherwise} \end{array} \right. \\
& = & \left\{ \begin{array}{lcl}  f(\alpha_{(gh)^{-1}} (p)) && \mbox{if }    p\in M_g \cap M_{gh} \\
0 & \, & \mbox{otherwise} \end{array} \right. 
\eqnast
Therefore, $\prho$ satisfies (PRHCA3), which proves that this is a partial coaction of the commutative Hopf algebra $H=\mathcal{O}(G)$ over the commutative algebra $A=\mathcal{O}(M)$. 
\end{proof}

Conversely, given a partial coaction of a commutative Hopf algebra on a commutative algebra, one can construct a partial action of the affine algebraic group $\Hom_{\underline{Alg}} (H,k)$ on the algebraic set $\mbox{Hom}_{\underline{Alg}} (A,k)$. 

\begin{prop} Let $H$ be a commutative Hopf algebra and $A$ be a commutative right partial $H$ comodule algebra. Then there is a (symmetric) partial action of the affine algebraic group $G=\Hom_{\underline{Alg}} (H,k)$ on the affine algebraic set  $M=\Hom_{\underline{Alg}} (A,k)$.
\end{prop}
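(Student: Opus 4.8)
The plan is to run the construction of the preceding Proposition in reverse. Recall that $G=\Hom_{\ul{\Alg}}(H,k)$ is a group under convolution, with product $(gg')(h)=g(h_{(1)})g'(h_{(2)})$, unit $\epsilon_H$ and inverse $g^{-1}=g\circ S$; all of these are algebra maps because $H$ is commutative, so $G$ is genuinely the group of $k$-points of $H$. For $p\in M$ and $g\in G$, commutativity of $A$ and $H$ makes $p\ot g:A\ot H\to k$ an algebra morphism, hence so is the composite $p\cdot g:=(p\ot g)\circ\prho:A\to k$. This composite is unital---and thus a genuine point of $M$---precisely when $(p\ot g)(\prho(\um))=1$; since $\prho(\um)$ is idempotent and $p\ot g$ is an algebra map to the field $k$, this value always lies in $\{0,1\}$. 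I therefore set
\[
M_g=\{p\in M\mid (p\ot g)(\prho(\um))=1\},\qquad \alpha_g:M_{g^{-1}}\to M,\quad \alpha_g(p)=p\cdot g^{-1}.
\]

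The computational engine is one composition formula extracted from (PRHCA3). Applying the algebra map $p\ot g\ot g'$ to both sides of $(\prho\ot I)\prho(a)=[(I\ot\Delta)\prho(a)](\prho(\um)\ot 1_H)$ and using (PRHCA2) gives, for all $a\in A$, that $\big((p\cdot g)\cdot g'\big)(a)=\big(p\cdot(gg')\big)(a)\,\chi_g(p)$, where $\chi_g(p):=(p\ot g)(\prho(\um))\in\{0,1\}$ is the indicator of $M_g$. Independently, (PRHCA2) yields $p\cdot\epsilon_H=p$ for every $p$. From these two facts one reads off at once that $M_e=M$ and $\alpha_e=\mathrm{Id}$, and, taking $g'=g^{-1}$ (so $gg^{-1}=\epsilon_H$), that $\alpha_{g^{-1}}$ is a two-sided inverse to $\alpha_g$; in particular $\alpha_g$ restricts to a bijection $M_{g^{-1}}\to M_g$.

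The core of the argument is the compatibility axiom $\alpha_g(M_{g^{-1}}\cap M_h)=M_g\cap M_{gh}$ together with the cocycle identity, and both are consequences of the membership criterion that the composition formula produces: for $p\in M_{g^{-1}}$, evaluating the formula for $(g^{-1},h)$ at $\um$ shows $\alpha_g(p)\in M_h\iff p\in M_{g^{-1}h}$. Replacing $h$ by $gh$ turns this into $\alpha_g(p)\in M_{gh}\iff p\in M_h$; combined with the fact that $\alpha_g(p)\in M_g$ always, and with $\alpha_{g^{-1}}$ inverting $\alpha_g$, this gives both inclusions of the compatibility axiom. For the cocycle condition, for $p\in M_{h^{-1}}\cap M_{(gh)^{-1}}$ the same criterion shows $\alpha_h(p)\in M_{g^{-1}}$, so $\alpha_g(\alpha_h(p))$ is defined, and the composition formula computes $\alpha_g(\alpha_h(p))=(p\cdot h^{-1})\cdot g^{-1}=p\cdot(h^{-1}g^{-1})=p\cdot(gh)^{-1}=\alpha_{gh}(p)$, the indicator equalling $1$ since $p\in M_{h^{-1}}$.

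The obstacle I anticipate is entirely bookkeeping: faithfully carrying the idempotent $\prho(\um)$---equivalently the indicator $\chi_g$---through each composition so that domains of definition line up, and keeping the inverse and left/right conventions aligned with the previous Proposition so that $\alpha_g(p)=p\cdot g^{-1}$ reproduces $\prho(f)(p,g)=f(\alpha_{g^{-1}}(p))$. Symmetry of the partial action is automatic in this commutative setting. To see finally that this is a partial action on the affine algebraic set $M$, I would note that $M_g=\{p\mid p(f_g)=1\}$ with $f_g=(I\ot g)(\prho(\um))$ an idempotent of $A$, so $M_g$ and its complement are clopen algebraic subsets.
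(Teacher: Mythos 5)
Your proof is correct, but it is organized quite differently from the paper's. The paper first builds a partial action of $G$ on the \emph{algebra} $A$: it introduces the idempotents $1_g=1^{[0]}g(1^{[1]})$, the unital ideals $A_g=1_gA$ and the isomorphisms $\theta_g=(I\ot g)\circ\prho|_{A_{g^{-1}}}$, verifies each partial-action axiom for $(\{A_g\},\{\theta_g\})$ by long Sweedler computations using (PRHCA2)--(PRHCA3), and only afterwards dualizes, setting $M_g=\Hom_{\underline{Alg}}(A_g,k)$ and $\alpha_g(P)=P\circ\theta_{g^{-1}}$, with the set-level axioms left as ``easy to see''. You instead work directly with points: the translate $p\cdot g=(p\ot g)\circ\prho$, the indicator $\chi_g(p)=(p\ot g)(\prho(\um))\in\{0,1\}$, and the single composition formula $((p\cdot g)\cdot g')=(p\cdot(gg'))\,\chi_g(p)$ extracted from (PRHCA3) make all three axioms of a partial group action follow formally; moreover your $f_g=(I\ot g)\prho(\um)$ is exactly the paper's $1_g$, so your $M_g$ agrees with the paper's under the canonical identification $\Hom_{\underline{Alg}}(A_g,k)\cong\{p\in M\mid p(1_g)=1\}$ (your version has the small advantage of realizing $M_g$ literally as a subset of $M$, which is what a partial action on the set $M$ formally requires). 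What the paper's longer route buys is a result of independent interest---a partial action of $G$ on $A$ by unital ideals, the structure that connects to partial $kG$-actions---while your route is shorter and isolates the one identity doing all the work. Two harmless inaccuracies, not gaps: the group structure on $G$ (convolution, inverse $g\circ S$) and the multiplicativity of $p\ot g$ need only commutativity of the base field $k$, not of $H$ and $A$ (commutativity of $H$ and $A$ is what makes $G$ and $M$ affine algebraic objects rather than what makes these maps algebra morphisms); and, like the paper, you verify only condition (1) of the definition of an algebraic partial action ($M_g$ and its complement algebraic), not the polynomiality conditions (2) and (3)---the paper is equally brief there, so this matches its level of detail.
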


\begin{proof} Denoting $\prho (1_A )=1^{[0]}\otimes 1^{[1]}$, define, for each $g\in G$, the element
\[
1_g =1^{[0]}g(1^{[1]}) \in A .
\]
It is easy to see that $1_g$ is an idempotent,
\[
1_g 1_g =  1^{[0]}1^{[0']} g(1^{[1]})g(1^{[1']}) =1^{[0]}1^{[0']} g(1^{[1]}1^{[1']}) =1^{[0]}g(1^{[1]})=1_g .
\]
Define, for each $g\in G$, the ideals $A_g =1_gA$. A typical element in $A_g$ is of the form $a1^{[0]}g(1^{[1]})$, for $a\in A$. Note that, since $\prho (a) =\prho (a1_A)=\prho(a)\prho(1_A)$, the elements of the form $a^{[0]}g(a^{[1]})$ are also in the ideal $A_g$. Define also the linear maps $\theta_g :A_{g^{-1}}\rightarrow A$, by 
$\theta_g =(I\otimes g(\underline{\; }))\circ \prho \vert_{A_{g^{-1}}}$. The map $\theta_g$ is an algebra isomorphism between $A_{g^{-1}}$ and $A_g$. Indeed, take $a\in A_{g^{-1}}$, then we have $a=a1^{[0]}g^{-1}(1^{[1]})$ and we find
\beqnast
\theta_g (a) & = & (I\otimes g)\circ \prho (a1^{[0]}g^{-1}(1^{[1]})) =
a^{[0]}1^{[0][0]} g(a^{[1]}1^{[0][1]})g^{-1}(1^{[1]}) \\
& = & a^{[0]}1^{[0]}1^{[0']}g(a^{[1]}{1^{[1]}}_{(1)} 1^{[1']})g^{-1} ({1^{[1]}}_{(2)})
=a^{[0]}g(a^{[1]})1^{[0]}g({1^{[1]}}_{(1)})g^{-1}({1^{[1]}}_{(2)})\\
& = & a^{[0]}g(a^{[1]})1^{[0]}\epsilon (1^{[1]}) 
 =  a^{[0]}g(a^{[1]}) \in A_g .
\eqnast
Also, we have, for $a\in A_{g^{-1}}$,
\beqnast
\theta_{g^{-1}} \circ \theta_g (a) & = & \theta_{g^{-1}} (a^{[0]}g(a^{[1]}))
 =  a^{[0][0]}g^{-1} (a^{[0][1]}) g(a^{[1]}) \\
& = & a^{[0]} 1^{[0]} g^{-1} ({a^{[1]}}_{(1)}1^{[1]}) g({a^{[1]}}_{(2)})
 =  a^{[0]} g^{-1}({a^{[1]}}_{(1)}) g({a^{[1]}}_{(2)})1^{[0]}g^{-1}(1^{[1]})\\
& = & a^{[0]}\epsilon (a^{[1]}) 1^{[0]}g^{-1}(1^{[1]})  
 =  a 1^{[0]}g^{-1}(1^{[1]}) =a .
\eqnast
Then $\theta_{g^{-1}} \circ \theta_g =\mbox{Id}_{A_{g^{-1}}}$. Analogously, we have $ \theta_g \circ \theta_{g^{-1}}=\mbox{Id}_{A_{g}}$. Then $\theta_g$ is bijective. Finally, for $a,b\in A_{g^{-1}}$,
\[
\theta_g (ab)  =  (ab)^{[0]} g((ab)^{[1]})=a^{[0]}b^{[0]}g(a^{[1]}b^{[1]}) =a^{[0]}g(a^{[1]})b^{[0]}g(b^{[1]}) =\theta_g (a) \theta_g (b) .
\]
Therefore, $\theta_g$ is an algebra isomorphism.

The data $(\{ A_g \}_{g\in G} , \{ \theta_g \}_{g\in G} )$ defines a partial action of the group $G$ on the algebra $A$. Indeed, it is easy to see that $A_e =A$ and $\theta_e =\mbox{Id}_A$. Take now an element $y \in A_h \cap A_{g^{-1}}$, then 
\beqnast
\theta_{h^{-1}} (y) & = & \theta_{h^{-1}} (y1^{[0]}1^{[0']}h(1^{[1]}) g^{-1} (1^{[1']}))\\
& = & y^{[0]}1^{[0][0]}1^{[0'][0]}h^{-1} (y^{[1]}1^{[0][1]}1^{[0'][1]}) h(1^{[1]}) g^{-1} (1^{[1']})\\
& = & y^{[0]}1^{[0]}1^{[0']}1^{[0'']}
h^{-1} (y^{[1]}{1^{[1]}}_{(1)}{1^{[1']}}_{(1)}1^{[1'']}) h({1^{[1]}}_{(2)}) 
g^{-1} ({1^{[1']}}_{(2)})\\
& = & y^{[0]}h^{-1} (y^{[1]})1^{[0]}
h^{-1} ({1^{[1]}}_{(1)}) h({1^{[1]}}_{(2)}) 1^{[0']}h^{-1}({1^{[1']}}_{(1)}) 
g^{-1} ({1^{[1']}}_{(2)})\\
& = & y^{[0]}h^{-1} (y^{[1]})1^{[0]} \epsilon (1^{[1]}) 1^{[0']}(gh)^{-1}(1^{[1']}) \\
& = & y^{[0]}h^{-1} (y^{[1]}) 1^{[0]}(gh)^{-1}(1^{[1]}) \in A_{h^{-1}}\cap A_{(gh)^{-1}}
\eqnast
Then $\theta_{h^{-1}}(A_h \cap A_{g^{-1}})\subseteq A_{h^{-1}}\cap A_{(gh)^{-1}}$. Finally, for any $x\in \theta_{h^{-1}}(A_h \cap A_{g^{-1}})$ we have
\beqnast
\theta_g \theta_h (x) & = & \theta_g (x^{[0]}h(x^{[1]}) 1^{[0]} g^{-1}( 1^{[1]})) \\
& = & x^{[0][0]}1^{[0][0]} g(x^{[0][1]}1^{[0][1]})h(x^{[1]})g^{-1}( 1^{[1]})\\
& = & x^{[0]}1^{[0]}1^{[0']} g({x^{[1]}}_{(1)}{1^{[1]}}_{(1)}1^{[1']})h({x^{[1]}}_{(2)})g^{-1}( {1^{[1]}}_{(2)})\\
& = & x^{[0]}g({x^{[1]}}_{(1)}) h({x^{[1]}}_{(2)}) 
1^{[0]}g({1^{[1]}}_{(1)})g^{-1}( {1^{[1]}}_{(2)}) 1^{[0']} g(1^{[1']}) \\
& = & x^{[0]}gh({x^{[1]}}) 1^{[0]}\epsilon ({1^{[1]}}) 1^{[0']} g(1^{[1']}) \\
& = & x^{[0]}gh({x^{[1]}}) 1^{[0]} g(1^{[1]}) ,
\eqnast
while, on the other hand, we have
\beqnast
\theta_{gh} (x) & = & \theta_{gh} (x1^{[0]}1^{[0']}h^{-1}(1^{[1]}) (gh)^{-1} (1^{[1']}))\\
& = & x^{[0]}1^{[0][0]}1^{[0'][0]}gh (x^{[1]}1^{[0][1]}1^{[0'][1]}) h^{-1}(1^{[1]}) (gh)^{-1} (1^{[1']})\\
& = & x^{[0]}1^{[0]}1^{[0']}1^{[0'']}
gh (x^{[1]}{1^{[1]}}_{(1)}{1^{[1']}}_{(1)}1^{[1'']}) h^{-1}({1^{[1]}}_{(2)}) 
(gh)^{-1} ({1^{[1']}}_{(2)})\\
& = & x^{[0]}gh (x^{[1]})1^{[0]} gh ({1^{[1]}}_{(1)}) h^{-1}({1^{[1]}}_{(2)}) 1^{[0']}
gh({1^{[1']}}_{(1)}) (gh)^{-1} ({1^{[1']}}_{(2)}) 1^{[0'']}gh(1^{[1'']}) \\
& = & x^{[0]}gh (x^{[1]})1^{[0]} g ({1^{[1]}})  1^{[0']} gh(1^{[1']}) \\
& = & x^{[0]}gh({x^{[1]}}) 1^{[0]} g(1^{[1]}) .
\eqnast
Then, $\theta_g \circ \theta_h (x) =\theta_{gh} (x)$, making the data $(\{ A_g \}_{g\in G} , \{ \theta_g \}_{g\in G} )$ a partial action of the group $G$ on the algebra $A$.

The last step is to make a partial action of $G$ on the set $M=\mbox{Hom}_{\underline{Alg}} (A, k)$, defining for each $g\in G$ the subsets $M_g =\mbox{Hom}_{\underline{Alg}} (A_g , k)$, and the maps $\alpha_g : M_{g^{-1}} \rightarrow M_g$ given by $\alpha_g (P)=P\circ \theta_{g^{-1}}$, for $P\in M_{g^{-1}}$. It is easy to see that this data defines a partial action of $G$ on $M$. Thanks to \leref{decomp}, we know moreover that $M$ decomposes in a disjoint union $M_g\sqcup M'_g$ for any $g\in G$. Hence the partial action is algebraic and this concludes our proof.
\end{proof}

The above constructions are clearly functorial. Hence we obtain the following result, generalizing the classical result for global actions

\begin{corollary}
Let $H$ be a commutative Hopf algebra and $G=\Hom_{\Alg}(H,k)$ the corresponding algebraic group, then there is an equivalence between the category of  commutative  right partial $H$-comodule algebras and the category of algebraic partial actions of the group $G$.
\end{corollary}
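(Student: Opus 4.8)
The plan is to recognize the two preceding Propositions as the object-level parts of two functors which are mutually quasi-inverse, the whole correspondence being a partial refinement of the classical Nullstellensatz duality. Throughout I work in the affine geometric setting, so that every commutative partial comodule algebra occurring is a coordinate algebra, i.e.\ a reduced finitely generated $k$-algebra; recall that over the algebraically closed field $k$ the assignments $M\mapsto \Oo(M)$ and $A\mapsto \Hom_{\Alg}(A,k)$ form a contravariant equivalence between affine algebraic sets (with polynomial maps) and such algebras, with natural isomorphisms $\eta_M\colon M\to \Hom_{\Alg}(\Oo(M),k)$, $p\mapsto \ev_p$, and $\zeta_A\colon A\to \Oo(\Hom_{\Alg}(A,k))$, $a\mapsto(\varphi\mapsto \varphi(a))$.

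First I would promote each Proposition to a functor by describing its effect on morphisms. A morphism $\psi$ of algebraic partial actions from $M$ to $N$ (a polynomial map sending $M_g$ into $N_g$ and intertwining the partial maps $\alpha_g$) pulls back to an algebra morphism $\psi^\ast\colon \Oo(N)\to\Oo(M)$, and using the explicit description \eqref{coactionfromalgebraicaction} of the reconstructed coaction one checks that $\psi^\ast$ intertwines the two partial coactions, so it is a morphism of partial comodule algebras; functoriality is immediate from that of $\Oo(-)$. Dually, a morphism $f\colon A\to B$ of partial comodule algebras induces $f^\ast\colon \Hom_{\Alg}(B,k)\to\Hom_{\Alg}(A,k)$, and because $f$ is compatible with the coactions it respects the idempotents $1_g=1^{[0]}g(1^{[1]})$ and hence the domains $A_g=1_gA$ and the maps $\theta_g$; this makes $f^\ast$ a morphism of algebraic partial actions. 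Both functors are contravariant, so the asserted equivalence is in fact a duality.

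The heart of the argument is to produce the two natural isomorphisms exhibiting these functors as quasi-inverse, and here I would simply transport the classical isomorphisms $\eta$ and $\zeta$ and check that they are compatible with the partial structures. Starting from a partial coaction $\prho$ on $A$, one forms the partial action $(\{A_g\},\{\theta_g\})$ on $A$, passes to $M=\Hom_{\Alg}(A,k)$ with domains $M_g=\Hom_{\Alg}(A_g,k)$ and maps $\alpha_g$, and then reconstructs a coaction on $\Oo(M)$ via \eqref{coactionfromalgebraicaction}. The claim is that $\zeta_A\colon A\to\Oo(M)$ intertwines $\prho$ with this reconstructed coaction. The crucial point is the dictionary between the two descriptions of the partial datum: the characteristic function $e$ of the compatible set $G\bullet M$ furnished by \leref{decomp} corresponds under $\zeta_A$ to $\prho(\um)$, because $e(p,g)=1$ exactly when $p\in M_g$, i.e.\ exactly when the idempotent $1_g=1^{[0]}g(1^{[1]})$ does not vanish at $p$; once this is established, matching the full coaction reduces to the identity $\theta_g=(I\otimes g)\circ\prho$ already recorded in the construction. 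Running the two constructions in the opposite order and invoking $\eta_M$ gives the other natural isomorphism by the symmetric bookkeeping.

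The step I expect to be the main obstacle is precisely this matching of the two encodings of the partial data under the spectrum functor --- verifying that the ``global-to-partial'' idempotent $e$ of \leref{decomp} coincides with $\prho(\um)$, and that the family $\{M_g,\alpha_g\}$ reconstructed from $\prho$ agrees, pointwise through evaluation maps, with the one built directly from the algebraic partial action. Conceptually everything is forced by the classical duality together with the two Propositions; the real care lies in pinning down the correct notion of morphism in the category of algebraic partial actions (so that functoriality and the intertwining conditions hold on the nose) and in checking that the reduced tensor idempotent $\prho(\um)$ is genuinely the characteristic function of $G\bullet M$. All remaining verifications are routine translations between the group-indexed and comodule descriptions, which I would not write out in full.
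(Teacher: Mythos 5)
Your strategy coincides with the paper's: the paper's entire proof is the remark that the two preceding Propositions are ``clearly functorial'', and what you supply --- morphisms, functoriality, and the classical Nullstellensatz duality with its natural isomorphisms $\eta$, $\zeta$ --- is exactly the intended filling-in (your restriction to reduced finitely generated algebras, and your observation that the equivalence is contravariant, are both precisions the paper glosses over). However, one concrete step fails as written. Your parenthetical definition of a morphism of algebraic partial actions, a polynomial map $\psi\colon M\to N$ with $\psi(M_g)\subseteq N_g$ intertwining the $\alpha_g$'s, is too weak for the claim that $\psi^*$ intertwines the reconstructed coactions \eqref{coactionfromalgebraicaction}. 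Testing the required identity $\prho_M\circ\psi^* = (\psi^*\ot I)\circ\prho_N$ on the unit function $\tilde f=1_{\Oo(N)}$ already gives
\[
\prho_M(\psi^*\tilde f)(p,g)=\left\{\begin{array}{ll} 1, & p\in M_g\\ 0, & \mbox{otherwise}\end{array}\right.
\qquad
(\psi^*\ot I)\bigl(\prho_N(\tilde f)\bigr)(p,g)=\left\{\begin{array}{ll} 1, & \psi(p)\in N_g\\ 0, & \mbox{otherwise,}\end{array}\right.
\]
so equality forces $\psi^{-1}(N_g)=M_g$, not merely $\psi(M_g)\subseteq N_g$. A minimal counterexample to your weaker condition: $G=\Z_2=\{e,\sigma\}$, $N$ two points swapped by a global action, $M$ a single point with $M_\sigma=\emptyset$, and $\psi$ the inclusion of $M$ onto one of the two points; this satisfies your conditions, yet $\psi^*$ is not a morphism in $\ParCoAct_H$. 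Hence with the weak notion the pullback functor $\Oo(-)$ is not even well defined into $\ParCoAct_H$, and the asserted (anti-)equivalence breaks down, the geometric side having strictly more morphisms than the algebraic side can match.

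The fix is forced by the algebraic side, as you half-anticipate when you write that the ``real care'' lies in pinning down the morphisms: a morphism of algebraic partial actions must satisfy $\psi^{-1}(N_g)=M_g$ for all $g\in G$ (equivalently, it must pull back the characteristic idempotent of $G\bullet N$ to that of $G\bullet M$), together with the intertwining condition on those domains. This is precisely what the $\ParCoAct_H$ condition $\prho_B\circ f=(f\ot I)\circ\prho_A$ applied to $\um$ dictates, and conversely every comodule-algebra morphism $f$ induces a spectrum map with this stronger property, since $f(1^{[0]}g(1^{[1]}))$ is the corresponding idempotent of the target. With that definition in place, the rest of your argument --- transport of $\eta$ and $\zeta$, the identification of the idempotent $e$ of \leref{decomp} with $\prho(\um)$, and the matching $\theta_g=(I\ot g)\circ\prho$ --- goes through and completes the proof.
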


As finite groups provide (trivial) examples of algebraic groups, the above result applies in particular to this situation. Hence we obtain a bijective correspondence between partial actions of a finite group $G$ on a finite set $X$ and partial coactions of the dual group algebra $(kG)^*$ on the algebra $A=\Fun(X,k)$ of $k$ valued functions on $X$. Explicitly, given a partial action $\alpha$ of $G$ on $X$, the partial coaction of $(kG)^*$ on $A$ is given by
\begin{equation}
\label{parcoa}
\prho (f) =\sum_{g\in G} e_g (f\circ \alpha_{g^{-1}}) \otimes p_g .
\end{equation}
where $f\in \Fun(X,k)$, $e_g$ is the characteristic function of the domain $X_g$ and $p_g\in kG^*$ is given by $p_g(h)=\delta_{g,h}$ for all $g,h\in G$.

\subsubsection{The Hopf algebroid associated to a partial coaction}

As shown previously, from a left partial action of a co-commutative Hopf algebra $H$ on a commutative algebra $A$, one can endow the partial smash product $\underline{A\# H}$ with a structure of an $A$ Hopf algebroid. The aim of this section is to show that in the dual case, of a right partial coaction of a commutative Hopf algebra $H$ on a commutative algebra $A$, one can construct a Hopf algebroid as well. 

From now on, let $H$ denote a commutative Hopf algebra $H$ and $A$ a commutative right partial $H$-module algebra, with partial coaction $\prho :A\rightarrow A\otimes H$. We can define the left source and target maps $s=s_l ,t=t_l : A\rightarrow A\underline{\otimes} H$ as
\[
s(a) =a1^{[0]} \otimes 1^{[1]} , \qquad t(a)=\prho (a)=a^{[0]}\otimes a^{[1]} .
\]
The right source and target maps are defined as $s_r =t_l$ and $t_r =s_l$. It is easy to see that both maps are algebra morphisms, since $A$ and $H$ are commutative algebras, $t$ is an algebra anti-morphism and the images of $s_l$ and $t_l$ mutually commute. The $A$-bimodule structure on $A\underline{\otimes} H$, is given by
\[
a\cdot (b1^{[0]}\otimes h1^{[1]} )\cdot c =s_l (a) t_l (c) (b1^{[0]} \otimes h1^{[1]})=abc^{[0]}1^{[0]} \otimes c^{[1]}1^{[1]} =(b1^{[0]} \otimes h1^{[1]}) s_r (c) t_r (a) .
\]

The coring structure, introduced in \leref{splitcoring},
\beqnast
\TDelta (a1^{[0]} \otimes h1^{[1]}) & = & a1^{[0]}\otimes h_{(1)} 1^{[1]} \otimes_A 1^{[0']} \otimes h_{(2)} 1^{[1']} \\
\Tepsilon (a1^{[0]} \otimes h1^{[1]}) & = & a\epsilon (h) 
\eqnast
will be the same for both, the left and the right bialgebroid sructures, this is because of the commutativity of both $A$ and $H$. 

Due to the axiom (PRHCA3) of partial coactions, we find the following useful expressions for the comultiplication:
\begin{eqnarray*}
\TDelta (a1^{[0]} \otimes h1^{[1]} ) &=&a1^{[0]}1^{[0']} \otimes h_{(1)} {1^{[1]}}_{(1)} 1^{[1']} \otimes_A 1^{[0'']} \otimes h_{(2)} {1^{[1]}}_{(2)} 1^{[1'']}\\
& =&    a1^{[0][0]} 1^{[0']} \otimes h_{(1)} 1^{[0][1]} 1^{[1']} \otimes_A 1^{[0'']} \otimes h_{(2)} 1^{[1]}1^{[1'']}\\
& =&  (a1^{[0']} \otimes h_{(1)} 1^{[1']})\cdot 1^{[0]} \otimes_A 1^{[0'']} \otimes h_{(2)} 1^{[1]}1^{[1'']} \\
&=&  a1^{[0']} \otimes h_{(1)} 1^{[1']} \otimes_A 1^{[0]}\cdot ( 1^{[0'']} \otimes h_{(2)} 1^{[1]}1^{[1'']} )\\
&& =  a1^{[0']} \otimes h_{(1)} 1^{[1']} \otimes_A 1^{[0]}1^{[0'']} \otimes h_{(2)} 1^{[1]}1^{[1'']} \\
& =&  a1^{[0']} \otimes h_{(1)} 1^{[1']} \otimes_A 1^{[0]} \otimes h_{(2)} 1^{[1]} .
\end{eqnarray*}

\begin{lemma}\lelabel{commutativeleftbialgebroid} 
The data $(A\underline{\otimes} H , A, s_l ,t_l ,\TDelta ,\Tepsilon )$ define a structure of a left $A$-bialgebroid.
\end{lemma}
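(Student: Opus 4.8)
The plan is to check the five defining conditions of a left $A$-bialgebroid listed in \seref{preliminaries} for the datum $(A\underline{\otimes} H, A, s_l, t_l, \TDelta, \Tepsilon)$, most of which have in fact already been prepared. Condition (1) is clear, since $A\underline{\otimes} H=(A\ot H)\prho(\um)$ is a unital $k$-algebra with unit $\prho(\um)=1^{[0]}\ot 1^{[1]}$. Condition (2) was verified in the paragraph preceding the statement: by the commutativity of $A$ and $H$ the map $s_l$ is an algebra morphism, $t_l=\prho$ is an algebra anti-morphism, and the images of $s_l$ and $t_l$ commute (they even act centrally, so that the left- and right-handed $A$-bimodule structures coincide). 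Condition (3), namely that $(A\underline{\otimes}H,\TDelta,\Tepsilon)$ is an $A$-coring for the bimodule structure induced by $s_l,t_l$, is exactly the content of \leref{splitcoring}, which also records the $A$-bilinearity, coassociativity and counit property. It therefore remains to establish conditions (4) and (5).

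For condition (4) I would argue in two steps. First, the image of $\TDelta$ lies in the Takeuchi product $A\underline{\otimes}H\times_A A\underline{\otimes}H$; the key observation here is that, because $A\underline{\otimes}H$ is a \emph{commutative} algebra, right multiplication by $t_l(a)$ coincides with the right $A$-action $\triangleleft a$, and right multiplication by $s_l(a)$ coincides with the left $A$-action $a\triangleright$. Consequently the Takeuchi defining equation $\sum x_i t_l(a)\ot_A y_i=\sum x_i\ot_A y_i s_l(a)$ collapses to the balancing relation $\sum (x_i\triangleleft a)\ot_A y_i=\sum x_i\ot_A (a\triangleright y_i)$ defining $\ot_A$, and is thus automatically satisfied for every element, not only those in the image of $\TDelta$. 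Second, I would verify that $\TDelta$ is an algebra morphism by evaluating both $\TDelta(xy)$ and $\TDelta(x)\TDelta(y)$ on typical generators $x=a1^{[0]}\ot h1^{[1]}$ and $y=b1^{[0]}\ot k1^{[1]}$: using that the product of $A\underline{\otimes}H$ is componentwise, that $\prho(\um)$ is idempotent, and that $\Delta_H$ is an algebra map, both sides reduce to $ab1^{[0]}\ot h_{(1)}k_{(1)}1^{[1]}\ot_A 1^{[0']}\ot h_{(2)}k_{(2)}1^{[1']}$; unitality $\TDelta(\prho(\um))=\prho(\um)\ot_A\prho(\um)$ then follows similarly from $\Delta_H(1_H)=1_H\ot 1_H$.

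For condition (5) I would again compute on generators, using $\Tepsilon(a1^{[0]}\ot h1^{[1]})=a\epsilon(h)$. The identity $\Tepsilon(xy)=\Tepsilon(x\, s_l(\Tepsilon(y)))$ is immediate from the multiplicativity of $\epsilon$ together with the idempotency of $\prho(\um)$, while $\Tepsilon(xy)=\Tepsilon(x\, t_l(\Tepsilon(y)))$ uses in addition the counit axiom (PRHCA2) for the partial coaction, $b^{[0]}\epsilon(b^{[1]})=b$, in order to absorb the extra copy of $\prho$ produced by $t_l=\prho$.

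The computations throughout are routine; the only genuinely structural input beyond commutativity is the simplified expression for $\TDelta$ derived before the statement from axiom (PRHCA3), which guarantees that the two legs of the comultiplication carry the idempotents $\prho(\um)$ in a compatible way. I expect the main bookkeeping obstacle to be keeping the various primed copies of $1^{[0]}\ot 1^{[1]}$ organised; it is precisely the commutativity of both $A$ and $H$ that allows every superfluous copy of $\prho(\um)$ to be absorbed by idempotency, and that also trivialises the Takeuchi-containment in condition (4).
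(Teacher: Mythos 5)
Your proposal is correct and follows essentially the same route as the paper: it invokes \leref{splitcoring} for the coring structure, observes that the commutativity of $A$ and $A\underline{\otimes}H$ makes the Takeuchi product coincide with the ordinary tensor product $\ot_A$ (so containment is automatic), and then checks multiplicativity of $\TDelta$ and the two counit identities by computation on generators, with (PRHCA2) absorbing the extra coaction produced by $t_l=\prho$. Your spelled-out justification of the Takeuchi collapse (right multiplication by $t_l(a)$, resp.\ $s_l(a)$, equals $\triangleleft a$, resp.\ $a\triangleright$, so the defining equation is the balancing relation of $\ot_A$) is only a more explicit rendering of the one-line remark the paper makes at the same point.
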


\begin{proof} We have already seen that $A\underline{\otimes} H$ is an $A$ coring and because of the commutativity of $A$, we have that the $A$ bimodule structure of this coring is compatible with both  source and target above defined. Again, as $A$ and $A\underline{\otimes} H$ are commutative, the image of the comultiplication is automatically contained in the Takeuchi's product, which in the commutative case is the ordinary tensor product over $A$
\[
 (A\underline{\otimes} H) \times_A (A\underline{\otimes} H) = (A\underline{\otimes} H) \otimes_A (A\underline{\otimes} H) .
\]
The comultiplication is also multiplicative, 
\begin{eqnarray*}
\TDelta (a1^{[0]}\otimes h1^{[1]})\TDelta (b1^{[0']}\otimes k 1^{[1']}) \\
&&\hspace{-3cm} =  (a1^{[0]}\otimes h_{(1)}1^{[1]} \otimes_A 1^{[0']}\otimes h_{(2)} 1^{[1']})(b1^{[0'']}\otimes k_{(1)}1^{[1'']} \otimes_A 1^{[0''']} \otimes k_{(2)}1^{[1''']} ) \\
&&\hspace{-3cm} =  (a1^{[0]}\otimes h_{(1)}1^{[1]})(b1^{[0'']}\otimes k_{(1)}1^{[1'']}) \otimes_A 
(1^{[0']}\otimes h_{(2)} 1^{[1']})(1^{[0''']} \otimes k_{(2)}1^{[1''']} ) \\
&&\hspace{-3cm} =  ab1^{[0]}\otimes h_{(1)}k_{(1)} 1^{[1]} \otimes_A 1^{[0']} \otimes h_{(2)}k_{(2)} 1^{[1']} 
 =  \TDelta (ab1^{[0]} \otimes hk1^{[1]}) \\
&&\hspace{-3cm} =  \TDelta ((a1^{[0]}\otimes h1^{[1]})(b1^{[0']}\otimes k1^{[1']})) .
\end{eqnarray*}
Let us verify the axiom of the counit for a bialgebroid:
\[
\Tepsilon (XY) =\Tepsilon (X s(\Tepsilon (Y)))=\Tepsilon (X t(\Tepsilon (Y))), \qquad \forall X,Y\in A\underline{\otimes} H .
\]
For the first equality, we have
\begin{eqnarray*}
\Tepsilon ((a1^{[0]}\otimes h1^{[1]})s(\Tepsilon (b1^{[0']}\otimes k1^{[1']}))) & = & \Tepsilon ((a1^{[0]}\otimes h1^{[1]})(b\epsilon (k) 1^{[0']}\otimes 1^{[1']})) \\
& = & \Tepsilon (ab\epsilon (k) 1^{[0]} \otimes h1^{[1]}) 
 =  ab\epsilon (h) \epsilon (k) \\
& = & \Tepsilon (ab1^{[0]}\otimes hk 1^{[1]}) 
 =  \Tepsilon ((a1^{[0]}\otimes h1^{[1]})(b1^{[0']}\otimes k1^{[1']})) .
\end{eqnarray*}
And for the second equality,
\begin{eqnarray*}
\Tepsilon ((a1^{[0]}\otimes h1^{[1]})t(\Tepsilon (b1^{[0']}\otimes k1^{[1']}))) & = & 
\Tepsilon ((a1^{[0]}\otimes h1^{[1]})(b^{[0]}\epsilon (k) \otimes b^{[1]})) \\
& =&  \Tepsilon (ab^{[0]}\epsilon (k) 1^{[0]}\otimes hb^{[1]}1^{[1]}) 
 =  ab^{[0]}\epsilon (b^{[1]}) \epsilon (h) \epsilon (k) \\
& = & ab\epsilon (h) \epsilon (k) 
 =  \Tepsilon (ab1^{[0]}\otimes hk 1^{[1]}) \\
& = & \Tepsilon ((a1^{[0]}\otimes h1^{[1]})(b1^{[0']}\otimes k1^{[1']})) .
\end{eqnarray*}
Therefore, we have that $(A\underline{\otimes} H, s,t, \TDelta ,\Tepsilon )$ is an $A$ bialgebroid. 
\end{proof}

The structure of a right $A$ bialgebroid is completely analogous, due to the fact that $A$ is a commutative algebra. Then we have the following result.

\begin{lemma}\lelabel{commutativerightbialgebroid} 
The data $(A\underline{\otimes} H , A, s_r ,t_r ,\TDelta ,\Tepsilon )$ define a structure of a right $A$ bialgebroid.
\end{lemma}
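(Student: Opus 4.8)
The plan is to mirror the proof of \leref{commutativeleftbialgebroid}, exploiting the commutativity of $A$ and $H$ to reduce almost everything to facts already established there, so that only one genuinely new computation remains. First I would record that, because $A$ and $H$ are commutative, the ``right handed'' bimodule structure $a\triangleright X\triangleleft c = X s_r(c)t_r(a)$ coincides, after absorbing the idempotent $\prho(1_A)$, with the central structure $a\cdot(b1^{[0]}\otimes h1^{[1]})\cdot c = abc^{[0]}1^{[0]}\otimes c^{[1]}1^{[1]}$ already displayed before the lemma. Hence the underlying $A$-coring $(A\underline{\otimes} H,\TDelta,\Tepsilon)$ is literally the same coring produced in \leref{splitcoring}, and its coassociativity and counit axioms require no reverification.

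Next I would check the data specific to the right structure. Since $s_r=t_l=\prho$ and $t_r=s_l$, the multiplicativity of $\prho$ expressed by (PRHCA1) shows that $s_r$ is an algebra morphism, while the commutativity of $A\underline{\otimes} H$ makes $t_r=s_l$ simultaneously a morphism and an anti-morphism and forces the images of $s_r$ and $t_r$ to commute. The multiplicativity of $\TDelta$ is the very computation carried out in \leref{commutativeleftbialgebroid}, so I would simply cite it; and because $A$ and $A\underline{\otimes} H$ are commutative, the right Takeuchi product collapses to the ordinary tensor product over $A$, making the image condition on $\TDelta$ automatic.

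The only computation that must be done afresh is the right counit axiom $\Tepsilon(XY)=\Tepsilon(s_r(\Tepsilon(X))Y)=\Tepsilon(t_r(\Tepsilon(X))Y)$. Taking $X=a1^{[0]}\otimes h1^{[1]}$ and $Y=b1^{[0']}\otimes k1^{[1']}$, I would expand $s_r(\Tepsilon(X))=\epsilon(h)\prho(a)=\epsilon(h)(a^{[0]}\otimes a^{[1]})$, multiply by $Y$, apply $\Tepsilon$, and then collapse the expression using (PRHCA2) in the form $a^{[0]}\epsilon(a^{[1]})=a$ to obtain $ab\epsilon(h)\epsilon(k)=\Tepsilon(XY)$. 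The $t_r$-side is even easier, since $t_r(\Tepsilon(X))=a\epsilon(h)1^{[0]}\otimes 1^{[1]}$ and the computation closes without invoking the counit axiom of the coaction at all.

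I do not anticipate a real obstacle: the subtlety is purely bookkeeping, namely keeping track of the swap $s_r=t_l$, $t_r=s_l$ and ensuring that the single essential appeal to (PRHCA2) falls on the $s_r$-side of the counit axiom, which is precisely where the coaction map $\prho$ (rather than the trivial map $s_l$) enters.
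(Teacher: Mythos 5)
Your proposal is correct and follows essentially the same route as the paper, which gives no separate proof but declares the right bialgebroid structure ``completely analogous'' to the left one established in \leref{commutativeleftbialgebroid}: commutativity makes the two bimodule structures, the coring of \leref{splitcoring}, the multiplicativity of $\TDelta$, and the collapse of the Takeuchi product carry over verbatim, leaving only the right counit axiom to check. Your computation of that axiom, including the key bookkeeping point that the single appeal to (PRHCA2) now lands on the $s_r=t_l=\prho$ side, is precisely the content the paper leaves implicit.
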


Finally, we have the structure of a Hopf algebroid on $A\underline{\otimes}H$.

\begin{thm}\thlabel{Hoidpartialcoaction}
Let $H$ be a commutative Hopf algebra and $A$ be a commutative right partial $H$ comodule algebra with partial coaction $\prho$, as above. Then the algebra $A\underline{\otimes} H =\prho (1_A )(A\otimes H)$ is a commutative Hopf algebroid over the base algebra $A$, called the partial split Hopf algebroid. 
The partial coaction $\prho$ is global if, and only if, the Hopf algebroid $A\underline{\otimes} H$ coincides with the split Hopf algebroid $A\otimes H$. 
\end{thm}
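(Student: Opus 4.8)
The plan is to build on the two bialgebroid structures established in \leref{commutativeleftbialgebroid} and \leref{commutativerightbialgebroid}, so that the only things left to produce are an antipode $\mathcal S$ and a verification of the Hopf algebroid axioms (i)--(iv) from page~\pageref{defHopfalgebroid}. Commutativity of $A$ and $H$ simplifies matters drastically: one takes $\tilde A=A$, the required anti-isomorphism $A\cong A^{op}$ being the identity; the left and right comultiplications and counits coincide, $\ud_l=\ud_r=\TDelta$ and $\ue_l=\ue_r=\Tepsilon$; and the two Takeuchi products collapse to the ordinary $\ot_A$. In particular axiom (ii) becomes merely the coassociativity of $\TDelta$, already available, and ``anti-algebra morphism'' means the same as ``algebra morphism''.

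The key step is the construction of $\mathcal S$. The natural candidate is the partial analogue of the split antipode $a\ot h\mapsto a^{[0]}\ot a^{[1]}S(h)$ recalled in \seref{preliminaries}. To sidestep any well-definedness issue on the reduced tensor product, I would first define $G\colon A\ot H\to A\ot H$ on the \emph{whole} tensor product by $G(a\ot h)=\prho(a)(\um\ot S(h))=a^{[0]}\ot a^{[1]}S(h)$, and then set $\mathcal S=\pi\circ G|_{A\otu H}$, where $\pi(x)=x\prho(\um)$ is the projection onto $A\otu H$. The single genuinely partial ingredient here is the identity
\[
1^{[0][0]}\ot 1^{[0][1]}S(1^{[1]})=\prho(\um),
\]
which follows from (PRHCA3), commutativity of $H$, and (PRHCA2); with it one computes the closed form $\mathcal S(a1^{[0]}\ot h1^{[1]})=a^{[0]}1^{[0']}\ot a^{[1]}S(h)1^{[1']}$. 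The point demanding the most care throughout is exactly this interplay between $\mathcal S$ and the idempotent $\prho(\um)$, and it is where (PRHCA3) enters.

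I would then verify that $\mathcal S$ is unital and multiplicative, using (PRHCA1), the anti-multiplicativity of $S$, commutativity of $H$, and idempotency of $\prho(\um)$; by commutativity this is the sought anti-algebra map. For the remaining axioms: axiom (i) is immediate from the formula for $\Tepsilon$ together with (PRHCA2); axiom (iii), once $\mathcal S$ is known to be an algebra map, reduces to the two identities $\mathcal S\circ t=s$ and $\mathcal S\circ s=t$, both checked from the closed form via (PRHCA3); and axiom (iv), the two antipode--convolution identities, follows cleanly from $S(h_{(1)})h_{(2)}=\epsilon(h)1_H=h_{(1)}S(h_{(2)})$ together with commutativity of $H$ and the idempotency of $\prho(\um)$.

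Finally, for the characterization of globality I would use that, dually to the Remark on global actions, a partial coaction is global precisely when $\prho(\um)=\um\ot 1_H$. If this holds then $A\otu H=(A\ot H)\prho(\um)=A\ot H$, axiom (PRHCA3) turns into honest coassociativity of $\prho$ so that $A$ is a genuine $H$-comodule algebra, and all the structure maps constructed above specialise exactly to those of the split Hopf algebroid of \seref{preliminaries}. Conversely, if $A\otu H=A\ot H$ then the units of the two algebras must agree, which forces $\prho(\um)=\um\ot 1_H$ and hence globality.
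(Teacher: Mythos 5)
Your proposal is correct and follows essentially the same route as the paper's proof: it builds on \leref{commutativeleftbialgebroid} and \leref{commutativerightbialgebroid}, introduces the same antipode $\TS(a1^{[0]}\ot h1^{[1]})=a^{[0]}1^{[0']}\ot a^{[1]}S(h)1^{[1']}$, verifies axioms (i)--(iv) with the same ingredients (the counit formula and (PRHCA2) for (i), coassociativity for (ii), (PRHCA3) for (iii), and the antipode identity $S(h_{(1)})h_{(2)}=\epsilon(h)1_H$ plus idempotency of $\prho(\um)$ for (iv)), and settles the globality statement via the equivalence with $\prho(\um)=\um\ot 1_H$. Your two refinements --- defining $\mathcal S=\pi\circ G$ with $G$ given on all of $A\ot H$, which disposes of a well-definedness issue the paper passes over silently, and deducing axiom (iii) from multiplicativity of $\mathcal S$ together with $\mathcal S\circ t=s$ and $\mathcal S\circ s=t$ instead of the paper's direct computation --- are both sound and slightly cleaner, but do not change the substance of the argument.
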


\begin{proof}
By \leref{commutativeleftbialgebroid} and \leref{commutativerightbialgebroid}, we know that $A\ul\ot H$ is a bialgebroid. To obtain a Hopf algebroid structure, we define an antipode $\tilde S$ by 
\[
\TS (a1^{[0]} \otimes h1^{[1]} ) =a^{[0]}1^{[0]} \otimes a^{[1]} S(h) 1^{[1]} .
\]
which is verified to be an anti algebra morphism. The Hopf algebroid axioms (i) and (ii) (see \seref{algstruc}) are easy to check. For the axiom (iii), take $a,c\in A$ and $b1^{[0]}\otimes h1^{[1]}\in A\underline{\otimes} H$, then
\beqnast 
\TS (t_l (a)(b1^{[0]}\otimes h1^{[1]})t_r (c)) & = & \TS (a^{[0]} b c1^{[0]}\otimes a^{[1]}h1^{[1]}) \\
&&\hspace{-3cm} =  a^{[0][0]} b^{[0]}c^{[0]}1^{[0]} \otimes a^{[0][1]}b^{[1]}c^{[1]} 1^{[1]} S(h) S(a^{[1]}) \\
&&\hspace{-3cm} =  \left( c^{[0]}\otimes c^{[1]} \right) \left( b^{[0]}a^{[0][0]} 1^{[0]} \otimes b^{[1]} a^{[0][1]} 1^{[1]} S(a^{[1]}) S(h) \right) \\
&&\hspace{-3cm} =  s_r (c) \left( b^{[0]} 1^{[0]} a^{[0]} \otimes b^{[1]} S(h) 1^{[1]} {a^{[1]}}_{(1)} S({a^{[1]}}_{(2)}) \right) \\
&&\hspace{-3cm} =  s_r (c) \left( b^{[0]} 1^{[0]} a^{[0]} \epsilon (a^{[1]}) \otimes b^{[1]} S(h) 1^{[1]} \right) 
 =  s_r (c) \left( b^{[0]} 1^{[0]} a \otimes b^{[1]} S(h) 1^{[1]} \right) \\
&&\hspace{-3cm} =  s_r (c) ( b^{[0]} 1^{[0]}  \otimes b^{[1]} S(h) 1^{[1]})  s_l (a) 
 =  s_r(c)\TS (b1^{[0]}\otimes h1^{[1]}) s_l (a) .
\eqnast
Finally, for the item (iv), take $a1^{[0]}\otimes h1^{[1]}\in A\underline{\otimes} H$, then  
\beqnast
\mu \circ (\TS \otimes I)\circ \TDelta (a1^{[0]}\otimes h1^{[1]}) & = & \TS (a1^{[0]}\otimes h_{(1)} 1^{[1]} )(1^{[0']}\otimes h_{(2)} 1^{[1']} )\\
& = & (a^{[0]}1^{[0]}\otimes a^{[1]} S(h_{(1)}) 1^{[1]} )(1^{[0']}\otimes h_{(2)} 1^{[1']} )\\
& = & a^{[0]}1^{[0]}\otimes a^{[1]} S(h_{(1)}) h_{(2)} 1^{[1]} 
 =  a^{[0]} \epsilon (h) 1^{[0]}\otimes a^{[1]} 1^{[1]} \\
& = & s_r (a\epsilon (h)) 
 =  s_r \circ \ue_r (a1^{[0]}\otimes h1^{[1]}) ,
\eqnast
and 
\beqnast
\mu \circ (I\otimes \TS )\circ \TDelta (a1^{[0]}\otimes h1^{[1]}) & = & 
(a1^{[0]}\otimes h_{(1)} 1^{[1]} )\TS (1^{[0']}\otimes h_{(2)} 1^{[1']} )\\
& = & (a1^{[0]}\otimes h_{(1)} 1^{[1]} )\TS (1^{[0']}\otimes S(h_{(2)}) 1^{[1']} )\\
& = & a1^{[0]}\otimes h_{(1)} S(h_{(2)}) 1^{[1]} 
 =  a1^{[0]} \epsilon (h) \otimes 1^{[1]} )\\
& = & s_l (a\epsilon (h))
 =  s_l \circ \ue_l (a1^{[0]}\otimes h1^{[1]}).
\eqnast
Therefore, $A\underline{\otimes}H$ is a Hopf algebroid.

Now consider a global coaction $\prho: A\rightarrow A\otimes H$, then $\prho$ is a unital morphism, that is, $\prho (1_A )=1_A \otimes 1_H$. Therefore 
\[
A\underline{\otimes}H =\prho (1_A )(A\otimes H) =A\otimes H.
\]
As explained in \seref{algstruc} $A\otimes H$ can be endowed with the structure of a Hopf algebroid, called the split Hopf algebroid. One easily observes that thanks to the simplification $1^{[0]}\otimes 1^{[1]}=1_A \otimes 1_H$, the split Hopf algebroid coincides with the Hopf algebroid structure on $A\underline{\otimes}H$ defined above. On the other hand, if $A\underline{\otimes}H =A\otimes H$, then $\prho (1_A )=1_A \otimes 1_H$ which makes $\prho$ into a global coaction.
\end{proof}

\subsubsection{A dual version of a theorem by Kellendock and Lawson}

As a motivation for our next result, we will first recall the relationship between partial actions of groups on sets and groupoids as this was pointed out initially by Abadie \cite{Aba2} and shown to lead to an equivalence of categories by Kellendonk and Lawson \cite{KL}. 

Consider a partial action of a group $G$ on a set 
$X$, given by the data $( \{ X_g \subseteq X \}_{g\in G} , \{ \alpha_g : X_{g^{-1}}\rightarrow X_g \}_{g\in G} )$. From these data it is possible to construct a groupoid
\[
\mathcal{G}(G,X,\alpha )=\{ (x,g)\in X\times G \, |\, x\in X_g \}.
\]
The objects in this groupoid are the elements of $X$. The groupoid structure is given by the source and target maps,
\[
s(x,g)= \alpha_{g^{-1}} (x) , \qquad t(x,g)=x ,
\]
the product
\[
(x,g)(y,h)=\left\{ \begin{array}{ll} (x, gh) & \mbox{if} \ y=\alpha_{g^{-1}}(x)\\
                                      \underline{\;} & \mbox{otherwise,}
                   \end{array} \right.
\]
and the inverse
\[
(x,g)^{-1} =(\alpha_{g^{-1}} (x) ,g^{-1}) .
\]

It is easy to see that the projection map 
\[
\begin{array}{rccc} \pi_2 : & \mathcal{G}(G,X,\alpha ) & \rightarrow & G \\
\, & (x,g) & \mapsto & g
\end{array}
\]
is a functor, considering $G$ as a groupoid with one single element. This functor is {\em star injective}, which is a strong version of faithfulness, in the sense of the following definition.
\begin{defi} \begin{enumerate}[(1)]\item Let $\mathcal{C}$ be a small category and $x$ be an object in $\mathcal{C}$. The star over $x$ is the set $\mathcal{S}(x)= \{ f\in \mbox{Hom}_{\mathcal{C}} (x,y) \, | \, y\in \mathcal{C} \}$.
\item
 A functor $F:\mathcal{C} \rightarrow \mathcal{D}$ between two small categories is said to be star injective (surjective) if for every $x\in \mathcal{C}$, the function $F|_{\mathcal{S}(x)}$ is injective (surjective).
\end{enumerate}
\end{defi}

Applied to the functor $\pi_2$, star injectivity simply means that given $x\in X$ and $g\in G$ there exists at most one element $\gamma\in \mathcal{G}(G,X,\alpha )$ such that $s(\gamma )=x$ and $\pi_2 (\gamma )=g$ (by the way, this element exists only when $x\in X_{g^{-1}}$ and it is written explicitly as $\gamma =(\alpha_g (x), g)$). The functor $\pi_2$ is star surjective if, and only if, the action of $G$ on $X$ is global \cite{KL}.

Our Hopf algebroid $A\underline{\otimes} H$ can be seen as a dual of this groupoid arising from partial group actions, at least for the case of an algebraic partial action $\alpha$ of an affine algebraic group on an affine set. Let us give the details in case of a finite group $G$ acting partially on a finite set $X$. Consider the right partial coaction of the Hopf algebra $H=(kG)^*$ on the algebra $A=\mbox{Fun}(X,k)$ as defined by the formula
\[
\prho (f) (p,g) =\left\{ \begin{array}{lcr} f(\alpha_{g^{-1}} (p)) & \mbox{ for } & p\in X_g \\
0 & \,  & \mbox{otherwise} \end{array} \right.
\]
denoting the basis of $(kG)^*$ by $\{ p_g \}_{g\in G}$, where $p_g (h)=\delta_{g,h}$ and denoting the characteristic functions of $X_g$ by $1_g$, then the partial coaction can be written as
\[
\prho (f) =\sum_{g\in G} 1_g .(f\circ \alpha_{g^{-1}}) \otimes p_g.
\]
The Hopf algebroid $A\underline{\otimes} H$ is exactly the Hopf algebroid of functions on the groupoid $\mathcal{G}(G,X,\alpha )$. Indeed, first note that
\[
\prho (1_A )= \sum_{g\in G} 1_g \otimes p_g,
\]
then, an element $(a\otimes h) (\prho (1_A ))\in A\underline{\otimes}H$ is a function defined on $\mathcal{G}(G,X,\alpha )$, because if evaluated on a pair $(x,g)$ it will be nonzero only if $x\in X_g$. On the other hand, 
\[
\mbox{Fun} (\mathcal{G}(G,X,\alpha ) ,k)\subseteq \mbox{Fun} (X\times G ,k)\cong A\otimes H .
\]
denoting by $\chi_x \in \mbox{Fun}(X,k)$ the characteristic function of the element $x\in X$, then any $\varphi \in \mbox{Fun} (\mathcal{G}(G,X,\alpha ), k)$ can be written as
\[
\varphi =\sum_{x\in X, g\in G} a_{x,g} \chi_x \otimes p_g ,
\]
with $a_{x,g} \in k$, for every $x\in X$ , $g\in G$. But, as $\varphi$ is only defined on the groupoid $\mathcal{G}(G,X,\alpha )$ then $a_{x,g} =0$ if $x\notin X_g$, therefore
\[
\varphi =\sum_{ g\in G} \left(\sum_{x\in X_g} a_{x,g}\chi_x  \right) 1_g  \otimes p_g \in (A\otimes H)\prho (1_A )=A\underline{\otimes }H .
\]

Note that we have a morphism of algebras
\[
\begin{array}{rccc} F: & H & \rightarrow & A\underline{\otimes} H \\
\, & h & \mapsto & 1^{[0]}\otimes h1^{[1]} 
\end{array}
\]
It is easy to show that this map also satisfies
\begin{enumerate}
\item $\Tepsilon \circ F =\eta_A \circ \epsilon $;
\item $\TDelta \circ F = \pi \circ (F\otimes F) \circ \Delta$;
\item $\TS\circ F=F\circ S$;
\end{enumerate}
where the map $\pi :(A\underline{\otimes} H) \otimes (A\underline{\otimes} H) \rightarrow  
(A\underline{\otimes} H) \otimes_A (A\underline{\otimes} H)$ is the natural projection. That is equivalent to say that the map $F$ is a morphism of Hopf algebroids with different base algebras, considering the Hopf algebra $H$ as a Hopf algebroid having as base algebra the base field $k$.

Finally, note that, the Hopf algebroid, $A\underline{\otimes} H$ is totally defined by the image of the source map and the image of the map $F$. Indeed, a general element of $A\underline{\otimes}H$ can be written as
\[
\sum_i a^i 1^{[0]} \otimes h^i 1^{[1]} = \sum_i s(a^i )F(h^i ) .
\]

Given a commutative Hopf algebra $H$ and a commutative $A$ Hopf algebroid $\mathcal{H}$ such that there exists an algebra morphism $F:H\rightarrow \mathcal{H}$, one can construct out of these data an algebra morphism
\[
\begin{array}{rccc}
\Pi :  & A\otimes H & \rightarrow & \mathcal{H} \\
\, & a\otimes h & \mapsto & s(a) F(h) 
\end{array}
\]
In the case of the Hopf algebroid $A\underline{\otimes} H$, the morphism $\Pi$ is surjective and splits by the canonical inclusion. This motivates the following definition:

\begin{defi} Given a commutative Hopf algebra $H$ and a commutative $A$ Hopf algebroid $\mathcal{H}$, an algebra morphism $F:H\rightarrow \mathcal{H}$ is said to be right dual star injective, if 
\begin{enumerate}
\item[(DSI1)] the map $F$ is a morphism of Hopf algebroids, considering $H$ as a Hopf algebroid over the base field $k$;
\item[(DSI2)] the Hopf algebroid $\mathcal{H}$ is generated, as algebra, by the image of the source map and the image of $F$; that is $\mathcal{H}=s(A)F(H)$;
\item[(DSI3)] The surjective algebra map $\Pi :A\otimes H \rightarrow \mathcal{H}$ given by $\Pi (a\otimes h) =s(a)F(h)$ splits as an algebra morphism.
\end{enumerate}
\end{defi}

Obviously, there is a left version, by modifying the item (DSI3) by taking the algebra map $\Pi': H\otimes A \rightarrow \mathcal{H}$. The origin of the name ``dual star injective'' is that these axioms 
arise as a dualization of the star injectivity conditions for a functor between a groupoid $\mathcal{G}$ and a group $G$, taking the algebras of functions on these as explained above.

Recall from \cite{KL} that given a groupoid $\mathcal{G}$ and a star injective functor $F:\mathcal{G}\rightarrow G$, one can construct a partial action of the group $G$ on the set $X$ of objects of $\mathcal{G}$. For this, one defines for each $g\in G$ the subset
\[
X_g =\{ x\in X \, | \, \exists \gamma \in \mathcal{G}, \; s(\gamma )=x, \; 
F(\gamma )=g^{-1} \} ;
\]
and bijections $\alpha_g : X_{g^{-1}}\rightarrow X_g$ given by
\[
\alpha_g (x) =t(\gamma ), \qquad \mbox{ such that } \; s(\gamma )=x , \mbox{ and } \; 
F(\gamma ) = g .
\]
Moreover, this defined partial action is global if, and only if, the functor $F$ is star surjective.

Supposse now that $F:\mathcal{G}\rightarrow G$ is a star injective functor, where $\mathcal{G}$ is a finite groupoid and $G$ is a finite group. Then the functor $F$ induces an algebra morphism $\hat{F}$ between the Hopf algebra $H=(kG)^*$ and the Hopf algebroid $\mathcal{H}=\mbox{Fun}(\mathcal{G},k)$ by posing
$\hat{F} (f) (\gamma )=f(F(\gamma))$, for all $f\in (kG)^*$ and $\gamma \in \mathcal{G}$. The functoriality of $F$ implies directly the item DSI1 of the definition of a dual star injective algebra morphism. 

For the item DSI2, note that, for a finite groupoid $\mathcal{G}$ its algebra of functions has a basis of characteristic functions $\{ \chi_{\gamma} | \gamma \in \mathcal{G} \}$. Let $\gamma \in \mathcal{G}$ denote by $x=s(\gamma )$ and 
$g=F(\gamma )$. Then, it is easy to see that the characteristic function $\chi_{\gamma}$ can be written as
\[
\chi_{\gamma} =s(\chi_x )\hat{F} (p_g) =\Pi (\chi_x \otimes p_g ).
\]
Finally, for the item DSI3, consider the map $\sigma :\mathcal{H}\rightarrow A\otimes H$ given by
\[
\sigma (f) =\sum_{\gamma \in \mathcal{G}} f( \gamma ) \chi_{t(\gamma )}\otimes 
p_{F(\gamma )}.
\]
It is easy to see that $\Pi \circ \sigma =\mbox{Id}_{\mathcal{H}}$. Therefore, the morphism $\hat{F}$ is dual star injective.

The next result shows a deeper connection between partial coactions of commutative  Hopf algebras and commutative Hopf algebroids, and can be viewed as a dual version of the result by Kellendonk-Lawson cited above.

\begin{thm} \label{dualkellendonk} Let $H$ be a commutative Hopf algebra, $\mathcal{H}$ be a commutative $A$ Hopf algebroid and $F: H\rightarrow \mathcal{H}$ be a right dual star injective algebra morphism. Then $A$ is a right partial $H$ comodule algebra. The partial coaction defined on $A$ is global if, and only if, the morphism $\Pi :A\otimes H \rightarrow \mathcal{H}$ is an isomorphism.
\end{thm}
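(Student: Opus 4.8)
The plan is to produce the partial coaction directly from the splitting $\sigma$ of $\Pi$ furnished by (DSI3). Fix an algebra section $\sigma:\mathcal H\to A\otimes H$ of $\Pi$, so $\Pi\circ\sigma=\mathrm{id}_{\mathcal H}$, and set
\[
\prho:=\sigma\circ t_l:A\to A\otimes H,
\]
where $t_l$ is the left target map of $\mathcal H$. This is modelled on the split example, where $t_l(a)=\prho(a)$ and $\sigma$ is the canonical inclusion $A\otu H\hookrightarrow A\otimes H$. Before checking the axioms I would record the identities that drive everything: $\Pi\circ\prho=t_l$ (so, writing $\prho(a)=a^{[0]}\otimes a^{[1]}$, $s_l(a^{[0]})F(a^{[1]})=t_l(a)$), the idempotent $\prho(\um)=\sigma(1_{\mathcal H})=:e$ with $\Pi(e)=1_{\mathcal H}$, and $s_l(1^{[0]})F(1^{[1]})=1_{\mathcal H}$.

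Two of the three axioms are then immediate. (PRHCA1) holds since $\prho$ is a composite of algebra maps, commutativity making $t_l$ an algebra morphism. For (PRHCA2) I would first prove $\epsilon_l\circ\Pi=\mathrm{id}_A\otimes\epsilon_H$: using the bialgebroid counit axiom $\epsilon_l(XY)=\epsilon_l(X\,s_l(\epsilon_l(Y)))$ together with (DSI1) in the form $\epsilon_l\circ F=\epsilon_H(\,\cdot\,)\um$, one computes $\epsilon_l(s_l(a)F(h))=a\,\epsilon_H(h)$. Composing with $\sigma$ gives $(\mathrm{id}_A\otimes\epsilon_H)\circ\sigma=\epsilon_l$, and since $\epsilon_l\circ t_l=\mathrm{id}_A$ we obtain $(\mathrm{id}_A\otimes\epsilon_H)\circ\prho=\mathrm{id}_A$.

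The heart of the proof is (PRHCA3). I would first extract a ``master relation'' in $\mathcal H\otimes_A\mathcal H$. Applying $\Delta_{\mathcal H}$ to $\Pi(a\otimes h)=s_l(a)F(h)$ and using $\Delta_{\mathcal H}(s_l(a))=s_l(a)\otimes_A 1$ with (DSI1), $\Delta_{\mathcal H}(F(h))=F(h_{(1)})\otimes_A F(h_{(2)})$, yields the comultiplicativity
\[
\Delta_{\mathcal H}\big(\Pi(a\otimes h)\big)=\Pi(a\otimes h_{(1)})\otimes_A F(h_{(2)}).
\]
Taking $a\otimes h=\prho(a)$, using $\Pi\circ\prho=t_l$ and the bialgebroid identity $\Delta_{\mathcal H}(t_l(a))=1\otimes_A t_l(a)$, this gives
\[
s_l(a^{[0]})F({a^{[1]}}_{(1)})\otimes_A F({a^{[1]}}_{(2)})=1\otimes_A t_l(a).
\]
Both sides of (PRHCA3) live in $A\otimes H\otimes H$; applying $\Pi\otimes\mathrm{id}_H$ collapses them to $t_l(a^{[0]})\otimes a^{[1]}$ and to $s_l(a^{[0]})F({a^{[1]}}_{(1)})\otimes {a^{[1]}}_{(2)}$ respectively (the idempotent $e$ disappears because $\Pi(e)=1_{\mathcal H}$). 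The equality of these two elements of $\mathcal H\otimes H$ is, after applying $\mathrm{id}_{\mathcal H}\otimes F$ and projecting onto $\otimes_A$, precisely the master relation, using the defining relation $t_l(c)x\otimes_A y=x\otimes_A s_l(c)y$ to rewrite $t_l(a^{[0]})\otimes_A F(a^{[1]})=1\otimes_A t_l(a)$. Finally I would lift this identity from $\mathcal H\otimes H$ back to $A\otimes H\otimes H$ using that both sides of (PRHCA3) lie in $(\mathrm{im}\,\sigma)\otimes H$, on which $\Pi\otimes\mathrm{id}_H$ is injective with inverse $\sigma\otimes\mathrm{id}_H$. I expect this lifting to be the main obstacle: it amounts to the identification $\mathrm{im}\,\sigma=\prho(\um)(A\otimes H)$, equivalently that $\Pi$ restricts to an isomorphism $\prho(\um)(A\otimes H)\xrightarrow{\sim}\mathcal H$, and it is here that commutativity of $\mathcal H$ and the coalgebra compatibility encoded in the master relation must be used carefully. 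The symmetric axiom (PRHCA4) then follows by the mirror computation through the right bialgebroid structure, or directly from commutativity.

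For the last assertion, note $\prho(\um)=\sigma(1_{\mathcal H})$, so $\prho$ is global, i.e.\ $\prho(\um)=\um\otimes 1_H$, exactly when $\sigma$ is unital. If $\Pi$ is an isomorphism then $\sigma=\Pi^{-1}$ is unital, so the coaction is global, and $A\otu H=A\otimes H$ recovers the split Hopf algebroid of \thref{Hoidpartialcoaction}. Conversely, invoking the identification $\mathrm{im}\,\sigma=\prho(\um)(A\otimes H)$ established in the (PRHCA3) step, globality forces $\mathrm{im}\,\sigma=A\otimes H$, so the injective algebra map $\sigma$ is surjective, hence an isomorphism, and $\Pi=\sigma^{-1}$ is an isomorphism as well.
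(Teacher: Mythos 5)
Your construction $\prho=\sigma\circ t_l$, your verification of (PRHCA1) and of (PRHCA2) via the identity $\epsilon_{\mathcal{H}}\circ\Pi=I\otimes\epsilon_H$, and your treatment of the equivalence ``global $\Leftrightarrow$ $\Pi$ is an isomorphism'' all coincide with the paper's proof. The gap is in (PRHCA3), and as written it is fatal. Your master relation
\[
s_l(a^{[0]})F({a^{[1]}}_{(1)})\otimes_A F({a^{[1]}}_{(2)})=1_{\mathcal{H}}\otimes_A t_l(a)
\]
is an identity in $\mathcal{H}\otimes_A\mathcal{H}$, whereas the identity you need lives in $\mathcal{H}\otimes_k H$:
\[
t_l(a^{[0]})\otimes a^{[1]}=s_l(a^{[0]})F({a^{[1]}}_{(1)})\otimes {a^{[1]}}_{(2)}.
\]
The only bridge you offer between the two is the map $\mathcal{H}\otimes_k H\to\mathcal{H}\otimes_A\mathcal{H}$, $x\otimes h\mapsto x\otimes_A F(h)$, and your argument runs it in the wrong direction: equality of the images under this map (which is all the master relation asserts) does not give equality upstairs, because the map is far from injective --- passing to $\otimes_A$ identifies $t_l(c)\,x\otimes_A y$ with $x\otimes_A s_l(c)\,y$. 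Worse, this identification makes the master relation nearly contentless for your purpose: the image of the left-hand side of (PRHCA3) collapses to $1_{\mathcal{H}}\otimes_A t_l(a)$ by the $\otimes_A$-relations alone (move $t_l(a^{[0]})$ across the tensor sign as $s_l(a^{[0]})$ and use $\Pi\circ\prho=t_l$), so in $\otimes_A$ one cannot distinguish the two sides of (PRHCA3) at all; the positional information about where the $a^{[0]}$-dependence sits is exactly what $\otimes_A$ forgets. The lifting you flag as ``the main obstacle'' (injectivity of $\Pi\otimes\mathrm{id}_H$ on $(\mathrm{im}\,\sigma)\otimes H$, together with $\mathrm{im}\,\sigma=\prho(\um)(A\otimes H)$) is the unproblematic half of the problem; the missing half is the passage from $\otimes_A$ back to $\otimes_k$, which no injectivity statement you introduce can supply.

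The paper's proof never pulls an identity back across this projection; it only pushes forward. It introduces the auxiliary map $\theta:\sigma(\mathcal{H})\otimes_A\sigma(\mathcal{H})\to\sigma(\mathcal{H})\otimes H$ given by $\theta(a1^{[0]}\otimes h1^{[1]}\otimes_A b1^{[0']}\otimes k1^{[1']})=ab^{[0]}1^{[0]}\otimes hb^{[1]}1^{[1]}\otimes k$, which is well defined \emph{out of} the tensor product over $A$ (the $A$-content of the second leg is re-expressed through $\prho$ and absorbed into the first leg, leaving a pure $k$-tensor factor of $H$), and proves the operator identity $\theta\circ(\sigma\otimes\sigma)\circ\Delta_{\mathcal{H}}\circ\Pi=((\sigma\circ\Pi)\otimes I)\circ(I\otimes\Delta_H)$ by evaluating on an \emph{arbitrary} element $a\otimes h\in A\otimes H$, not merely on elements in the image of $\prho$. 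Composing with $\sigma\circ t_l$ and using $(\prho\otimes I)\circ\prho=\theta\circ(\sigma\otimes\sigma)\circ\Delta_{\mathcal{H}}\circ t_l$, together with the fact that $\sigma\circ\Pi$ is multiplication by the idempotent $\prho(\um)$, yields (PRHCA3) directly. To repair your outline you would need a device of exactly this type --- a well-defined map out of $\mathcal{H}\otimes_A\mathcal{H}$ with values in a $k$-tensor product --- rather than an injectivity claim for a map into $\otimes_A$.
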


\begin{proof} The map $F$ induces a right $H$-module structure on $\mathcal{H}$, given by $x\cdot h =xF(h)$. Let $\sigma :\mathcal{H} \rightarrow A\otimes H$ be the algebra map such that $\Pi \circ \sigma =\mbox{Id}_{\mathcal{H}}$. The fact that both $\Pi$ and $\sigma$ are morphisms of $A-H$ bimodules. Then for any element 
\[
x=\sum_i s(a^i) F(h^i) \in \mathcal{H} ,
\]
we have the following expression for $\sigma (x)$,
\[
\sigma (x) =\sum_i  \sigma (s(a^i)F(h^i)) =\sum_i \sigma (a^i \cdot 1_{\mathcal{H}} \cdot h^i) =\sum_i a^i \cdot \sigma (1_{\mathcal{H}} )\cdot h^i .
\]
Writing $\sigma (1_{\mathcal{H}})=1^{[0]} \otimes 1^{[1]}$, which is an idempotent in $A\otimes H$, we have that any element in the image of $\sigma$ is of the form
\[
\sigma (x) =\sum_i a^i 1^{[0]} \otimes h^i1^{[1]} .
\]
The projection $\sigma \circ \Pi : A\otimes H \rightarrow \sigma (\mathcal{H})$ is implemented by the multiplication by the idempotent 
$\sigma (1_{\mathcal{H}})=1^{[0]}\otimes 1^{[1]}$. 

For any $a\in A$ we have, obviously, $\sigma (s(a)) =a1^{[0]}\otimes 1^{[1]}$ and let us denote 
\[
\prho=t\circ\sigma:A\to A\ot H,\ \prho (a)= \sigma (t(a))=a^{[0]}\otimes a^{[1]} .
\]
The morphism $\prho$ is automatically an algebra morphism, because $\sigma$ and $t$ are morphisms of algebras. We shall prove that this morphism $\prho$ is a right partial coaction of the Hopf algebra $H$ on the algebra $A$. 

Let $\theta :\sigma (\mathcal{H}) \otimes_A \sigma (\mathcal{H}) \rightarrow \sigma (\mathcal{H}) \otimes H$ be the linear map given by
\[
\theta (a1^{[0]}\otimes h1^{[1]} \otimes_A b1^{[0']} \otimes k1^{[1']} )=ab^{[0]}1^{[0]} \otimes hb^{[1]}1^{[1]} \otimes k .
\]
One can easily prove that $\theta$ is also an algebra map.

In order to prove that $\prho$ is a partial coaction, the following two identities will be useful:
\begin{enumerate}
\item[(I)] $\epsilon_{\mathcal{H}} \circ \Pi =I\otimes \epsilon_H$.
\item[(II)] $\theta \circ (\sigma \otimes \sigma) \circ \Delta_{\mathcal{H}} \circ \Pi =((\sigma \circ \Pi ) \otimes I) \circ (I\otimes \Delta_H )$
\end{enumerate}

For the identity (I), take $a\otimes h\in A\otimes H$, then
\begin{eqnarray*}
\epsilon_{\mathcal{H}} \circ \Pi (a\otimes h ) & = & \epsilon_{\mathcal{H}} (s(a) F(h)) 
 =  a(\epsilon_{\mathcal{H}} (F(h))) \\
& = & a\epsilon_H (h) 
 =  (I\otimes \epsilon_H )(a\otimes h) .
\end{eqnarray*}

And for the identity (II),
\begin{eqnarray*}
\theta \circ (\sigma \otimes \sigma) \circ \Delta_{\mathcal{H}} \circ \Pi (a\otimes h) 
& = & \theta \circ (\sigma \otimes \sigma) \circ \Delta_{\mathcal{H}} (s(a)F(h)) 
 =  a\cdot (\theta \circ (\sigma \otimes \sigma ) \circ \Delta_{\mathcal{H}} (F(h))) \\
& = & a\cdot \theta (1^{[0]}\otimes 1^{[1]} h_{(1)} \otimes_A 1^{[0']} \otimes h_{(2)} 1^{[1']}) 
 =  a 1^{[0]}\otimes 1^{[1]} h_{(1)} \otimes h_{(2)} \\
& = & \sigma \circ \Pi (a\otimes h_{(1)}) \otimes h_{(2)} 
 =  ((\sigma \circ \Pi ) \otimes I) \circ (I \otimes \Delta_H ) (a\otimes h) .
\end{eqnarray*}

Then we are in position to verify the axioms of a right partial coaction for the map $\prho =\sigma \circ t :A\rightarrow \sigma (\mathcal{H})\subseteq A\otimes H$. The axiom (PRHCA1) is automatically satisfied because both $\sigma$ and $t$ are algebra maps, therefore $\prho$ is an algebra map. 

For the axiom (PRHCA2) take $a\in A$, then
\begin{eqnarray*}
(I\otimes \epsilon_H )\circ \prho (a) & = & (I \otimes \epsilon_H ) \circ \sigma (t(a)) 
 =  \epsilon_{\mathcal{H}} \circ \Pi \circ \sigma (t(a)) \\
& = & \epsilon_{\mathcal{H}} (t(a)) 
= a .
\end{eqnarray*}

For the axiom (PRHCA3) we first note that $(\prho \otimes I) \circ \prho =\theta \circ (\sigma \otimes \sigma ) \circ \Delta_{\mathcal{H}} \circ t$. Indeed, for $a\in A$
\begin{eqnarray*}
\theta \circ (\sigma \otimes \sigma ) \circ \Delta_{\mathcal{H}} (t(a)) 
& = & \theta \circ (\sigma \otimes \sigma )  (1_{\mathcal{H}}\otimes_A  t(a) ) 
 =  \theta (1^{[0]} \otimes 1^{[1]} \otimes_A a^{[0]} \otimes a^{[1]} ) \\
& = & 1^{[0]}a^{[0][0]} \otimes 1^{[1]}a^{[0][1]} \otimes a^{[1]} 
 =  a^{[0][0]} \otimes a^{[0][1]} \otimes a^{[1]} \\
& = & (\prho \otimes I) \circ \prho (a) .
\end{eqnarray*}
On the other hand,
\begin{eqnarray*}
\theta \circ (\sigma \otimes \sigma ) \circ \Delta_{\mathcal{H}} (t(a)) & = &  \theta \circ (\sigma \otimes \sigma ) \circ \Delta_{\mathcal{H}} \circ \Pi \circ \sigma (t(a)) 
 =  ((\sigma \circ \Pi ) \otimes I) \circ (I \otimes \Delta_H ) \circ \sigma (t(a)) \\
& = & ((\sigma \circ \Pi ) \otimes I) \circ (I \otimes \Delta_H ) \circ \prho (a) 
 =  (\sigma (1_{\mathcal{H}}) \otimes 1_H ) [(I\otimes \Delta_H)\circ \prho (a)] \\
& = & (\prho (1_A )\otimes 1_H )[(I\otimes \Delta_H)\circ \prho (a)].
\end{eqnarray*}
Therefore, $(\prho \otimes I) \circ \prho (a)=(\prho (1_A )\otimes 1_H )[(I\otimes \Delta_H)\circ \prho (a)]$ for any $a\in A$, and $\prho$ is a right partial coaction.

If the partial coaction $\prho =\sigma \circ t$ is global, then $\prho (1_A )=1^{[0]} \otimes 1^{[1]}=1_A \otimes 1_H$. Then we have for any $a\otimes h \in A\otimes H$
\[
\sigma \circ \pi (a\otimes h) =a1^{[0]}\otimes h1^{[1]} =a\otimes h.
\]
Therefore $\sigma =\Pi^{-1}$, which implies that $\mathcal{H} \cong A\otimes H$.

On the other hand, if $\Pi :A\otimes H\rightarrow \mathcal{H}$ is an isomorphism, then $\sigma \circ \Pi =\mbox{Id}_{A\otimes H}$ and
\[
(\prho \otimes I)\circ \prho (a) =\theta \circ (\sigma \otimes \sigma ) \circ \Delta_{\mathcal{H}} (t(a)) = ((\sigma \circ \Pi ) \otimes I) \circ (I \otimes \Delta_H ) \circ \prho (a)= (I \otimes \Delta_H ) \circ \prho (a).
\]
Therefore, the partial coaction $\prho$ is global.
\end{proof}

\subsubsection{A duality result}

Given two dually paired Hopf algebras, $H$ and $K$, a right partial comodule algebra $A$ is automatically a left partial $H$-module algebra. In the special case when $H$ is a  co-commutative Hopf algebra, and consequently $K$ is commutative, and $A$ is a commutative algebra, then the duality between partial actions and partial coactions can be viewed as a duality between bialgebroids. In order to make this statement more precise, let us define what is a skew pairing between left $A$ bialgebroids (for $A$ not necessarily commutative).

\begin{defi} (See \cite{S}) Let $A$ be a $k$-algebra, $(\Lambda , s,t,\td ,\te )$ and $(L, s,t, \ud , \ue)$ be two left $A$-bialgebroids. A skew paring between $\Lambda$ and $L$ is a $k$-linear map $\llangle \, | \, \rrangle :\Lambda \otimes_k L \rightarrow A$ satisfing
\begin{enumerate}
\item[(SP1)] $\llangle s(a)t(b) \xi s(c)t(d) | \ell \rrangle e = a \llangle \xi | s(c)t(e) \ell s(d) t(b) \rrangle$ for every $\xi \in \Lambda$, $\ell \in L$, and  $a,b,c,d,e\in A$;
\item[(SP2)] $\llangle \xi | \ell m \rrangle = \llangle \xi_{(1)} | \ell 
t( \llangle \xi_{(2)} |m \rrangle ) \rrangle$, for every $\xi\in \Lambda$ and $\ell ,m\in L$;
\item[(SP3)] $\llangle \xi \zeta | \ell \rrangle = \llangle \xi s( \llangle \zeta | \ell_{(1)} \rrangle ) | \ell_{(2)} \rrangle$, for every $\xi ,\zeta \in \Lambda$ and $\ell \in L$;
\item[(SP4)] $\llangle \xi | 1_L \rrangle =\te (\xi )$ for every $\xi \in \Lambda$;
\item[(SP5)] $\llangle 1_{\Lambda} | \ell \rrangle =\ue (\ell )$ for every $\ell \in L$.
\end{enumerate}
\end{defi}

Then we have the following duality result. 

\begin{thm} Let $H$ be a co-commutative Hopf algebra, $K$ be a commutative Hopf algebra with a dual pairing $\langle \, , \, \rangle :H\otimes K \rightarrow k$, and let $A$ be a commutative algebra which is, at the same time, a left partial $H$-module algebra and a right partial $K$-comodule algebra with the compatibility condition
$h\cdot a =a^{[0]} \langle h,a^{[1]} \rangle$. Then the map 
\[
\begin{array}{rccc} \llangle  \, , \, \rrangle : &  A\underline{\otimes} K  \otimes_k \underline{A\# H}& \rightarrow & A \\
\, &  (a1^{[0]}\otimes \xi 1^{[1]}) \otimes_k (b\# h) & \mapsto & ab(h_{(1)}\cdot 1_A)\langle h_{(2)},\xi \rangle 
\end{array}
\]
is a skew pairing between the $A$ bialgebroids $A\underline{\otimes} K$ and $\underline{A\# H}$. 
\end{thm}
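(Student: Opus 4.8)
The plan is to verify the five axioms (SP1)--(SP5) directly. Throughout I write a generic element of $A\underline{\otimes}K$ as $\xi=a1^{[0]}\otimes x1^{[1]}$ with $x\in K$ (to reserve $\xi$ for elements of $\Lambda$ as in the definition), and I will use repeatedly: the compatibility relation $h\cdot a=a^{[0]}\langle h,a^{[1]}\rangle$ to pass between the action and the coaction; the multiplicativity of the Hopf pairing in each variable; the partial-action absorption rules $(h_{(1)}\cdot a)(h_{(2)}\cdot\um)=h\cdot a$ and $(h_{(1)}\cdot\um)(h_{(2)}\cdot\um)=h\cdot\um$, together with (PLA2), (PLA3'); and crucially the cocommutativity of $H$ and the commutativity of $A$ and $K$, which let me permute Sweedler legs and reorder factors at will. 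Recall also that on $\underline{A\#H}$ the source and target coincide, $s_l=t_l=(\,\cdot\,)\#1_H$, while on $A\underline{\otimes}K$ one has $s(a)=a1^{[0]}\otimes 1^{[1]}$ and $t(a)=\prho(a)=a^{[0]}\otimes a^{[1]}$.

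First I would check that the formula descends to the reduced tensor products. Since $a\# h=a(h_{(1)}\cdot\um)\# h_{(2)}$ and $\prho(\um)$ is idempotent, one verifies that the formula is insensitive both to the defining relation of $\underline{A\#H}$ and to multiplication by $\prho(\um)$ on the $A\underline{\otimes}K$ factor; the verification uses exactly the absorption rule and cocommutativity of $H$. It is then convenient to record the equivalent form $\llangle a1^{[0]}\otimes x1^{[1]}\mid b\# h\rrangle=ab\,1^{[0]}\langle h,x1^{[1]}\rangle$, obtained by rewriting $h_{(1)}\cdot\um=1^{[0]}\langle h_{(1)},1^{[1]}\rangle$ and using cocommutativity.

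Axioms (SP4) and (SP5) are immediate. Pairing $\xi$ with $1_{\underline{A\#H}}=\um\#1_H$ and using (PLA1) together with $\langle 1_H,x\rangle=\epsilon_K(x)$ returns $a\epsilon_K(x)=\Tepsilon(\xi)$; pairing $1_{A\underline{\otimes}K}=\prho(\um)$ with $b\# h$ and using $\langle h,1_K\rangle=\epsilon_H(h)$ returns $b(h\cdot\um)=\underline{\epsilon}_l(b\# h)$.

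For (SP2) I would expand $\ell m=b(h_{(1)}\cdot c)\# h_{(2)}g$ by \eqref{propsmash2}, so that the left-hand side becomes $ab(h_{(1)}\cdot c)(h_{(2)}g_{(1)}\cdot\um)\langle h_{(3)},x_{(1)}\rangle\langle g_{(2)},x_{(2)}\rangle$ after splitting the pairing. For the right-hand side I compute $\llangle\xi_{(2)}\mid m\rrangle=c(g_{(1)}\cdot\um)\langle g_{(2)},x_{(2)}\rangle$ using $\TDelta$, apply $t(\,\cdot\,)=(\,\cdot\,)\#1_H$, multiply by $\ell$, and pair with $\xi_{(1)}$; after (PLA2) and (PLA3') this produces the same monomials but carrying a surplus idempotent $(h_{(3)}\cdot\um)$ manufactured by the partial coproduct. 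The decisive move is that this superfluous factor is reabsorbed into the genuine action term: reordering the $h$-legs by cocommutativity and merging $(h_{(1)}\cdot c)(h_{(2)}\cdot\um)=h_{(1)}\cdot c$ recovers precisely the left-hand side. Axiom (SP3) is entirely analogous, using instead the comultiplication $\underline{\Delta}_l(b\# h)=(b\# h_{(1)})\otimes^l_A(\um\# h_{(2)})$ of $\underline{A\#H}$ and the source $s$ of $A\underline{\otimes}K$; the two surplus idempotents there merge via $(h_{(1)}\cdot\um)(h_{(2)}\cdot\um)=h\cdot\um$ and a cocommutative reshuffle, and the pairing splitting $\langle h_{(2)},x\rangle\langle h_{(3)},x'\rangle$ is rearranged by cocommutativity.

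Finally, (SP1) is the most laborious. I would first multiply out $s(a)t(b)\xi s(c)t(d)$ inside the commutative algebra $A\underline{\otimes}K$, noting that each $t=\prho$ contributes coaction legs $b^{[0]}\otimes b^{[1]}$ and $d^{[0]}\otimes d^{[1]}$ (and that $\prho(b)\prho(\um)=\prho(b)$ simplifies the idempotents); pairing with $\ell$ and invoking compatibility turns these legs into the actions $h\cdot b$ and $h\cdot d$. On the other side, since $s=t=(\,\cdot\,)\#1_H$ on $\underline{A\#H}$ one gets $s(c)t(e)\ell s(d)t(b)=ce\,p\,(h_{(1)}\cdot(db))\# h_{(2)}$, and pairing with $\xi$ yields the same monomials up to a surplus $(h_{(2)}\cdot\um)$. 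Matching the two sides is then bookkeeping: convert every coaction leg to an action by compatibility, absorb the idempotents with the merge rule, and use commutativity of $A$ and cocommutativity of $H$ to align $(h_{(1)}\cdot b)(h_{(2)}\cdot d)=(h_{(1)}\cdot d)(h_{(2)}\cdot b)$. The main obstacle throughout is exactly this proliferation of partial-action idempotents $h\cdot\um$ (and of the coaction idempotent $\prho(\um)$): at face value the two sides of each axiom differ by such a factor, and the crux is to recognize that every surplus idempotent is reabsorbed into a true action term via $(h_{(1)}\cdot a)(h_{(2)}\cdot\um)=h\cdot a$ after a cocommutative permutation of the Sweedler indices. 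I expect (SP1) to carry the heaviest bookkeeping, with the well-definedness check being the first place where cocommutativity of $H$ becomes indispensable.
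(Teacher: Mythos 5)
Your proposal is correct and takes essentially the same route as the paper's own proof: a direct verification of (SP1)--(SP5), beginning with well-definedness and the compact reformulation $\llangle a1^{[0]}\otimes x 1^{[1]} \mid b\# h\rrangle = ab\,1^{[0]}\langle h, x1^{[1]}\rangle$, then for each axiom expanding with the correct source/target/coproduct maps of the two bialgebroids, converting coaction legs into actions via the compatibility condition, and reabsorbing the surplus idempotents $h\cdot \um$ through $(h_{(1)}\cdot a)(h_{(2)}\cdot \um)=h\cdot a$ together with commutativity of $A$ and cocommutativity of $H$. This is precisely the mechanism the paper uses (its treatment of (SP1) merely pre-packages your five parameters into three by commutativity of $A\underline{\otimes}K$), so there is nothing substantive to add.
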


\begin{proof} First, note that thanks to the compatibility condition between the left partial $H$-module structure and the right partial $K$-comodule structure on $A$, we have
\[
\llangle a1^{[0]} \otimes \xi 1^{[1]} | b\# h \rrangle =ab1^{[0]} \langle h, \xi 1^{[1]} \rangle 
\]
. It is easy to see that the map $\llangle \, , \, \rrangle$ is a well defined, indeed, as we know, in the partial smash product the equality $b\# h =b(h_{(1)}\cdot 1_A )\# h_{(2)}$ holds, then, for any $a1^{[0]}\otimes \xi 1^{[1]} \in A\underline{\otimes}K$ we have
\beqnast 
\llangle a1^{[0]} \otimes \xi 1^{[1]} | b(h_{(1)}\cdot 1_A )\# h_{(2)}\rrangle & = & ab(h_{(1)}\cdot 1_A )(h_{(2)} \cdot 1_A )\langle h_{(3)} ,\xi \rangle \\
&&\hspace{-3cm} =  ab(h_{(1)}\cdot 1_A)\langle h_{(2)},\xi \rangle 
 =  \llangle a1^{[0]} \otimes \xi 1^{[1]} | a\# h \rrangle .
\eqnast
Let us verify the axiom (SP1). As the bialgebroid $A\underline{\otimes} K$ is commutative, this enables a simplification, because, for each $a_1 , a_2 \in A$ and $x\in A\underline{\otimes} K$ we have $s(a_1 )t(a_2) x= xs(a_1 )t(a_2 )$. Then, taking $a_1 , a_2, a_3 \in A$, $b1^{[0]}\otimes \xi 1^{[1]} \in A\underline{\otimes} K$ and $c\# h\in \underline{A\# H}$, we have
\beqnast
\llangle s(a_1 )t(a_2)(b1^{[0]}\otimes \xi 1^{[1]})|c\# h \rrangle a_3 & = & 
\llangle (a_1 ba_2^{[0]}1^{[0]}\otimes  a_2^{[1]} \xi 1^{[1]})|c\# h \rrangle a_3 \\
&&\hspace{-4cm} =  a_1 bc a_3 (h_{(1)}\cdot 1_A ) a_2^{[0]} \langle h_{(2)} ,  a_2^{[1]} \xi \rangle 
 =  a_1 bc a_3 (h_{(1)}\cdot 1_A ) a_2^{[0]} \langle h_{(2)} ,  a_2^{[1]} \rangle \langle h_{(3)} ,\xi \rangle \\
&&\hspace{-4cm} =  a_1 bc a_3 (h_{(1)}\cdot 1_A ) (h_{(2)}\cdot a_2 ) \langle h_{(3)} ,\xi \rangle 
 =  a_1 bc a_3 (h_{(1)}\cdot a_2 ) \langle h_{(2)} ,\xi \rangle \\
&&\hspace{-4cm} =  a_1 \llangle b1^{[0]}\otimes \xi 1^{[1]} | a_3 c ((h_{(1)}\cdot a_2 ) \# h_{(2)} \rrangle 
 =  a_1 \llangle b1^{[0]}\otimes \xi 1^{[1]} | s( a_3 )(c\# h)(a_2 \# 1_H ) \rrangle \\
&&\hspace{-4cm} =  a_1 \llangle b1^{[0]}\otimes \xi 1^{[1]} | s( a_3 )(c\# h) t(a_2 ) \rrangle 
\eqnast
The last expression is also equal to $\llangle b1^{[0]}\otimes \xi 1^{[1]} | s(a_1) 
s( a_3 )(c\# h) t(a_2 ) \rrangle$.

For the axiom (SP2), take $a1^{[0]}\otimes \xi 1^{[1]} \in A\underline{\otimes} K$, and $b\# h , c\# k \in \underline{A\# H}$, then
\beqnast
\llangle a1^{[0]}\otimes \xi 1^{[1]} |(b\# h) (c\# k) \rrangle & = &  \llangle a1^{[0]}\otimes \xi 1^{[1]} |(b (h_{(1)}\cdot c)\# h_{(2)}k) \rrangle \\
&&\hspace{-4cm} =  ab(h_{(1)}\cdot c)(h_{(2)}k_{(1)}\cdot 1_A ) \langle h_{(2)}k_{(1)}, \xi \rangle 
 =  ab(h_{(1)}\cdot (c(k_{(1)}\cdot 1_A ))) \langle h_{(2)}k_{(1)}, \xi \rangle ,
\eqnast
while, on the other hand
\beqnast 
&&\hspace{-2cm} \llangle a1^{[0]}\otimes \xi_{(1)} 1^{[1]} |(b\# h) t( \llangle 1^{[0']} \otimes 
\xi_{(2)} 1^{[1']} |(c\# k) \rrangle ) \rrangle \\
& = & \llangle a1^{[0]}\otimes \xi_{(1)} 1^{[1]} |(b\# h) ( c(k_{(1)}\cdot 1_A) \# 1_H )  \rrangle \langle k_{(2)} ,\xi_{(2)} \rangle \\
& = & \llangle a1^{[0]}\otimes \xi_{(1)} 1^{[1]} |(b(h_{(1)}\cdot ( c(k_{(1)}\cdot 1_A))) \# h_{(2)} )  \rrangle \langle k_{(2)} ,\xi_{(2)} \rangle \\
& = &  ab(h_{(1)}\cdot ( c(k_{(1)}\cdot 1_A))) \langle h_{(2)} ,\xi_{(1)}  \rangle \langle k_{(2)} ,\xi_{(2)} \rangle \\
& = & ab(h_{(1)}\cdot (c(k_{(1)}\cdot 1_A ))) \langle h_{(2)}k_{(1)}, \xi \rangle .
\eqnast
For the axiom (SP3), take $a1^{[0]}\otimes \xi 1^{[1]} \, , \,  b1^{[0]}\otimes \zeta 1^{[1]}\in A\underline{\otimes} K$, and $c\# h\in \underline{A\# H}$, then
\beqnast
\llangle (a1^{[0]}\otimes \xi 1^{[1]})(b1^{[0']}\otimes \zeta 1^{[1']})|c\# h \rrangle 
& = & \llangle ab1^{[0]}\otimes \xi \zeta 1^{[1]}|c\# h \rrangle
 =  abc (h_{(1)}\cdot 1_A )\langle h_{(2)} ,\xi \zeta \rangle ,
\eqnast
while, on the other hand
\beqnast
&&\hspace{-.8cm}  \llangle a1^{[0]}\otimes \xi 1^{[1]} s( \llangle b1^{[0']}\otimes \zeta 1^{[1']} | c\# h_{(1)} \rrangle )| 1_A \# h_{(2)} \rrangle
 =  \llangle abc (h_{(1)} \cdot 1_A) 1^{[0]}  \otimes \xi 1^{[1]}  | 1_A \# h_{(3)} \rrangle  \langle h_{(2)} ,\zeta \rangle \\
&& =  \llangle abc (h_{(1)} \cdot 1_A) 1^{[0]}  \otimes \xi 1^{[1]}  | 1_A \# h_{(2)} \rrangle  \langle h_{(3)} ,\zeta \rangle 
 =  abc (h_{(1)} \cdot 1_A) \langle h_{(2)} ,\xi \rangle  \langle h_{(3)} ,\zeta \rangle \\
&& =  abc (h_{(1)}\cdot 1_A )\langle h_{(2)} ,\xi \zeta \rangle .
\eqnast
For (SP4), take $a1^{[0]}\otimes \xi 1^{[1]} \in A\underline{\otimes} K$, then
\[
\llangle a1^{[0]}\otimes \xi 1^{[1]} |1_A \# 1_H \rrangle = a\langle 1_H , \xi \rangle =a\epsilon (\xi ) =\te (a1^{[0]}\otimes \xi 1^{[1]}) .
\]
Finally, for (SP5) , take $a\# h \in \underline{A\# H}$, then
\[
\llangle 1^{[0]} \otimes 1^{[1]} |a\# h \rrangle =a(h_{(1)}\cdot 1_A )\langle h_{(2)} , 1_K \rangle =a(h_{(1)}\cdot 1_A)\epsilon (h_{(2)}) =a( h\cdot 1_A) =\ue (a\# h) .
\]

Therefore, the map $\llangle \, , \, \rrangle$ is a skew pairing between these two $A$-bialgebroids.
\end{proof}

\section{Partial module coalgebras}\selabel{modulecoalgebras}

\subsection{Definition and examples}

\begin{defi} Let $H$ be a bialgebra. A $k$ coalgebra $C$ is said to be a left partial $H$-module coalgebra if there is a linear map
\[
\begin{array}{rccc} \cdot : & H\otimes C & \rightarrow & C \\
\, & h\otimes c & \mapsto & h\cdot c 
\end{array}
\]
satisfying the following conditions:
\begin{enumerate}
\item[(PLHMC1)] For all $h\in H$ and $c\in C$, $\Delta (h\cdot c)= (h_{(1)} \cdot c_{(1)}) \otimes (h_{(2)} \cdot c_{(2)})$;
\item[(PLHMC2)] $1_H \cdot c =c$, for all $c\in C$;
\item[(PLHMC3)] For all $h,k\in H$ and $c\in C$, 
\[
h\cdot (k\cdot c) =  (hk_{(1)}\cdot c_{(1)}) \epsilon (k_{(2)}\cdot c_{(2)}) .
\]
\end{enumerate}
A partial module coalgebra is called symmetric, if, in addition, it satisfies
\begin{enumerate}
\item[(PLHMC3')] For all $h,k\in H$ and $c\in C$, 
\[
h\cdot (k\cdot c) =  \epsilon (k_{(1)}\cdot c_{(1)}) (hk_{(2)}\cdot c_{(2)}). 
\]
\end{enumerate}
\end{defi}

In a symmetric way, it is possible to define the notion of a right partial $H$-module coalgebra. We have the following immediate results.

\begin{prop} Let $H$ be a bialgebra and $C$ be a left partial $H$-module coalgebra, then, 
\begin{enumerate}[(i)]
\item for every $h\in H$ and $c\in C$ we have
\begin{equation}
\label{epsilon}
h\cdot c =\epsilon (h_{(1)}\cdot c_{(1)})(h_{(2)}\cdot c_{(2)}) =(h_{(1)}\cdot c_{(1)})\epsilon (h_{(2)}\cdot c_{(2)}) ;
\end{equation}
\item 
for every $h\in H$ and $c\in C$ we have
\begin{equation}
\label{epsilon2}
\epsilon (h\cdot c) =\epsilon (h_{(1)}\cdot c_{(1)})\epsilon (h_{(2)}\cdot c_{(2)});
\end{equation}
\item $C$ is a left $H$ module coalgebra if, and only if, for every $h\in H$ and $c\in C$ we have $\epsilon (h\cdot c )=\epsilon (h) \epsilon (c)$.
\end{enumerate}
\end{prop}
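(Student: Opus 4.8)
The plan is to extract parts (i) and (ii) directly from the comultiplicativity axiom (PLHMC1) by applying the counit of $C$, and to deduce the characterization (iii) from the partial associativity axiom (PLHMC3) together with the counits of $H$ and $C$. In particular, (i) and (ii) will use only (PLHMC1), not the associativity axioms.

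For the second equality in (i), I would start from (PLHMC1), namely $\Delta(h\cdot c)=(h_{(1)}\cdot c_{(1)})\otimes(h_{(2)}\cdot c_{(2)})$, and apply $I\otimes\epsilon$ to both sides. The left-hand side collapses to $h\cdot c$ by the counit axiom of the coalgebra $C$, while the right-hand side becomes $(h_{(1)}\cdot c_{(1)})\epsilon(h_{(2)}\cdot c_{(2)})$, which is the claimed identity. Applying instead $\epsilon\otimes I$ and using the counit on the other tensor leg yields the first identity $\epsilon(h_{(1)}\cdot c_{(1)})(h_{(2)}\cdot c_{(2)})$. For (ii), I would apply $\epsilon\otimes\epsilon$ to (PLHMC1): the left-hand side equals $\epsilon(h\cdot c)$, since $(\epsilon\otimes\epsilon)\circ\Delta=\epsilon$ in any coalgebra (apply $\epsilon$ to the counit relation $x_{(1)}\epsilon(x_{(2)})=x$), and the right-hand side is exactly $\epsilon(h_{(1)}\cdot c_{(1)})\epsilon(h_{(2)}\cdot c_{(2)})$.

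For (iii), the forward implication is immediate: if $C$ is a genuine left $H$-module coalgebra, then $\epsilon$ is $H$-linear with respect to the trivial module structure on $k$ given by $\epsilon_H$, so $\epsilon(h\cdot c)=\epsilon(h)\epsilon(c)$. For the converse, assuming $\epsilon(h\cdot c)=\epsilon(h)\epsilon(c)$, I would substitute this into (PLHMC3), obtaining $h\cdot(k\cdot c)=(hk_{(1)}\cdot c_{(1)})\epsilon(k_{(2)}\cdot c_{(2)})=(hk_{(1)}\cdot c_{(1)})\epsilon(k_{(2)})\epsilon(c_{(2)})$. Pulling the scalars $\epsilon(k_{(2)})$ and $\epsilon(c_{(2)})$ back into the bilinear action and invoking the counit of $H$ (so that $k_{(1)}\epsilon(k_{(2)})=k$) on the $H$-leg and the counit of $C$ (so that $c_{(1)}\epsilon(c_{(2)})=c$) on the $C$-leg reduces the expression to $(hk)\cdot c$. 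Together with (PLHMC2) this shows the action is an honest left $H$-module action, and combining this with (PLHMC1) and the hypothesis on $\epsilon$ recovers the structure of a module coalgebra.

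None of the three parts presents a genuine obstacle; the only point requiring a little care is the bookkeeping in the converse of (iii), where one must redistribute the two counit scalars into the action and apply the counit axioms of $H$ and $C$ on the appropriate Sweedler legs so that both summations telescope correctly to $(hk)\cdot c$.
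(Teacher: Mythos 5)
Your proposal is correct and follows essentially the same route as the paper: parts (i) and (ii) by applying $\epsilon\otimes I$, $I\otimes\epsilon$, and $\epsilon\otimes\epsilon$ to (PLHMC1), and the nontrivial direction of (iii) by substituting the hypothesis $\epsilon(h\cdot c)=\epsilon(h)\epsilon(c)$ into (PLHMC3) and collapsing the counit scalars to obtain $h\cdot(k\cdot c)=hk\cdot c$. The only cosmetic difference is that the paper deduces (ii) from (i) rather than applying $\epsilon\otimes\epsilon$ directly, which amounts to the same computation.
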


\begin{proof} \ul{(i)}. This follows imediatly from the axiom (PLHMC1). By applying $\epsilon \otimes I$, we obtain the first equality, and by applying $I\otimes \epsilon $ we obtain the second equality.\\
\ul{(ii)} follows from (i).\\
\ul{(iii)}.
Suppose that $\epsilon (h\cdot c )=\epsilon (h) \epsilon (c)$ for every $h\in H$ and $c\in C$, then, by (PLHMC3) we have
\beqnast
h\cdot (k\cdot c) & = &  (hk_{(1)}\cdot c_{(1)}) \epsilon (k_{(2)}\cdot c_{(2)})\\
& = &  (hk_{(1)}\cdot c_{(1)}) \epsilon (k_{(2)})\epsilon ( c_{(2)})\\ 
& = & hk \cdot c ,
\eqnast
therefore, $C$ is a left $H$-module coalgebra. The converse is trivial.
\end{proof}

\begin{remark}
It is important to notice that a different notion of ``partial module coalgebra'' was introduced earlier in the arXiv-version of \cite{caen06}. There, a coalgebra $C$ was said to be a (right) $H$-module coalgebra if and only if there exists a map $C\ot H\to H, c\ot h\mapsto c\cdot h$ such that the map $H\ot C\to C\to H, c\ot h\mapsto h_{(1)}\ot c\cdot h_{(2)}$ is a partial entwining structure. Clearly, this definition, as does ours, generalizes usual module coalgebras. However, the definition introduced in this note is motivated by the examples of partial actions of groups on coalgebras and by duality results with partial module algebras as illustrated below. 
\end{remark}

In the case of partial actions of a group on an algebra, we found that the partial actions of a group $G$ on an algebra $A$ that are in correspondence with partial actions of the Hopf algebra $kG$ on $A$, are those for which the ideals $A_g$ are generated by central idempotents $1_g$. The existence of a central idempotent ensures that there is
an algebra map $A\to A_g$ that is a left inverse for the inclusion map $A_g\to A$, which in turn is a multiplicative map; and ensures that $A\cong A_g\times A'_g$, for some other unitary algebra $A'_g$.
We will now prove a similar result for partial module coalgebras.

\begin{lemma}\lelabel{propertiesprojection}
For a coalgebra $C$ and $D$ with a coalgebra map $\iota:D\to C$, the following are equivalent:
\begin{enumerate}[(i)]
\item there exists a coalgebra $D'$ and a coalgebra map $\iota':D' \to C$ such that the universal morphism $J: D\coprod D'\to C$ is an isomorphism of coalgebras (where coproduct of $D$ and $D'$ is considered in the category of coalgebras);
\item there is a $k$-linear map $P:C\to D$ that satisfies 
\begin{itemize}
\item $P\circ \iota=id_D$,
\item  $\Delta \circ P =(P \otimes P )\circ \Delta$,
\item $\iota (P (c)) =c_{(1)} \epsilon_D (P (c_{(2)})) =\epsilon_D ( P (c_{(1)})) c_{(2)}$,  all $c\in C$
\end{itemize}
\item There is a $k$-linear map $P:C\to D$ such that
\begin{itemize}
\item $P\circ \iota=id_D$,
\item  $\Delta \circ P =(P \otimes P )\circ \Delta$,
\item $\Delta (\iota (P (c))) =c_{(1)}\ot \iota (P (c_{(2)})) =\iota (P (c_{(1)}))\ot  c_{(2)}=\iota (P(c_{(1)}))\ot \iota (P(c_{(2)}))$,  all $c\in C$
\end{itemize}
\end{enumerate}
\end{lemma}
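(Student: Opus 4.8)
The plan is to organize the whole equivalence around the endomorphism $\pi:=\iota\circ P$ of $C$ attached to a linear retraction $P$ as in (ii) or (iii). The first observation is that $P\circ\iota=\mathrm{id}_D$ forces $\pi$ to be idempotent, $\pi^2=\iota(P\iota)P=\iota P=\pi$, and that the second bullet of (ii)/(iii) says precisely that $\pi$ is a comultiplication morphism, $\Delta\circ\pi=(\pi\otimes\pi)\circ\Delta$. I would prove the cyclic chain (i)$\Rightarrow$(iii)$\Leftrightarrow$(ii) and (iii)$\Rightarrow$(i), so that $\pi$ is the bridge between the ``splitting'' datum of (i) and the ``projection'' data of (ii) and (iii).

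For (ii)$\Leftrightarrow$(iii) I would argue directly, using that $\iota$ a coalgebra map gives $\epsilon_C\circ\iota=\epsilon_D$. Starting from the three tensor identities of (iii) and applying $I\otimes\epsilon_C$ and $\epsilon_C\otimes I$ together with the counit axiom recovers $\iota P(c)=c_{(1)}\epsilon_D(P(c_{(2)}))=\epsilon_D(P(c_{(1)}))c_{(2)}$, i.e.\ (ii); conversely, writing the third bullet of (ii) as $\pi=(I\otimes e)\Delta=(e\otimes I)\Delta$ with $e:=\epsilon_D\circ P$ and reapplying $\Delta$ yields all three identities of (iii). This step is routine. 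For (i)$\Rightarrow$(iii) I would recall that the coproduct of coalgebras is the direct sum $D\oplus D'$ with block-diagonal comultiplication (no cross terms) and counit $\epsilon_D\oplus\epsilon_{D'}$. Setting $P:=p_D\circ J^{-1}$ with $p_D:D\oplus D'\to D$ the projection, one gets $P\circ\iota=\mathrm{id}_D$ at once, and since $p_D$ is the identity on the $D$-block and $0$ on the $D'$-block it is a comultiplication morphism, whence $\Delta P=(P\otimes P)\Delta$ after transport along the coalgebra isomorphism $J^{-1}$. The three identities of (iii) follow by verifying them first for the block-diagonal idempotent $q:=\mathrm{incl}_D\circ p_D$, where they are transparent, and transporting along $J$ via $\iota P=JqJ^{-1}$.

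The substance of the theorem is (iii)$\Rightarrow$(i), and here lies the main obstacle. With $\pi=\iota P$ idempotent and comultiplicative, the complementary map $e:=\mathrm{id}_C-\pi$ is again idempotent and satisfies $\pi e=e\pi=0$, so $C=\mathrm{im}(\pi)\oplus\mathrm{im}(e)$ as vector spaces, and $\mathrm{im}(\pi)=\iota(D)$ is a subcoalgebra by $\Delta\pi=(\pi\otimes\pi)\Delta$. The crux is to show that $\mathrm{im}(e)$ is also a subcoalgebra, that is $\Delta e=(e\otimes e)\Delta$. Expanding $(e\otimes e)\Delta=\Delta-(\pi\otimes I)\Delta-(I\otimes\pi)\Delta+(\pi\otimes\pi)\Delta$ and substituting the three equalities of (iii) — precisely $(\pi\otimes I)\Delta=(I\otimes\pi)\Delta=(\pi\otimes\pi)\Delta=\Delta\pi$ — collapses the right-hand side to $\Delta-\Delta\pi=\Delta e$. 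This is exactly why (iii) carries three a priori redundant identities instead of the single convolution identity of (ii): they are what guarantees that the \emph{cross terms} vanish, which is the whole point of obtaining a coproduct rather than merely a vector-space splitting.

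To finish (iii)$\Rightarrow$(i) I would set $D':=\mathrm{im}(e)$ with $\iota'$ its inclusion, and note that $\Delta=(\pi\otimes\pi)\Delta+(e\otimes e)\Delta$ exhibits $C$ as the block-diagonal direct sum of the subcoalgebras $\iota(D)$ and $D'$. Since $\iota$ is injective (as $P\iota=\mathrm{id}_D$) it is a coalgebra isomorphism onto $\mathrm{im}(\pi)$, and matching counits via $\epsilon_C\circ\iota=\epsilon_D$ and $\epsilon_{D'}=\epsilon_C|_{D'}$ shows that the universal map $J:D\coprod D'\to C$ is a bijective coalgebra morphism, hence the required isomorphism. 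I expect the verification that $e=\mathrm{id}_C-\pi$ is comultiplicative, and the accompanying check that the coproduct of coalgebras really is the cross-term-free direct sum, to be the only genuinely delicate points; everything else is bookkeeping with Sweedler notation and the counit axiom.
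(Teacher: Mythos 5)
Your proof is correct and follows essentially the same route as the paper: your complement $D'=\mathrm{im}(\mathrm{id}_C-\iota\circ P)$ coincides with the paper's choice $D'=\ker P$, and both arguments reduce (i) to the explicit block-diagonal description of the coproduct of coalgebras. The only difference is presentational: your identity $(e\otimes e)\circ\Delta=\Delta-(\pi\otimes I)\Delta-(I\otimes\pi)\Delta+(\pi\otimes\pi)\Delta=\Delta\circ e$ writes out the verification that the complement is a subcoalgebra, a step the paper leaves to the reader as ``easily verified''.
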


\begin{proof}
$\ul{(i)\Rightarrow (ii)}$. Recall  (see e.g. \cite[Proposition 1.4.19]{DNR}) that the category of coalgebras has coproducts. In particular, if $D$ and $D'$ are two coalgebras, then the coproduct $D\coprod D'$ is computed by taking the direct sum of the underlying $k$-modules $D\oplus D'$ and endowing it with the following comultiplication and counit
$$\Delta(d,d')=(d_{(1)},0)\ot (d_{(2)},0)+(0,d'_{(1)})\ot (0,d'_{(2)})$$
$$\epsilon(d,d')=\epsilon_D(d)\epsilon_{D'}(d')$$
for all $d\in D, d'\in D'$. Remark that the comultiplication of the coproduct can also be written as $\Delta(d+d')=\Delta_D(d)+\Delta_{D'}(d')$
With this structure, it is easy to verify that the canonical projection $D\oplus D'\to D$ satisfies all stated conditions.\\
$\ul{(ii)\Rightarrow (iii)}$. We only have to prove the third condition. Take any $c\in C$ and use the fact that $\iota$ is a coalgebra morphism, then we find
\begin{eqnarray*}
\Delta (\iota (P (c))) &=& \iota( P(c)_{(1)}) \ot \iota (P(c)_{(2)}) = c_{(1)}\epsilon_D( P(c)_{(2)}) \ot \iota (P(c)_{(3)})\\
&=&c_{(1)} \ot \epsilon_D( P(c)_{(2)})\iota (P(c)_{(3)}) = c_{(1)}\ot \iota\circ P(c_{(2)}).\\
\end{eqnarray*}
$\ul{(iii)\Rightarrow (i)}$. Define $D'=\ker P$. Then clearly $C\cong D\oplus D'$ as $k$-modules. Moreover, using the second and third condition one can easily verify that $\ker P$ is a subcoalgebra of $C$. For any $c\in C$, we can write $c=P(c)+c-P(c)=d+d'$ where $d=P(c)$ and $d'=c-P(c)$. Then $\Delta(c)=\Delta(d)+\Delta(d')$, and since $D$ and $D'$ are subcoalgebras, we find that $C$ is indeed the coproduct of $D$ and $D'$. 
\end{proof}

We are now ready to define a partial action of a group $G$ on a coalgebra $C$. Clearly, such a partial action should in first place be a partial action of $G$ on the underlying set $C$. More precisely, we have the following definition.\\

\begin{defi}\delabel{partactionC} (Partial group actions on coalgebras) A partial action of a group $G$ on a coalgebra $C$ consists of a family of subcoalgebras $\{ C_{g} \}_{g\in G}$ of $C$ and coalgebra isomorphisms $\{ \theta_g :C_{g^{-1}} \rightarrow C_g \}_{g\in G}$, such that
\begin{enumerate}[(i)]
\item for every $g\in G$, the coalgebra $C_g$ is a subcoalgebra-direct summand of $C$, i.e. there exists a projection $P_g:C\to C_g$ satisfying the (equivalent) conditions of \leref{propertiesprojection}
\item $C_e=C$ and $\theta_e=P_e=id_C$, where $e$ is the unit of $C$;
\item For all $g,h\in G$, we the following equations hold
\begin{eqnarray}
P_h\circ P_g&=&P_g\circ P_h \eqlabel{Pcommute}\\
\theta_{h^{-1}} \circ P_{g^{-1}}\circ P_h &=& P_{(gh)^{-1}}\circ \theta_{h^{-1}} \circ P_{h} \eqlabel{Pthetacompatible}\\
\theta_g\circ \theta_h \circ P_{h^{-1}}\circ P_{(gh)^{-1}}&=&\theta_{gh}\circ P_{h^{-1}}\circ P_{(gh)^{-1}} \eqlabel{thetacomposition}
\end{eqnarray}
\end{enumerate}
\end{defi}

\begin{remark} The three equalities envolving the projections in \deref{partactionC} were motivated by dualizing a partial action of $G$ over an algebra $A$ in which every ideal $A_g$ is generated by a central idempotent in $A$, this is expressed in the equality \equref{Pcommute}. The equation \equref{Pthetacompatible} is basically saying that $\theta_{h^{-1}}(C_h \cap C_{g^{-1}})=C_{(gh)^{-1}}\cap C_{h^{-1}}$ which is the second axiom of partial action. The equality \equref{thetacomposition} reflects the fact that for any $x\in C_{(gh)^{-1}}\cap C_{h^{-1}}$, we have $\theta_g \circ \theta_h (x) =\theta_{gh} (x)$.
\end{remark}

\begin{thm}
Let $C$ be a $k$-coalgebra and $G$ a group. Then there is a bijective correspondence between partial actions of $G$ on the coalgebra $C$ and maps $kG\ot C\to C$ that turn $C$ into a symmetric partial $kG$-module coalgebra.
\end{thm}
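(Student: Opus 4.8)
The plan is to exploit that $kG$ has a $k$-basis of group-like elements, so that giving a linear map $kG\ot C\to C$ is the same as giving a family $\{(g\cdot-)\}_{g\in G}$ of $k$-linear endomorphisms of $C$; the correspondence will then match this family with the partial-action data $(\{C_g\},\{\theta_g\})$ through the dictionary
\[
g\cdot c=\theta_g(P_{g^{-1}}(c)),\qquad\text{respectively}\qquad P_g(c)=g\cdot(g^{-1}\cdot c),\quad C_g=P_g(C),\quad \theta_g=(g\cdot-)|_{C_{g^{-1}}}.
\]
First I would treat the passage from a partial action to a module coalgebra. Setting $g\cdot c=\theta_g(P_{g^{-1}}(c))$ and extending linearly, (PLHMC2) is immediate from $C_e=C$, $\theta_e=P_e=\mathrm{id}_C$, while (PLHMC1) holds because $g\cdot-$ is the composite of the two comultiplicative maps $P_{g^{-1}}$ and $\theta_g$ (the former by \leref{propertiesprojection}(ii), the latter being a coalgebra isomorphism). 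For (PLHMC3) and its companion (PLHMC3'), the key simplification is that $\theta_k$ preserves the counit and that, by \leref{propertiesprojection}(ii), $P_{k^{-1}}(c)=c_{(1)}\epsilon(P_{k^{-1}}(c_{(2)}))=\epsilon(P_{k^{-1}}(c_{(1)}))c_{(2)}$; this collapses the right-hand side of (PLHMC3) (resp. (PLHMC3')) to $\theta_{hk}(P_{(hk)^{-1}}(P_{k^{-1}}(c)))$, and the remaining identity $\theta_h P_{h^{-1}}\theta_k P_{k^{-1}}=\theta_{hk}P_{(hk)^{-1}}P_{k^{-1}}$ is obtained by feeding $a=hk$, $b=k^{-1}$ into \equref{Pthetacompatible} and then applying \equref{thetacomposition} together with the commutation \equref{Pcommute}.

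Conversely, starting from a symmetric partial $kG$-module coalgebra I would set $P_g(c)=g\cdot(g^{-1}\cdot c)$. Using (PLHMC3) and (PLHMC3') with $h=g$, $k=g^{-1}$ one computes the two expressions $P_g(c)=c_{(1)}\epsilon(g^{-1}\cdot c_{(2)})=\epsilon(g^{-1}\cdot c_{(1)})c_{(2)}$; equivalently, $P_g$ is \emph{simultaneously} the right- and the left-convolution operator by the functional $\phi_g:=\epsilon(g^{-1}\cdot-)\in C^*$. Since $\phi_g$ is idempotent for the convolution product by \eqref{epsilon2}, $P_g$ is idempotent, and together with (PLHMC1) and the two displayed forms it verifies all conditions of \leref{propertiesprojection}, so $C_g:=P_g(C)$ is a subcoalgebra-direct summand with projection $P_g$. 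Defining $\theta_g:=(g\cdot-)|_{C_{g^{-1}}}$, the identities $g\cdot(g^{-1}\cdot-)=P_g$ and $g^{-1}\cdot(g\cdot-)=P_{g^{-1}}$ show that $\theta_g$ and $\theta_{g^{-1}}$ are mutually inverse coalgebra isomorphisms $C_{g^{-1}}\rightleftarrows C_g$ (counit preservation following from $\epsilon\circ P_{g^{-1}}=\phi_{g^{-1}}=\epsilon(g\cdot-)$), and condition (ii) of \deref{partactionC} is clear from (PLHMC2).

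It then remains to check the three relations of \deref{partactionC}(iii), and this is where I expect the main obstacle to lie. The crucial point is \equref{Pcommute}: writing $P_g$ as a right-convolution and $P_h$ as a left-convolution operator, and using that left- and right-convolution operators on a coalgebra always commute, gives $P_gP_h=P_hP_g$ at once — and this is precisely where the symmetry axiom (PLHMC3') is indispensable, since it is exactly what makes each $P_g$ two-sided. I would then translate \equref{Pthetacompatible} and \equref{thetacomposition} into identities purely in the action (after checking, via \equref{Pcommute}, that the relevant arguments lie in the domains $C_h$ and $C_{(gh)^{-1}}$ on which the $\theta$'s are defined) and verify them from (PLHMC1)--(PLHMC3') by the same convolution bookkeeping; keeping track of these domains is the most delicate part. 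Finally I would confirm that the two assignments are mutually inverse: one round trip reduces to $\theta_g(P_{g^{-1}}(c))=g\cdot c$, i.e. the partial-representation identity $g\cdot(g^{-1}\cdot(g\cdot c))=g\cdot c$, a consequence of (PLHMC3) and \eqref{epsilon}; the other reduces to recomputing $P_g(c)=g\cdot(g^{-1}\cdot c)$ from the partial-action data, which collapses because $P_{g^{-1}}$ restricts to the identity on its image $C_{g^{-1}}$ and $\theta_g\theta_{g^{-1}}=\mathrm{id}_{C_g}$.
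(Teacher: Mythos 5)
Your proposal is correct and follows essentially the same route as the paper's proof: the same dictionary $g\cdot c=\theta_g(P_{g^{-1}}(c))$ and $P_g(c)=g\cdot(g^{-1}\cdot c)$, the same reliance on \leref{propertiesprojection}, and the same three identities \equref{Pcommute}, \equref{Pthetacompatible}, \equref{thetacomposition} driving both directions; the computations you defer (deriving the latter two identities from (PLHMC1)--(PLHMC3')) are exactly the ones the paper carries out explicitly, and they do go through by the bookkeeping you describe. Two of your touches go slightly beyond the paper's text: the framing of $P_g$ as simultaneously a left- and right-convolution operator by the convolution-idempotent functional $\phi_g=\epsilon(g^{-1}\cdot -)$, which makes \equref{Pcommute} and the projection axioms immediate and isolates precisely where symmetry is indispensable, and the explicit check that the two constructions are mutually inverse, which the paper leaves to the reader.
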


\begin{proof}
Suppose that $C$ is a partial $kG$-module coalgebra. For any $g\in G$ we denote $\delta_g\in kG$ the corresponding base element. Then for any $c\in C$, we find
\begin{eqnarray*}
\delta_{g}\cdot (\delta_{g^{-1}}\cdot c)&=&\epsilon(\delta_{g^{-1}}\cdot c_{(1)}) \delta_g\delta_{g^{-1}}\cdot c_{(2)}= \epsilon(\delta_{g^{-1}}\cdot c_{(1)})  c_{(2)}=c_{(1)}\epsilon(\delta_{g^{-1}}\cdot c_{(2)})
\end{eqnarray*} 
where we used (PLHMC3) and (PLHMC3'). We then define $P_g:C\to C$, as 
\begin{equation}\label{pg}
 P_g(c)=\epsilon(\delta_{g^{-1}}\cdot c_{(1)})  c_{(2)}=c_{(1)}\epsilon (\delta_{g^{-1}} \cdot c_{(2)}),
\end{equation}
where we used the symmetry of the partial action to obtain the second expression. 
It is easy to see that these linear operators are projections, and we define $C_g={\rm Im}\, P_g$. Let us observe that $C_g$ is a subcoalgebra of $C$. 
\begin{eqnarray*}
\Delta (P_g(c)) & = & \Delta (\epsilon (\delta_g\cdot c_{(1)})  c_{(2)}) = 
\epsilon (\delta_g\cdot c_{(1)})  c_{(2)}\ot c_{(3)} 
= \epsilon(\delta_g\cdot c_{(1)})\epsilon(\delta_g\cdot c_{(2)})  c_{(3)}\ot  c_{(4)}\\
&=& \epsilon(\delta_g\cdot c_{(1)})c_{(2)}\epsilon(\delta_g\cdot c_{(3)})  \ot  c_{(4)}
= \epsilon(\delta_g\cdot c_{(1)})c_{(2)} \ot  
\epsilon(\delta_g\cdot c_{(3)}) c_{(4)}\\ 
&=& P_g(c_{(1)})\otimes P_g (c_{(2)}) \in C_g \otimes C_g .
\end{eqnarray*} 
For the counit condition, we find
\begin{eqnarray*}
P_g(c_{(1)})\epsilon (P_g (c_{(2)}))=\epsilon(\delta_g\cdot c_{(1)})  \epsilon(c_{(2)}) c_{(3)} = \epsilon (\delta_g\cdot c_{(1)}) c_{(2)} = P_g(c)
\end{eqnarray*}
and by symmetry we also have $\epsilon(P_g(c_{(1)}))P_g (c_{(2)})=P_g(c)$. 

Let us show that $P_g$ satisfies the conditions of \leref{propertiesprojection}(ii).
In order to see that 
$\Delta \circ P_g =(P_g \otimes P_g )\circ \Delta$ we use (\ref{pg}), indeed,
\beqnast
(P_g \otimes P_g )\circ \Delta (c) & = & P_g (c_{(1)}) \otimes P_g (C_{(2)}) =c_{(1)}\epsilon (\delta_{g^{-1}}\cdot c_{(2)})\otimes \epsilon (\delta_{g^{-1}} \cdot c_{(3)} ) c_{(4)} \\
& = & c_{(1)}\epsilon (\delta_{g^{-1}}\cdot c_{(2)})\otimes c_{(3)} =\epsilon (\delta_{g^{-1}}\cdot c_{(1)}) c_{(2)}\otimes c_{(3)} \\
& = & \Delta (\epsilon (\delta_{g^{-1}}\cdot c_{(1)}) c_{(2)}) =\Delta (P_g (c)) .
\eqnast
Finally, for all $g\in G$ and $c\in C$ we have
\[
\epsilon (P_g (c_{(1)})) c_{(2)}  =  \epsilon (\delta_{g^{-1}}\cdot c_{(1)})\epsilon (c_{(2)}) c_{(3)} =\epsilon (\delta_{g^{-1}}\cdot c_{(1)})c_{(2)}=P_g (c).
\]
Similarly, we can prove that $P_g (c) =c_{(1)}\epsilon (P_g (c_{(2)})) $. 

Let us check that the projections mutually commute, i.e. \equref{Pcommute} is satisfied. Take $c\in C$, then for all $g,h\in C$ we find, using \eqref{pg}
\begin{eqnarray*}
P_g\circ P_h(c) &=& P_g(c_{(1)}\epsilon(\delta_{h^{-1}}\cdot c_{(2)})) = \epsilon(\delta_{g^{-1}}\cdot c_{(1)(1)}) c_{(1)(2)}\epsilon(\delta_{h^{-1}}\cdot c_{(2)}) \\
&=& \epsilon(\delta_{g^{-1}}\cdot c_{(1)}) c_{(2)(1)}\epsilon(\delta_{h^{-1}}\cdot c_{(2)(2)})
= P_h\circ P_g (c).
\end{eqnarray*}

Now, define for any $g\in G$ a map $\theta_g :C_{g^{-1}} \rightarrow C_g$ by $\theta_g (P_{g^{-1}}(c)) =\delta_g \cdot P_{g^{-1}}(c)$. One can easily observe that $C_e=C$ and $\theta_e=id_C$.

Take $c\in C$ then,
\begin{eqnarray*}
P_{(gh)^{-1}}\circ \theta{h^{-1}} \circ P_h (c) & = & P_{(gh)^{-1}} (\delta_{h^{-1}} 
\cdot _l c_{(1)} ) \epsilon (\delta_{h^{-1}} \cdot c_{(2)}) =P_{(gh)^{-1}} (\delta_{h^{-1}} \cdot  c )\\
& = & (\delta_{h^{-1}}\cdot c_{(1)}) \epsilon (\delta_{gh} \cdot ( \delta_{h^{-1}} \cdot c_{(2)})) 
= (\delta_{h^{-1}}\cdot c_{(1)}) \epsilon (\delta_g \cdot c_{(2)}) 
\epsilon ( \delta_{h^{-1}} \cdot c_{(3)}) \\
& = & \epsilon (\delta_g \cdot c_{(1)})(\delta_{h^{-1}}\cdot c_{(2)}) 
\epsilon ( \delta_{h^{-1}} \cdot c_{(3)}) ,
\end{eqnarray*}
on the other hand
\beqnast
\theta_{h^{-1}} \circ P_{g^{-1}} \circ P_h (c) & = & \theta_{h^{-1}} \circ P_h \circ P_{g^{-1}} (c) =
\delta_{h^{-1}} \cdot ( P_h \circ P_{g^{-1}} (c))\\
& = & \delta_{h^{-1}} \cdot ( P_h (\epsilon (\delta_g \cdot c_{(1)} ) c_{(2)}))
 =  \delta_{h^{-1}} \cdot ( \epsilon (\delta_g \cdot c_{(1)} ) c_{(2)}  
\epsilon (\delta_{h^{-1}} \cdot c_{(3)} )) \\
& = & \epsilon (\delta_g \cdot c_{(1)} )(\delta_{h^{-1}} \cdot c_{(2)})
\epsilon (\delta_{h^{-1}} \cdot c_{(3)} ) .
\eqnast
So \equref{Pthetacompatible} is verified. Finally take any $c\in C$ and let us check \equref{thetacomposition}
\begin{eqnarray*}
\theta_g\circ \theta_h\circ P_{h^{-1}}\circ P_{(gh)^{-1}}(c)&=& 
\theta_g ( \delta_h \cdot (P_{h^{-1}} (P_{(gh)^{-1}} (c)))) = 
\theta_g ( \delta_h \cdot (P_{h^{-1}} (c_{(1)} \epsilon (\delta_{gh}\cdot c_{(2)}))))\\
& = & \theta_g (\delta_h \cdot (c_{(1)} \epsilon (\delta_h \cdot c_{(2)} ) 
\epsilon (\delta_{gh}\cdot c_{(3)}))) \\
& = & \theta_g (\delta_h \cdot (c_{(1)}  
\epsilon (\epsilon (\delta_h \cdot c_{(2)} )(\delta_{gh}\cdot c_{(3)})))) \\
& = & \theta_g ((\delta_h \cdot c_{(1)})  
\epsilon (\delta_g \cdot (\delta_h \cdot c_{(2)} ))) = \theta_g (P_{g^{-1}} (\delta_h \cdot c))\\
& = & \delta_g \cdot (P_{g^{-1}} (\delta_h \cdot c)) =(\delta_g \cdot (\delta_h \cdot c_{(1)})) \epsilon (\delta_g \cdot (\delta_h \cdot c_{(2)}))\\
& = & (\delta_{gh} \cdot c_{(1)}) \epsilon (\delta_h \cdot c_{(2)}) \epsilon (\delta_g \cdot (\delta_h \cdot c_{(3)})) \\
& = & (\delta_{gh} \cdot c_{(1)}) \epsilon (\delta_g \cdot (\delta_h \cdot c_{(2)})) = (\delta_{gh} \cdot c_{(1)})\epsilon (\delta_{gh} \cdot c_{(2)}) 
\epsilon (\delta_h \cdot c_{(3)}) \\
& = & \theta_{gh} (P_{(gh)^{-1}}(c_{(1)} \epsilon (\delta_h \cdot c_{(2)}))) = 
\theta_{gh} \circ P_{(gh)^{-1}} \circ P_{h^{-1}} (c) .
\end{eqnarray*}

Conversely, suppose that we have a partial action $( \{ C_g \}_{g\in G} , \{ \theta_g :C_{g^{-1}} \rightarrow C_g \}_{g\in G} )$ of $G$ on $C$. Then we define a linear map $\cdot: kG \ot C \to C$ by $\delta_g \cdot c =\theta_g (P_{g^{-1}}(c))$. \\
Thanks to axiom (ii) in \deref{partactionC}, we find for any $c\in C$
\[
1_{kG} \cdot c = \delta_e \cdot c =c .
\]
As both $\theta_g$ and $P_{g^{-1}}$ are comultiplicative, it follows that 
\[
\Delta (\delta_g \cdot c ) =\Delta(\theta_g (P_{g^{-1}}(c)))=\theta_g (P_{g^{-1}}(c_{(1)}))\ot \theta_g (P_{g^{-1}}(c_{(2)}))=\delta_g \cdot c_{(1)} \otimes \delta_g \cdot c_{(2)} .
\]
Next, let us first remark that 
\begin{equation}\epsilon(\delta_g\cdot c_{(1)})c_{(2)}=\epsilon( \theta_g \circ P_{g^{-1}}(c_{(1)}))c_{(2)}
 = \epsilon( P_{g^{-1}}( c_{(1)}))c_{(2)} = P_{g^{-1}}(c),\eqlabel{backandforth}
 \end{equation}
 where we used that $\theta_g$ is a coalgebra morphism in the second equality and the properties of the projection $P_{g^{-1}}$ (see \leref{propertiesprojection}) in the third equality.

For any $g,h\in G$ and $c\in C$ we have
\begin{eqnarray*}
\delta_g \cdot (\delta_h \cdot c) &=& \theta_g \circ P_{g^{-1}} \circ \theta_h \circ P_{h^{-1}} (c) 
 =  \theta_g \circ \theta_h \circ P_{(gh)^{-1}} \circ P_{h^{-1}} (c) \\ 
 &=&  \theta_{gh} \circ P_{(gh)^{-1}} \circ P_{h^{-1}} (c)
= \delta_{gh} \cdot (P_{h^{-1}}(c)) \\
&=& \delta_{gh} \cdot c_{(1)}\epsilon (\delta_h\cdot c_{(2)})
\end{eqnarray*}
Here we used \equref{Pthetacompatible} in the second equality, \equref{thetacomposition} in the third equality and \equref{backandforth} in the last equality. Therefore $C$ is a partial $kG$-module coalgebra.

We leave the verification that both constructions are mutual inverses to the reader.
\end{proof}

\begin{exmp} (Induced partial actions) Let $C$ be a left $H$-module coalgebra with the action $\triangleright :H\otimes C\rightarrow C$. Consider $D\subseteq C$ a subcoalgebra with a $k$-linear projection $P:C\to D$ that is comultiplicative and satisfies
\begin{equation}
\label{symmetryofprojection}
P(c) =c_{(1)}\epsilon (P(c_{(2)})) =\epsilon (P(c_{(1)}))c_{(2)},
\end{equation}
for all $c\in C$ (i.e.\ $D$ satisfies the equivalent condition of \leref{propertiesprojection}). Then the linear map defined by
\[
\begin{array}{rccc} \cdot : & H\otimes D & \rightarrow & D \\
\, & h\otimes d & \mapsto & h\cdot d = P(h\triangleright d)
\end{array}
\]
is a symmetric partial action, turning $D$ into a symmetric left partial $H$-module coalgebra.

Indeed, for proving the item (PLHMC1), consider $d\in D$, then
\[
1_H \cdot c =P(1_H \triangleright d )=P(d) =d .
\]
Now, for the item (PLHMC2), consider $h\in H$ and $d\in D$, then
\begin{eqnarray*}
\Delta (h\cdot d ) & = & \Delta (P(h\triangleright d))=(P\otimes P)\circ 
\Delta (h\triangleright d) \\
& = & (P\otimes P)((h_{(1)}\triangleright c_{(1)})\otimes (h_{(2)}\triangleright c_{(2)})) \\
& = & (h_{(1)}\cdot c_{(1)})\otimes (h_{(2)}\cdot c_{(2)}) .
\end{eqnarray*}
Finally, for the item (PLHMC3), consider $h,k\in H$ and $d\in D$, then
\begin{eqnarray*}
h\cdot (k\cdot d) & = & P(h\triangleright (P(k\triangleright d))) 
 =  P(h\triangleright (k_{(1)}\triangleright c_{(1)}))\epsilon (P(k_{(2)}\triangleright c_{(2)})) \\
& = & P(hk_{(1)}\triangleright c_{(1)}) \epsilon (P(k_{(2)}\triangleright c_{(2)})) 
 =  (hk_{(1)}\cdot c_{(1)}) \epsilon (k_{(2)}\cdot c_{(2)}) .
\end{eqnarray*}
where we used the definition of the partial action $\cdot$ in the first and last equality, \eqref{symmetryofprojection} in the second equality and the fact that $C$ is an $H$-module coalgebra in the third equality.

The symmetry of the action follows easily from the symmetry in the identity \eqref{symmetryofprojection}.
\end{exmp}

\subsection{The $C$-ring associated to a partial module coalgebra}

As we have seen, if $A$ is a right $H$ comodule algebra, then the space $A\underline{\otimes} H$ has a structure of an $A$ coring. In the same way if $C$ is a left partial $H$-module coalgebra, then a subspace of $H\otimes C$, can be endowed with a $C$-ring structure.

\begin{defi} Let $C$ be a coalgebra over a field, a $C$-ring is a monoid in the monoidal category of $C$-bicomodules
$( {}^C \mathcal{M}^C , \square^C , C)$. 
\end{defi}

\begin{prop}\prlabel{Cring} 
Let $H$ be a bialgebra and $C$ be a left symmetric partial $H$ module coalgebra, then the subspace
\[
\underline{H\otimes C} =\{ \underline{h\otimes c} =\epsilon (h_{(1)}\cdot c_{(1)}) h_{(2)} \otimes c_{(2)} \in H\otimes C \} ,
\]
has a structure of a $C$ ring.
\end{prop}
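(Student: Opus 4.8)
The plan is to realize $\mathcal{R}:=\underline{H\otimes C}$ as a $C$-ring by transporting, step by step, the $A$-coring structure of $A\underline{\otimes}H$ from \leref{splitcoring} under the formal algebra/coalgebra duality: the base algebra $A$ is replaced by the base coalgebra $C$, the comultiplication of $H$ by its multiplication, so that the \emph{counit} of the coring becomes the \emph{unit} of the ring and the \emph{comultiplication} becomes the \emph{multiplication}. Concretely I would first present $\underline{H\otimes C}$ as the image of the idempotent $\pi:H\otimes C\to H\otimes C$, $\pi(h\otimes c)=\epsilon(h_{(1)}\cdot c_{(1)})h_{(2)}\otimes c_{(2)}$, whose idempotency is exactly identity \eqref{epsilon2}.

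Next I would equip $H\otimes C$ with the $C$-bicomodule structure given by the right coaction $\rho=\mathrm{id}_H\otimes\Delta_C$ and the left coaction $\lambda(h\otimes c)=(h_{(1)}\cdot c_{(1)})\otimes(h_{(2)}\otimes c_{(2)})$. Coassociativity of $\lambda$ is immediate from (PLHMC1) together with coassociativity of $\Delta_C$, and the compatibility $(\mathrm{id}_C\otimes\rho)\lambda=(\lambda\otimes\mathrm{id}_C)\rho$ is a one-line check. Counitality is where partiality is felt: one computes $(\epsilon_C\otimes\mathrm{id})\lambda=\pi$ and $(\mathrm{id}\otimes\epsilon_C)\rho=\mathrm{id}$, so both coactions are counital \emph{precisely} on $\underline{H\otimes C}$, which motivates passing to this subspace. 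That $\rho$ corestricts to $\underline{H\otimes C}\otimes C$ uses \eqref{epsilon2}, while $\lambda$ corestricts to $C\otimes\underline{H\otimes C}$ because $\pi=(\epsilon_C\otimes\mathrm{id})\lambda$ and coassociativity give $(\mathrm{id}_C\otimes\pi)\lambda=\lambda$ identically. Thus $(\underline{H\otimes C},\lambda,\rho)$ is a genuine $C$-bicomodule.

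For the monoid structure I would take the unit $\eta:C\to\underline{H\otimes C}$, $\eta(c)=1_H\otimes c=\underline{1_H\otimes c}$ (equality and colinearity by (PLHMC2)), and the multiplication
$$\mu\bigl(\underline{h\otimes c}\,\square^C\,\underline{h'\otimes c'}\bigr)=\epsilon_C(c)\,\underline{hh'\otimes c'},$$
that is, multiply the $H$-components, keep the second $C$-component, and project by $\pi$. Associativity of $\mu$ reduces at once to associativity of the product of $H$; both unit laws reduce to \eqref{epsilon} and \eqref{epsilon2}; and right $C$-colinearity of $\mu$ is immediate since $\rho$ only comultiplies the surviving $c'$ via $\Delta_C$. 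The only nontrivial point is that $\mu$ is well defined as $\pi$ composed with the evident map on $\mathcal{R}\otimes\mathcal{R}$, and lands in $\underline{H\otimes C}$, which holds automatically because of the outer $\pi$.

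The main obstacle is verifying that $\mu$ is \emph{left} $C$-colinear. This identity does \emph{not} hold on $\mathcal{R}\otimes\mathcal{R}$: evaluating $\lambda\circ\mu$ against $(\mathrm{id}_C\otimes\mu)\circ({}^{\mathcal{R}\square\mathcal{R}}\lambda)$ produces, respectively, a factor $(h_{(1)}h'_{(1)}\cdot c'_{(1)})$ and a factor $(h_{(1)}\cdot c)$ in the outer copy of $C$, which agree only after one imposes the cotensor relation
$$\sum_i h_i\otimes c_{i(1)}\otimes c_{i(2)}\otimes h'_i\otimes c'_i=\sum_i h_i\otimes c_i\otimes (h'_{i(1)}\cdot c'_{i(1)})\otimes h'_{i(2)}\otimes c'_{i(2)}$$
defining $\underline{H\otimes C}\,\square^C\,\underline{H\otimes C}$. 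The point is to trade the $C$-element produced by $\lambda$ on the first tensorand for the nested action $h'\cdot c'$ appearing on the second, and manipulating these iterated actions is exactly what requires the \emph{symmetric} axiom (PLHMC3') (together with (PLHMC3) and the derived identities \eqref{epsilon}, \eqref{epsilon2}). Once left colinearity is established, the remaining monoid identities in $({}^C\mathcal{M}^C,\square^C,C)$ are routine, completing the proof that $\underline{H\otimes C}$ is a $C$-ring.
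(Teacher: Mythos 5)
Your overall architecture agrees with the paper's: you present $\underline{H\otimes C}$ as the image of the idempotent $\pi$ (idempotency $=$ \eqref{epsilon2}), you equip it with the same left and right $C$-coactions $\lambda(\underline{h\otimes c})=h_{(1)}\cdot c_{(1)}\otimes\underline{h_{(2)}\otimes c_{(2)}}$ and $\rho(\underline{h\otimes c})=\underline{h\otimes c_{(1)}}\otimes c_{(2)}$, the same unit $\eta(c)=\underline{1_H\otimes c}$, and you correctly identify that left $C$-colinearity of the multiplication is the place where the cotensor relation \eqref{cotensor} and the symmetric axiom (PLHMC3') must be invoked --- this is exactly what happens in the paper's proof. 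However, there is a genuine gap at the center of the argument: your displayed multiplication
$\mu\bigl(\underline{h\otimes c}\,\square^C\,\underline{h'\otimes c'}\bigr)=\epsilon_C(c)\,\underline{hh'\otimes c'}$
is not a well-defined map, because the element $\underline{h\otimes c}\otimes\underline{h'\otimes c'}$ does not determine the representatives $(h,c,h',c')$, and different representatives give different outputs. Concretely, take $H=k\mathbb{Z}_2=k\{e,g\}$ and $C=C_1\oplus C_2$ a direct sum of coalgebras with the partial action $\delta_g\cdot c=P_g(c)$, where $P_g$ is the projection onto $C_1$; this is a genuine symmetric partial $kG$-module coalgebra. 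For $c\in C_2$ grouplike one has $\underline{\delta_g\otimes c}=\delta_g\otimes P_g(c)=0$, so the element $\underline{\delta_g\otimes c}\otimes\underline{\delta_e\otimes c'}$ of the cotensor product is zero; yet your formula assigns to it the value $\epsilon_C(c)\,\underline{\delta_g\otimes c'}=\delta_g\otimes c'\neq 0$ for $c'\in C_1$ grouplike. The map your verbal recipe describes (``the evident map followed by $\pi$'') \emph{is} well defined, but on elements of $\underline{H\otimes C}\otimes\underline{H\otimes C}$ --- which already carry $\pi$ inside each tensorand --- it evaluates to the paper's formula
\[
\mu\bigl(\underline{h\otimes c}\otimes\underline{k\otimes d}\bigr)=\underline{\epsilon(h_{(1)}\cdot c)\,\epsilon(k_{(1)}\cdot d_{(1)})\,h_{(2)}k_{(2)}\otimes d_{(2)}},
\]
in which your scalar $\epsilon_C(c)$ is replaced by $\epsilon(h_{(1)}\cdot c)$; the discrepancy between these two is precisely partiality, since $\epsilon(h\cdot c)=\epsilon(h)\epsilon(c)$ characterizes global module coalgebras.

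This is not merely a notational slip, because your verification plan relies on the erroneous simplification. With the correct formula, the inner projections do not commute past the multiplication (in the example above, $\epsilon(\delta_g\cdot c)\,\underline{\delta_g k\otimes d}\neq\epsilon(c)\,\underline{\delta_g k\otimes d}$), so associativity does \emph{not} ``reduce at once to associativity of the product of $H$'': it is a genuine computation in which the terms $\epsilon(h_{(1)}\cdot c)\,\epsilon(h_{(2)}k_{(1)}\cdot d_{(1)})\cdots$ must be contracted using (PLHMC3), (PLHMC3') and \eqref{epsilon2} --- this is precisely the long associativity computation in the paper's appendix. The same applies to the unit laws, which need \eqref{epsilon2} through the redundancy identity \eqref{redundancy}, and one must also check (as the paper does) that the images of $(\eta\otimes I)\circ\lambda$ and $(I\otimes\eta)\circ\rho$ land in the cotensor product before the unit axioms can even be stated. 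In short: the partial structure is felt in \emph{every} axiom of the $C$-ring, not only in left colinearity, and repairing your proof requires replacing the displayed multiplication by the projected one and redoing associativity, unitality and right colinearity with it.
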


\begin{proof} First it is important to note that, for every $h\in H$ and $c\in C$ we have \begin{equation}
\label{redundancy}
\underline{h\otimes c}=\underline{\epsilon (h_{(1)}\cdot c_{(1)}) h_{(2)} 
\otimes c_{(2)}} ,
\end{equation} 
this follows easily from (\ref{epsilon2}). 

The left $C$-comodule structure on $\underline{H\otimes C}$ is given by
\[
\lambda: \ul{H\ot C}\to C\ot \ul{H\ot C},\quad \lambda (\underline{h\otimes c}) =h_{(1)}\cdot c_{(1)} \otimes \underline{ h_{(2)} \otimes c_{(2)}}.
\]
By the axiom (PLHMC1), we can see that $(I\otimes \lambda )\circ \lambda =(\Delta \otimes I)\circ \lambda$, and because of (\ref{redundancy}), we obtain $(\epsilon \otimes I)\circ \lambda =I$.

The right comodule structure, in its turn, is given by $\rho:\ul{H\ot C}\to \ul{H\ot C}\ot C,\ 
\rho (\underline{h\otimes c}) =\underline{h\otimes c_{(1)}}\otimes c_{(2)}$,
which satisfies trivially the equalities $(\rho \otimes I)\circ \rho =(I\otimes \Delta )\circ \rho$ and $(I\otimes \epsilon )\circ \rho =I$. Therefore $\underline{H\otimes C}\in {}^C \mathcal{M}^C$.

The cotensor product $\underline{H\otimes C} \square^C \underline{H\otimes C}$, being the equalizer of the morphisms $\rho \otimes I$ and $I\otimes \lambda$, can be characterized as the subspace of $\underline{H\otimes C} \otimes \underline{H\otimes C}$ spanned by elements
\[
\sum_i \underline{ h^i \otimes c^i }\otimes \underline{ k^i \otimes d^i },
\]
satisfying
\begin{equation}
\label{cotensor}
\sum_i \underline{ h^i \otimes c^i_{(1)} }\otimes c^i_{(2)} \otimes \underline{ k^i \otimes d^i } =
\sum_i \underline{ h^i \otimes c^i }\otimes k^i_{(1)}\cdot d^i_{(1)} \otimes 
\underline{ k^i_{(2)} \otimes d^i_{(2)} }.
\end{equation}
The multiplication map, $\mu : \underline{H\otimes C} \square^C \underline{H\otimes C} \rightarrow \underline{H\otimes C}$ is defined as
\[
\sum_i (\underline{ h^i \otimes c^i })(\underline{ k^i \otimes d^i })= 
\sum_i \underline {\epsilon (h^i_{(1)} \cdot c^i )
\epsilon (k^i_{(1)} \cdot d^i_{(1)} ) h^i_{(2)}k^i_{(2)} \otimes d^i_{(2)} }
\]
One can check that $\mu$ is an associative $C$-bimodule map.
The unit map $\eta :C\rightarrow \underline{H\otimes C}$ is given by $\eta (c) =\underline{1_H \otimes c}$. We can see that it is a bi-comodule map. For the left side, we have
\beqnast
\lambda \circ \eta (c) & = & \lambda (\underline{1_H \otimes c}) 
 =  1_H \cdot c_{(1)} \otimes \underline{1_H \otimes c_{(2)}} \\
& = & c_{(1)} \otimes \underline{1_H \otimes c_{(2)}} 
 =  c_{(1)} \otimes \eta ( c_{(2)}) \\
& = & (I\otimes \eta ) (c_{(1)} \otimes c_{(2)}) 
 =  (I\otimes \eta ) \circ \Delta (c) ,
\eqnast
and for the right side,
\beqnast
\rho \circ \eta (c) & = & \rho (\underline{1_H \otimes c}) 
 =  \underline{1_H \otimes c_{(1)}} \otimes c_{(2)}\\
& = & \eta (c_{(1)} )\otimes  c_{(2)} 
 =  (\eta \otimes I) (c_{(1)} \otimes c_{(2)}) \\
& = & (\eta \otimes I) \circ \Delta (c) .
\eqnast
The images of 
$(\eta \otimes I)\circ \lambda$ and $(I\otimes \eta )\circ \rho$ lie in the co-tensor product $\underline{H\otimes C} \square^C \underline{H\otimes C}$. Indeed, applying the bi-colinearity of $\eta$, we find 
\beqnast
(\rho \otimes I)\circ (\eta \otimes I)\circ \lambda 
&=& (\eta\ot I\ot I)\circ (\Delta\ot I)\circ \lambda = (\eta\ot I\ot I)\circ (I\ot \lambda)\circ \lambda\\
&=&(I\ot \lambda)\circ (\eta\ot I)\circ \lambda
\eqnast
with a similar proof for $(I\otimes \eta )\circ \rho$. It now makes sense to consider the unit axioms for the $C$-ring, i.e.
\begin{eqnarray*}
\mu \circ (\eta \otimes I)\circ \lambda = I= \mu \circ (I\otimes \eta ) \circ \rho =I
\end{eqnarray*}
whose verification is left to the reader.
\end{proof}

Finally, there exist a duality between the $C$-rings obtained from partial module coalgebras and $A$-corings defined from partial comodule algebras. More precisely, we have,

\begin{prop} Let $H$ and $K$ be two bialgebras with a dual pairing $\langle \, ,\, \rangle : K\otimes H \rightarrow k$. Consider a left $H$-comodule algebra $A$, with left partial coaction $\lambda :A\rightarrow H\otimes A$, and  a left partial $K$-module coalgebra $C$. Suppose that there is a dual pairing $\left( \, , \, \right) :C\otimes A\rightarrow k$, such that for every $x\in C$, $\xi \in K$ and $a\in A$ we have $\left( \xi \cdot x , a\right) =\langle \xi , a^{[-1]} \rangle \left( x, a^{[0]} \right)$. Then there is a dual pairing between the $A$-coring $H\underline{\otimes}A =\lambda (1_A )(H\otimes A)$ and the $C$-ring $\underline{K\otimes C}$.
\end{prop}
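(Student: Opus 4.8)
The plan is to produce one explicit pairing, reduce everything to a single bridging identity, and then check the two structural axioms, the genuine work sitting entirely in the multiplicativity axiom.

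First I record the two structures to be paired. By the left-handed analogue of \leref{splitcoring}, the $A$-coring $H\underline{\otimes}A=\lambda(1_A)(H\otimes A)$, with $\lambda(1_A)=1^{[-1]}\otimes 1^{[0]}$, has counit $\Tepsilon(h1^{[-1]}\otimes a1^{[0]})=\epsilon(h)a$ and comultiplication
\[
\TDelta(h1^{[-1]}\otimes a1^{[0]})=(h_{(1)}1^{[-1']}\otimes 1^{[0']})\otimes_A(h_{(2)}1^{[-1]}\otimes a1^{[0]}),
\]
while $\underline{K\otimes C}$ carries the product, unit and bicomodule structure of \prref{Cring}. By a dual pairing between them I understand a $k$-linear map $\tau\colon\underline{K\otimes C}\otimes(H\underline{\otimes}A)\to k$, balanced over the base pairing $(\,,\,)$, which intertwines the product of $\underline{K\otimes C}$ with the coproduct $\TDelta$ and the unit of $\underline{K\otimes C}$ with the counit $\Tepsilon$. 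I define $\tau$ by restricting the obvious form $(\xi\otimes x)\otimes(h\otimes a)\mapsto\langle\xi,h\rangle(x,a)$ on $(K\otimes C)\otimes(H\otimes A)$; on underlined generators it is then computed by
\[
\tau(\underline{\xi\otimes x},\,h1^{[-1]}\otimes a1^{[0]})=\langle\xi,h1^{[-1]}\rangle\,(x,a1^{[0]}),
\]
the independence of this expression from the chosen representative $\xi\otimes x$ following from the idempotency recorded in \eqref{redundancy} together with the identity below.

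The identity that bridges the two partial structures is
\[
\epsilon_C(\xi\cdot x)=\langle\xi,1^{[-1]}\rangle\,(x,1^{[0]}),
\]
gotten by setting $a=1_A$ in the compatibility hypothesis $(\xi\cdot x,a)=\langle\xi,a^{[-1]}\rangle(x,a^{[0]})$ and using $(\,\cdot\,,1_A)=\epsilon_C$. It turns the defect $\epsilon_C(\xi\cdot x)$ of the partial $K$-action into a pairing against the idempotent $\lambda(1_A)$ measuring the defect of the partial $H$-coaction, and it is the single device that makes the partial case work. With it the unit--counit axiom is immediate: for $\eta(x)=\underline{1_K\otimes x}$ one gets $\tau(\eta(x),h1^{[-1]}\otimes a1^{[0]})=\epsilon(h)\epsilon(1^{[-1]})(x,a1^{[0]})$, and since $\epsilon(1^{[-1]})1^{[0]}=1_A$ by the left counit axiom this collapses to $\epsilon(h)(x,a)=(x,\Tepsilon(h1^{[-1]}\otimes a1^{[0]}))$. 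The $\otimes_A$-balancing needed to make $\tau(u,d_{(1)})\tau(v,d_{(2)})$ meaningful, and the $C$-colinearity over $\square^C$, are checked the same way: each amounts to moving an element of $A$ across a tensor, which is exactly what the compatibility hypothesis permits.

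The heart of the matter is the multiplicativity axiom
\[
\tau(uv,\,d)=\tau(u,\,d_{(1)})\,\tau(v,\,d_{(2)}),\qquad u\otimes v\in\underline{K\otimes C}\,\square^C\,\underline{K\otimes C}.
\]
Writing $u=\underline{\xi\otimes x}$ and $v=\underline{\zeta\otimes y}$, I would expand the left side by the multiplication formula of \prref{Cring} and the right side by $\TDelta$, then on each side split every occurrence of $\lambda(1_A)$ using the pairing axioms of $\langle\,,\,\rangle$ and $(\,,\,)$ and convert each surviving factor $\langle{-},1^{[-1]}\rangle({-},1^{[0]})$ into a defect $\epsilon_C({-}\cdot{-})$ via the bridge identity. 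Both sides then become sums of scalars of the form $\epsilon_C({-}\cdot{-})$, $\langle{-},h\rangle$ and $({-},a)$; a short counit collapse shows they already agree when the action is global. In the partial case the two sides differ only in how the $\epsilon_C$-defects are distributed among the Sweedler legs, and this remaining discrepancy is resolved precisely by the cotensor relation \eqref{cotensor} defining composable pairs, used alongside the partial-coalgebra identities \eqref{epsilon}--\eqref{epsilon2}. Carrying out this matching---reconciling the defects produced by the ring multiplication with those produced by the coring comultiplication under the constraint of composability---is the step I expect to be the main obstacle; the rest is formal.
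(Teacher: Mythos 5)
Your proposal is not a proof; it is a proof plan whose decisive step is left as a declared expectation. The paper itself states this proposition without proof (it is also absent from the appendix of additional proofs), so there is no official argument to measure you against; but judged on its own terms, your text stops exactly where the content of the proposition begins. You define the pairing $\tau(\underline{\xi\otimes x},\, h1^{[-1]}\otimes a1^{[0]})=\langle\xi,h1^{[-1]}\rangle\,(x,a1^{[0]})$, isolate the correct bridge identity $\epsilon_C(\xi\cdot x)=\langle\xi,1^{[-1]}\rangle\,(x,1^{[0]})$, and verify the unit--counit axiom; all of this is sensible. But for the multiplicativity axiom $\tau(uv,d)=\tau(u,d_{(1)})\,\tau(v,d_{(2)})$ you only describe what you ``would'' do and concede that carrying out the matching of $\epsilon_C$-defects is ``the step I expect to be the main obstacle.'' That axiom \emph{is} the proposition: the compatibility between the multiplication of the $C$-ring $\underline{K\otimes C}$ and the comultiplication of the $A$-coring $H\underline{\otimes}A$ is the only nontrivial assertion being made, and it is nowhere established.

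Moreover, the steps you wave through are genuinely delicate, not formal. First, well-definedness of $\tau$ on underlined generators: expanding $\underline{\xi\otimes x}=\epsilon(\xi_{(1)}\cdot x_{(1)})\,\xi_{(2)}\otimes x_{(2)}$ and applying the bridge identity together with the pairing axioms, the ``obvious form'' pairs $\xi\otimes x$ against $\lambda(1_A)(h\otimes a)\lambda(1_A)$ rather than against $(h\otimes a)\lambda(1_A)$; since $\lambda(1_A)$ is not central in $H\otimes A$ (the paper stresses exactly this point for partial coactions), reconciling the two requires identities of the type $\epsilon(\xi_{(1)}\cdot x_{(1)})\,x_{(2)}=x_{(1)}\,\epsilon(\xi_{(2)}\cdot x_{(2)})$, which in the paper's own group-case computation (see the discussion around \eqref{pg}) rests on the \emph{symmetry} of the partial action---a hypothesis you never invoke, and which also enters implicitly because \prref{Cring} constructs the $C$-ring only for symmetric partial module coalgebras. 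Second, the balancing: $d_{(1)}\otimes_A d_{(2)}$ is a tensor over $A$ while $u\otimes v$ ranges over the cotensor product over $C$, so the expression $\tau(u,d_{(1)})\,\tau(v,d_{(2)})$ is only meaningful after a mixed compatibility between the $A$-bimodule structure of $H\underline{\otimes}A$, the cotensor relation \eqref{cotensor}, and the compatibility hypothesis is verified; asserting that this ``amounts to moving an element of $A$ across a tensor'' names the problem without solving it. Until the multiplicativity computation and these two checks are written out---presumably via \eqref{epsilon}, \eqref{epsilon2} and \eqref{redundancy}, along the lines you sketch---the proposition remains unproved.
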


\subsection{Dualities}

\subsubsection*{Module coalgebras versus module algebras}

\begin{thm}\thlabel{modulevscomodule}
Suppose that we have two linear maps $\cdot_l :H\otimes C \rightarrow C$ and $\cdot_r :A\otimes H \rightarrow A$ related by the following equality
\begin{equation}\label{moduleversuscomodule}
(a\cdot_r h,c)=(a,h\cdot_l c) ,
\end{equation}
for all $a\in A$, $h\in H$ and $c\in C$. Then $(C,\cdot_l)$ is a (symmetric) left partial $H$-module coalgebra, if, and only if, $(A,\cdot_r)$ is a (symmetric) right partial $H$-module algebra. 

In particular, for any (symmetric) left $H$-module coalgebra $C$, the linear dual $A=C^*$ is a (symmetric) right partial $H$-module algebra, as (\ref{moduleversuscomodule}) then becomes a definition for the partial action.
\end{thm}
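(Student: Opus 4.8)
The plan is to treat the relation \eqref{moduleversuscomodule} as a device that transports the $H$-action across the pairing, and then to verify that, after this transport, each defining identity of a (symmetric) left partial $H$-module coalgebra on $C$ is exactly the pairing-dual of a defining identity of a (symmetric) right partial $H$-module algebra on $A$. Throughout I assume, as is implicit from the surrounding set-up, that $(\,,\,)$ is a \emph{nondegenerate} dual pairing between the algebra $A$ and the coalgebra $C$; that is, $(ab,c)=(a,c_{(1)})(b,c_{(2)})$, $(1_A,c)=\epsilon_C(c)$, and the pairing separates the points of $A$ and of $C$ (hence so do the induced pairings on tensor powers, since $k$ is a field). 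The identity $(a\cdot_r h,c)=(a,h\cdot_l c)$ is the only bridge I will use, applied once or twice to move every occurrence of $\cdot_l$ on the $C$-side to a $\cdot_r$ on the $A$-side.

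First I would record the axiom-by-axiom correspondence and its mechanism. The counit-in-$H$ axiom (PLHMC2), $1_H\cdot_l c=c$, pairs with $a$ to give $(a\cdot_r 1_H,c)=(a,c)$ for all $c$, which by nondegeneracy is exactly $a\cdot_r 1_H=a$. The comultiplicativity axiom (PLHMC1) is dual to multiplicativity of $\cdot_r$: pairing the candidate identity with suitable test elements (with $c$ when it is an identity in $A$, and with $a\otimes b$ when it is the identity (PLHMC1) living in $C\otimes C$) and using multiplicativity of $(\,,\,)$ on each factor, \eqref{moduleversuscomodule} converts $((ab)\cdot_r h,c)$ into $((a\cdot_r h_{(1)})(b\cdot_r h_{(2)}),c)$ precisely when $\Delta(h\cdot_l c)=(h_{(1)}\cdot_l c_{(1)})\otimes (h_{(2)}\cdot_l c_{(2)})$. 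Every implication reverses verbatim, so in each case the scalar identity obtained after pairing is symmetric in the two structures and nondegeneracy upgrades it to the required equality of elements.

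The step needing genuine care — and where I expect the bookkeeping to be heaviest — is the partial-associativity axiom (PLHMC3) and its symmetric partner (PLHMC3'). Pairing (PLHMC3) with $a$ and applying \eqref{moduleversuscomodule} twice turns its left-hand side into $((a\cdot_r h)\cdot_r k,c)$. On the right-hand side the factor $\epsilon(k_{(2)}\cdot_l c_{(2)})$ must be recognized as $(1_A,k_{(2)}\cdot_l c_{(2)})=(1_A\cdot_r k_{(2)},c_{(2)})$ via the counit-compatibility of the pairing; this is exactly what manufactures the idempotent-type factor $1_A\cdot_r k_{(2)}$, so that after one more use of multiplicativity the right-hand side becomes $((a\cdot_r hk_{(1)})(1_A\cdot_r k_{(2)}),c)$. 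By nondegeneracy this reads $(a\cdot_r h)\cdot_r k=(a\cdot_r hk_{(1)})(1_A\cdot_r k_{(2)})$, which is precisely the right-handed partial-associativity axiom; tracking the Sweedler indices and using coassociativity is the only delicate point. The symmetric axiom (PLHMC3') yields in the same way $(a\cdot_r h)\cdot_r k=(1_A\cdot_r k_{(1)})(a\cdot_r hk_{(2)})$, so the presence of the symmetry axiom on one side is equivalent to its presence on the other; hence ``symmetric'' corresponds to ``symmetric''.

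Finally, for the ``in particular'' clause I would take $A=C^*$ with its convolution algebra structure and the evaluation pairing $(\phi,c)=\phi(c)$, which is a nondegenerate dual pairing over the field $k$. Here \eqref{moduleversuscomodule} is no longer a hypothesis but a definition: for fixed $\phi\in C^*$ and $h\in H$ the assignment $c\mapsto(\phi,h\cdot_l c)$ is again an element of $C^*$, so it defines $\phi\cdot_r h$. Since a (symmetric) left $H$-module coalgebra is in particular a symmetric left partial $H$-module coalgebra (the global case being a special case of the partial one), the forward implication established above shows that $C^*$ is a symmetric right partial $H$-module algebra, which completes the argument.
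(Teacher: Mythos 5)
Your proposal is correct and takes essentially the same route as the paper: both arguments transport each axiom across the dual pairing and invoke non-degeneracy to upgrade the resulting scalar identities to identities of elements, with the one axiom worked out in detail being exactly the correspondence between (PLHMC3) and the right-handed partial associativity $(a\cdot_r h)\cdot_r k=(a\cdot_r hk_{(1)})(1_A\cdot_r k_{(2)})$, manufactured through $\epsilon(k_{(2)}\cdot_l c_{(2)})=(1_A\cdot_r k_{(2)},c_{(2)})$. The only cosmetic differences are that you run this computation in the coalgebra-to-algebra direction while the paper runs it algebra-to-coalgebra (each claiming reversibility for the converse), and that you state explicitly the dual-pairing and non-degeneracy hypotheses that the paper leaves implicit.
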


\begin{proof}
Let us check that the morphism $\cdot_r:A\ot H\to A$ defines a partial right module algebra structure on $A$ if and only if the morphism $\cdot_l:H\ot C\to C$ defines a partial left module coalgebra structure on $C$. To this end, it suffices to check that, taking arbitrary $a\in A$ and $c\in C$, the (right handed versions of) axioms (PLA1)-(PLA4) correspond to axioms (PLHMC1)-(PLHMC4). Let us check this for one axiom, leaving the others for the reader. Suppose that $A$ is right partial $H$ module algebra, then
\begin{eqnarray*}
(a,h\cdot_l (k\cdot_l c))&=& ((a\cdot_r h)\cdot_r k , c)= ((a\cdot_r hk_{(1)})(1_A\cdot_r k_{(2)}), c)\\
& =& ((a\cdot_r hk_{(1)}) , c_{(1)})((1_A\cdot_r k_{(2)}), c_{(2)}) =(a, hk_{(1)}\cdot_l c_{(1)})(1_A , k_{(2)}\cdot_l c_{(2)})\\
& = & (a, hk_{(1)}\cdot_l c_{(1)})\epsilon ( k_{(2)}\cdot_l c_{(2)}) = (a, (hk_{(1)}\cdot_l c_{(1)})\epsilon ( k_{(2)}\cdot_l c_{(2)}))
\end{eqnarray*}
for any $a\in A$ then $(k\cdot_l c)=(hk_{(1)}\cdot_l c_{(1)})\epsilon ( k_{(2)}\cdot_l c_{(2)})$, therefore, $C$ is a left partial $H$ module coalgebra.
\end{proof}

\subsubsection*{Module coalgebras versus comodule algebras}

\begin{thm}
Let $\bk{-,-}:H\ot K\to k$ be a dual pairing between the Hopf algebras $H$ and $K$ and $(-,-):A\ot C\to k$ be a non-degenerate dual pairing between the coalgebra $C$ and the algebra $A$, which is a symmetric left partial $K$-comodule algebra with partial coaction $\lambda :A\to K\ot A,\ \lambda (a)=a^{[-1]}\ot a^{[0]}$. Suppose that we have a linear map $\cdot : H\otimes C\rightarrow C$ satisfying  
\[
(a,h\cdot c)=\bk{h,a^{[-1]}}(a^{[0]},c) ,
\]
for all $a\in A$, $c\in C$ and $h\in H$. Then the map $\cdot$ endows $C$ with a structure of a symmetric partial left $H$-module coalgebra. 
\end{thm}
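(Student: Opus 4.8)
The plan is to use the non-degeneracy of the pairing $(-,-):A\otimes C\to k$ to transport each axiom of a symmetric partial $H$-module coalgebra on $C$ back to a scalar identity obtained by pairing against elements of $A$. Since $(-,-)$ is non-degenerate, the induced map $C\to A^*$ is injective, so an equality $x=y$ in $C$ follows as soon as $(a,x)=(a,y)$ holds for all $a\in A$; tensoring this injection with itself over the field $k$ shows that $A\otimes A$ separates the points of $C\otimes C$, which will settle the comultiplicativity axiom. Throughout I would use the three compatibilities at hand: the pairing rules $(1_A,c)=\epsilon_C(c)$ and $(ab,c)=(a,c_{(1)})(b,c_{(2)})$; the Hopf-pairing rules $\langle h,xy\rangle=\langle h_{(1)},x\rangle\langle h_{(2)},y\rangle$, $\langle hk,x\rangle=\langle h,x_{(1)}\rangle\langle k,x_{(2)}\rangle$, $\langle 1_H,x\rangle=\epsilon_K(x)$; and the defining relation $(a,h\cdot c)=\langle h,a^{[-1]}\rangle(a^{[0]},c)$.

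First I would dispose of (PLHMC2): for any $a$ one computes $(a,1_H\cdot c)=\langle 1_H,a^{[-1]}\rangle(a^{[0]},c)=\epsilon_K(a^{[-1]})(a^{[0]},c)=(a,c)$ using counitality of the left coaction, whence $1_H\cdot c=c$. For (PLHMC1) I would pair the asserted identity with an arbitrary $a\otimes b\in A\otimes A$: expanding $(ab,h\cdot c)$ through the relation and using multiplicativity of $\lambda$ together with the two splitting rules gives $\langle h_{(1)},a^{[-1]}\rangle\langle h_{(2)},b^{[-1]}\rangle(a^{[0]},c_{(1)})(b^{[0]},c_{(2)})$, and the same quantity results from expanding $(a,h_{(1)}\cdot c_{(1)})(b,h_{(2)}\cdot c_{(2)})$. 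As $A\otimes A$ separates $C\otimes C$, this yields $\Delta(h\cdot c)=(h_{(1)}\cdot c_{(1)})\otimes(h_{(2)}\cdot c_{(2)})$.

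The core of the argument is axioms (PLHMC3) and (PLHMC3'). Applying the relation twice gives $(a,h\cdot(k\cdot c))=\langle h,a^{[-1]}\rangle\langle k,a^{[0][-1]}\rangle(a^{[0][0]},c)$, that is, a contraction of the iterated coaction $(I\otimes\lambda)\lambda(a)=a^{[-1]}\otimes a^{[0][-1]}\otimes a^{[0][0]}$ against $h\otimes k$ and $(-,c)$. Here I invoke that $A$ is a \emph{symmetric} left partial $K$-comodule algebra, so the left analogues of (PRHCA3) and (PRHCA4) both hold, rewriting $(I\otimes\lambda)\lambda(a)$ either as $a^{[-1]}_{(1)}\otimes 1^{[-1]}a^{[-1]}_{(2)}\otimes 1^{[0]}a^{[0]}$ or as $a^{[-1]}_{(1)}\otimes a^{[-1]}_{(2)}1^{[-1]}\otimes a^{[0]}1^{[0]}$. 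Substituting the first, splitting $\langle k,\cdot\rangle$ and $(\cdot,c)$, and regrouping via $\langle h,a^{[-1]}_{(1)}\rangle\langle k_{(2)},a^{[-1]}_{(2)}\rangle=\langle hk_{(2)},a^{[-1]}\rangle$ turns the expression into $(1_A,k_{(1)}\cdot c_{(1)})(a,hk_{(2)}\cdot c_{(2)})=\epsilon(k_{(1)}\cdot c_{(1)})(a,hk_{(2)}\cdot c_{(2)})$, which is (PLHMC3'); the second rewriting produces (PLHMC3) in the mirror fashion. In each case non-degeneracy removes the universally quantified $a$. The one genuine bookkeeping hazard is keeping the order of the non-commuting factors $1^{[0]}a^{[0]}$ versus $a^{[0]}1^{[0]}$ matched to the left/right placement of the idempotent $\lambda(1_A)$ in the two coaction axioms, and thereby pairing each comodule-algebra axiom with the correct one of (PLHMC3)/(PLHMC3'); once this is tracked, both identities fall out symmetrically.
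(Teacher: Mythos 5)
Your proposal is correct and follows essentially the same route as the paper: pair against elements of $A$ (resp.\ $A\otimes A$ for comultiplicativity), apply the defining relation twice, rewrite the iterated coaction via the two symmetric left partial comodule-algebra axioms, regroup using the Hopf pairing, and conclude by non-degeneracy. The paper only carries out the computation for (PLHMC3') explicitly, remarking that the other axioms are "verified similarly"; your treatment of (PLHMC1) via the separation of points of $C\otimes C$ by $A\otimes A$, and your matching of each coaction axiom to (PLHMC3) versus (PLHMC3'), is exactly the content of that remark.
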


\begin{proof} 
Let us check that $\cdot$ satifies axiom (PHMC3'). Consider $a\in A$ , $c\in C$ and $h,k\in H$, then
\beqnast
& \, & \left( a , h\cdot (k\cdot c) \right)  =  \langle h, a^{[-1]} \rangle \left( a^{[0]} , k\cdot c \right) = \langle h, a^{[-1]} \rangle \langle k, a^{[0][-1]} \rangle \left( a^{[0][0]} , c \right) \\
& = & \langle h, {a^{[-1]}}_{(1)} \rangle \langle k, 1^{[-1]} {a^{[-1]}}_{(2)} \rangle \left( 1^{[0]}a^{[0]} , c \right) \\
& = & \langle h, {a^{[-1]}}_{(1)} \rangle \langle k_{(1)}, 1^{[-1]} \rangle \langle k_{(2)} , {a^{[-1]}}_{(2)} \rangle \left( 1^{[0]} c_{(1)} \right) \left( a^{[0]} , c_{(2)} \right) \\
& = & \left( 1, k_{(1)} \cdot  c_{(1)} \right) \langle hk_{(2)} , a^{[-1]} \rangle  \left( a^{[0]} , c_{(2)} \right) = \epsilon ( k_{(1)} \cdot  c_{(1)} ) \left( a , hk_{(2)} \cdot c_{(2)} \right)\\
& = & \left( a , \epsilon ( k_{(1)} \cdot  c_{(1)} ) (hk_{(2)} \cdot c_{(2)}) \right) ,
\eqnast
for every $a\in A$, then we have $h\cdot (k\cdot c) =\epsilon ( k_{(1)} \cdot  c_{(1)} ) (hk_{(2)} \cdot c_{(2)})$. The other axioms are verified similarly. Therefore, $C$ is a symmetric left partial $H$-module coalgebra.
\end{proof}

\section{Partial comodule coalgebras and partial co-smash coproducts}\selabel{comodulecoalgebras}

\subsection{Definition and examples}

\begin{defi}
A left partial coaction of a Hopf algebra $H$ on a $k$-coalgebra $C$ is a linear map 
\[
\begin{array}{rccc} \lambda : & C & \rightarrow & H\otimes C\\
\, & c & \mapsto & c^{[-1]}\otimes c^{[0]}
\end{array}
\] 
such that
\begin{enumerate}
\item[(PLHCC1)] $(I\otimes \Delta)\circ \lambda (c)={c_{(1)}}^{[-1]} {c_{(2)}}^{[-1]} \otimes {c_{(1)}}^{[0]} \otimes {c_{(2)}}^{[0]}$, for all $c\in C$;
\item[(PLHCC2)] $(\epsilon \otimes I)\circ \lambda (c) =c$, for all $c\in C$;
\item[(PLHCC3)] for all $c\in C$ we have 
\[
(I\otimes \lambda)\circ \lambda (c)  =  {c_{(1)}}^{[-1]} 
\epsilon ({c_{(1)}}^{[0]}) {{c_{(2)}}^{[-1]}}_{(1)} \otimes {{c_{(2)}}^{[-1]}}_{(2)} \otimes {c_{(2)}}^{[0]} .
\]
\end{enumerate}
The coalgebra $C$ is called a left partial $H$-comodule coalgebra. If, in addition, the partial coaction satisfies the condition,
\begin{enumerate} 
\item[(PLHCC3')] for all $c\in C$ we have 
\[
(I\otimes \lambda)\circ \lambda (c)  ={{c_{(1)}}^{[-1]}}_{(1)} {c_{(2)}}^{[-1]} 
\epsilon ({c_{(2)}}^{[0]})\otimes {{c_{(1)}}^{[-1]}}_{(2)} \otimes {c_{(1)}}^{[0]}.
\]
\end{enumerate}
then the partial coaction is said to be symmetric.
\end{defi}

It is easy to see that any left $H$-comodule coalgebra is a left partial $H$-comodule coalgebra. Indeed, the axioms (PLHCC1) and (PLHCC2) are the same as the classic case. For the axiom (PLHCC3), if 
\begin{equation}
\label{classicalcase4}
c^{[-1]}\epsilon (c^{[0]})=\epsilon (c)1_H
\end{equation}
for any $c\in C$ then, combining this with axiom (PLHCC3), we find
\beqnast
(I\otimes \lambda )\circ \lambda (c) &=&
   {c_{(1)}}^{[-1]} \epsilon ({c_{(1)}}^{[0]}) {{c_{(2)}}^{[-1]}}_{(1)} \otimes {{c_{(2)}}^{[-1]}}_{(2)} \otimes {c_{(2)}}^{[0]}  \\
& = & \epsilon (c_{(1)}){{c_{(2)}}^{[-1]}}_{(1)} \otimes {{c_{(2)}}^{[-1]}}_{(2)} \otimes {c_{(2)}}^{[0]}  
 =  {c^{[-1]}}_{(1)} \otimes {c^{[-1]}}_{(2)} \otimes c^{[0]}  \\
& =&  (\Delta \otimes I)\circ \lambda (c) 
\eqnast
This shows that a left partial $H$-comodule coalgebra satisfying 
(\ref{classicalcase4}) is in fact a (global) left $H$-comodule coalgebra.
 
In \cite{wang}, the author gave a definition of left partial $H$-comodule coalgebra, but in the last axiom, the author considered only the nonsymmetric version. As we have learned so far, the most interesting properties of partial actions can be found when we consider more symmetric versions of the actions and coactions. 

Analogously, we can have the definition of a right partial $H$-comodule coalgebra.

\begin{exmp} An immediate example of a left partial $H$ comodule coalgebra is $C=D/I$ where $D$ is a left $H$-comodule coalgebra with coaction $\delta :D\rightarrow H\otimes D$, denoted by $\delta (d)=d^{(-1)}\otimes d^{(0)}$, and $I$ is a right coideal of $D$ such that $C$ is a coalgebra. The partial coaction is given by $\lambda (\bar{d})  ={d_{(2)}}^{(-1)} \otimes \epsilon_C (\overline{d_{(1)}})\overline{{d_{(2)}}^{(0)}}$ as shown in \cite{wang}. 
\end{exmp}

\begin{lemma}\lelabel{comodulecoalgebralemma} Let $H$ be a bialgebra and $C$ be a left partial $H$-comodule coalgebra, with coaction $\lambda :C\rightarrow H\otimes C$ denoted by $\lambda (c)=c^{[-1]} \otimes c^{[0]}$. Then, for every $c\in C$ we have
\beqnast
{c_{(1)}}^{[-1]}\epsilon ({c_{(1)}}^{[0]}) {c_{(2)}}^{[-1]} \otimes {c_{(2)}}^{[0]} 
& = & {c_{(1)}}^{[-1]}{c_{(2)}}^{[-1]}\epsilon ({c_{(2)}}^{[0]}) \otimes {c_{(1)}}^{[0]} \\
& =& c^{[-1]} \otimes c^{[0]} .
\eqnast
\end{lemma}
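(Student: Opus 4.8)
The plan is to derive both identities directly from the counital comodule axiom (PLHCC1), \emph{without} invoking the symmetry axiom (PLHCC3') or the more involved coassociativity axiom (PLHCC3). The only tools needed are the counit axiom of the coalgebra $C$ and the trivial but crucial observation that the counit takes values in the base field $k$, so that any factor of the form $\epsilon(\text{--})$ is a scalar and may be commuted freely past all tensor factors.

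First I would rewrite (PLHCC1) as an identity in $H\otimes C\otimes C$, reading the left-hand side as $(I\otimes\Delta)\circ\lambda(c)$:
\[
c^{[-1]}\otimes {c^{[0]}}_{(1)}\otimes {c^{[0]}}_{(2)} = {c_{(1)}}^{[-1]}{c_{(2)}}^{[-1]}\otimes {c_{(1)}}^{[0]}\otimes {c_{(2)}}^{[0]}.
\]
To obtain the first claimed equality, I would apply $I\otimes\epsilon\otimes I$, i.e. the counit of $C$ on the \emph{middle} tensorand. On the left, the counit axiom of $C$ collapses $\epsilon({c^{[0]}}_{(1)})\,{c^{[0]}}_{(2)}$ to $c^{[0]}$, giving $c^{[-1]}\otimes c^{[0]}$. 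On the right one gets ${c_{(1)}}^{[-1]}{c_{(2)}}^{[-1]}\otimes\epsilon({c_{(1)}}^{[0]})\,{c_{(2)}}^{[0]}$; since $\epsilon({c_{(1)}}^{[0]})\in k$, moving this scalar yields ${c_{(1)}}^{[-1]}\epsilon({c_{(1)}}^{[0]}){c_{(2)}}^{[-1]}\otimes{c_{(2)}}^{[0]}$, which is exactly the first expression, now shown to equal $c^{[-1]}\otimes c^{[0]}$.

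For the second equality I would instead apply $I\otimes I\otimes\epsilon$, placing the counit of $C$ on the \emph{last} tensorand. The left-hand side again collapses by the counit axiom to $c^{[-1]}\otimes c^{[0]}$, while the right-hand side becomes ${c_{(1)}}^{[-1]}{c_{(2)}}^{[-1]}\otimes{c_{(1)}}^{[0]}\epsilon({c_{(2)}}^{[0]})$; commuting the scalar $\epsilon({c_{(2)}}^{[0]})$ gives ${c_{(1)}}^{[-1]}{c_{(2)}}^{[-1]}\epsilon({c_{(2)}}^{[0]})\otimes{c_{(1)}}^{[0]}$, the second claimed expression. Both chains end at $c^{[-1]}\otimes c^{[0]}$, which proves the asserted triple equality.

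I do not expect a genuine obstacle here: the entire content is correct Sweedler bookkeeping together with the single insight that it suffices to read (PLHCC1) after applying the counit of $C$ to one slot, rather than starting from the heavier axiom (PLHCC3). The only point demanding care is keeping the two $C$-slots (indexed by $c_{(1)}$ and $c_{(2)}$) distinct so that the counit is applied to the intended one. It is worth remarking that, since symmetry plays no role, this lemma holds for an arbitrary (not necessarily symmetric) left partial $H$-comodule coalgebra.
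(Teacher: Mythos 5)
Your proof is correct and is essentially identical to the paper's: both start from the identity given by (PLHCC1), namely $c^{[-1]}\otimes {c^{[0]}}_{(1)}\otimes {c^{[0]}}_{(2)} = {c_{(1)}}^{[-1]}{c_{(2)}}^{[-1]}\otimes {c_{(1)}}^{[0]}\otimes {c_{(2)}}^{[0]}$, then apply $I\otimes\epsilon\otimes I$ for the first equality and $I\otimes I\otimes\epsilon$ for the second, using the counit axiom on the left-hand side. Your closing remark that symmetry is not needed is also consistent with the paper, which states the lemma for an arbitrary left partial $H$-comodule coalgebra.
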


\begin{proof} First, by the item (i) of the definition of a left partial $H$-comodule coalgebra, we have 
\begin{equation}
\label{coproductcoaction}
c^{[-1]} \otimes {c^{[0]}}_{(1)} \otimes {c^{[0]}}_{(2)} ={c_{(1)}}^{[-1]} {c_{(2)}}^{[-1]} \otimes {c_{(1)}}^{[0]} \otimes {c_{(2)}}^{[0]} .
\end{equation}
Applying $(I\otimes \epsilon \otimes I)$ on both sides of (\ref{coproductcoaction}), we have
\[
c^{[-1]} \otimes \epsilon ({c^{[0]}}_{(1)}) {c^{[0]}}_{(2)} ={c_{(1)}}^{[-1]} {c_{(2)}}^{[-1]} \otimes \epsilon ({c_{(1)}}^{[0]} ) {c_{(2)}}^{[0]} ,
\]
therefore
\[
c^{[-1]} \otimes c^{[0]} = {c_{(1)}}^{[-1]}\epsilon ({c_{(1)}}^{[0]}) {c_{(2)}}^{[-1]} \otimes {c_{(2)}}^{[0]} .
\]
For the second equality, just apply $(I\otimes I\otimes \epsilon)$ on (\ref{coproductcoaction}).
\end{proof}

\begin{cor}\label{thisisit} Let $H$ be a bialgebra and $C$ be a left partial $H$-comodule coalgebra, with coaction $\lambda :C\rightarrow H\otimes C$ denoted by $\lambda (c)=c^{[-1]} \otimes c^{[0]}$. Then the map $\psi : C\rightarrow H$ given by $\psi (c) =c^{[-1]} 
\epsilon (c^{[0]})$ is an idempotent in the convolution algebra $\mbox{Hom}_k (C,H)$.
\end{cor}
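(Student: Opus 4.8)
The plan is to verify directly that $\psi * \psi = \psi$ in the convolution algebra, where $*$ denotes convolution. Recall that for $f,g \in \Hom_k(C,H)$ the convolution product is $(f*g)(c) = f(c_{(1)})\,g(c_{(2)})$, the product being taken in $H$, and the unit of this algebra is $c \mapsto \epsilon(c)1_H$. Idempotency of $\psi$ therefore amounts to the single identity
\[
(\psi * \psi)(c) = \psi(c_{(1)})\,\psi(c_{(2)}) = {c_{(1)}}^{[-1]} \epsilon({c_{(1)}}^{[0]})\,{c_{(2)}}^{[-1]} \epsilon({c_{(2)}}^{[0]}) = c^{[-1]}\epsilon(c^{[0]}) = \psi(c),
\]
which is what I would establish.

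First I would simply substitute the definition of $\psi$ into the convolution product to arrive at the middle expression ${c_{(1)}}^{[-1]} \epsilon({c_{(1)}}^{[0]})\,{c_{(2)}}^{[-1]} \epsilon({c_{(2)}}^{[0]})$. The key observation is that the equality of the first and last expressions in \leref{comodulecoalgebralemma}, namely
\[
{c_{(1)}}^{[-1]} \epsilon({c_{(1)}}^{[0]})\,{c_{(2)}}^{[-1]} \otimes {c_{(2)}}^{[0]} = c^{[-1]} \otimes c^{[0]},
\]
already packages almost exactly the identity I need. Applying $I\otimes \epsilon$ to both sides of this equation collapses the tensor factor and yields precisely ${c_{(1)}}^{[-1]} \epsilon({c_{(1)}}^{[0]})\,{c_{(2)}}^{[-1]}\epsilon({c_{(2)}}^{[0]}) = c^{[-1]}\epsilon(c^{[0]}) = \psi(c)$, completing the argument.

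Since \leref{comodulecoalgebralemma} does all the real work, there is essentially no obstacle here; the only point requiring care is bookkeeping, making sure that the Sweedler legs of the comultiplication on $C$ and of the partial coaction $\lambda$ are matched up correctly so that the convolution product indeed reproduces the left-hand side of the lemma's identity. In short, idempotency of $\psi$ is an immediate corollary of the lemma, obtained by counit-reduction on one tensor factor.
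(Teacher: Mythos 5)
Your proposal is correct and follows essentially the same route as the paper: the paper's proof also applies $(I\otimes\epsilon)$ to one of the identities of \leref{comodulecoalgebralemma} to obtain $\psi(c)=\psi(c_{(1)})\psi(c_{(2)})=(\psi*\psi)(c)$. Your bookkeeping of the Sweedler legs is accurate, so nothing is missing.
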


\begin{proof} If we apply $(I\otimes \epsilon)$ to any of the equalities obtained in \leref{comodulecoalgebralemma}, then we find
\[
\psi (c) =c^{[-1]} \epsilon (c^{[0]}) = {c_{(1)}}^{[-1]}\epsilon ({c_{(1)}}^{[0]}) {c_{(2)}}^{[-1]} \epsilon ({c_{(2)}}^{[0]}) =\psi (c_{(1)})\psi (c_{(2)})=\psi *\psi (c) ,
\]
which shows that $\psi$ is idempotent with relation to the convolution product.
\end{proof}

\subsection{The partial cosmash coproduct}

Let $C$ be a left partial $H$-comodule coalgebra. Consider 
the subspace $\underline{C\cosmash H}$ of the tensor product $C\otimes H$ 
 spanned by elements of the form
\[
c\cosmash h =c_{(1)}\otimes {c_{(2)}}^{[-1]} \epsilon( {c_{(2)}}^{[0]}) h .
\]
By Corollary \ref{thisisit} it is easy to see that 
\[
c\cosmash h =c_{(1)}\cosmash {c_{(2)}}^{[-1]} \epsilon( {c_{(2)}}^{[0]}) h .
\]

\begin{prop} Let $H$ be a Hopf algebra and $C$ be a left partial $H$-comodule coalgebra. Then the space $\underline{C\cosmash H}$ is a coalgebra with the comultiplication given by 
\[
\HDelta (c\cosmash h) =c_{(1)}\cosmash {c_{(2)}}^{[-1]} h_{(1)}\otimes {c_{(2)}}^{[0]} \cosmash h_{(2)} ,
\]
and counit given by
\[
\Hepsilon (c\cosmash h) =\epsilon_C (c) \epsilon_H (h) .
\]
This coalgebra is will be called partial co-smash coproduct.
\end{prop}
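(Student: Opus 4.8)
The plan is to present $\underline{C\cosmash H}$ as the image of an idempotent on $C\otimes H$ and then verify the coalgebra axioms by systematically reducing every computation modulo this idempotent. Write $\psi(c)=c^{[-1]}\epsilon(c^{[0]})$ for the convolution idempotent of Corollary~\ref{thisisit} and set $p:C\otimes H\to C\otimes H$, $p(c\otimes h)=c_{(1)}\otimes\psi(c_{(2)})h=c\cosmash h$. First I would check that $p$ is idempotent: this is immediate from $\psi\ast\psi=\psi$ together with coassociativity in $C$, so that $\underline{C\cosmash H}=\mathrm{Im}(p)$. The upshot is a clean well-definedness criterion: a linear map given by a formula on $C\otimes H$ descends to $\underline{C\cosmash H}$ exactly when that formula is invariant under $p$, i.e.\ under replacing $c\cosmash h$ by $c_{(1)}\cosmash\psi(c_{(2)})h$. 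Note also that, since each tensor-factor of $\HDelta(c\cosmash h)$ is already written in the form $(\textrm{something})\cosmash(\textrm{something})$, the image of $\HDelta$ automatically lies in $\underline{C\cosmash H}\otimes\underline{C\cosmash H}$, so no separate check is needed there.

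Next I would dispose of the counit and the well-definedness of $\Hepsilon$ and $\HDelta$. The identity $\epsilon_H\circ\psi=\epsilon_C$, which follows at once from (PLHCC2), shows both that $\Hepsilon$ is invariant under $p$ and that $(\Hepsilon\otimes I)\HDelta=\mathrm{id}$, this last computation also using (PLHCC2) in the form $(\epsilon_H\otimes I)\lambda=\mathrm{id}$. The mirror identity $(I\otimes\Hepsilon)\HDelta=\mathrm{id}$ comes out of the redundancy relation $c\cosmash h=c_{(1)}\cosmash\psi(c_{(2)})h$ recorded before the statement. The same redundancy relation, fed through \leref{comodulecoalgebralemma}, yields the invariance of $\HDelta$ under $p$; this is where the two equalities of \leref{comodulecoalgebralemma} are used to absorb the extra factor $\psi(c_{(2)})$ produced by $p$. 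All of these are short, essentially formal manipulations.

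The substantial step is coassociativity, $(\HDelta\otimes I)\HDelta=(I\otimes\HDelta)\HDelta$. I would expand both composites, splitting $c$ into three tensorands and $h$ into three. On the left, the second application of $\HDelta$ lands on the factor $c_{(1)}\cosmash{c_{(2)}}^{[-1]}h_{(1)}$, so it only requires a fresh \emph{single} coaction on a piece of $c$ together with an ordinary coproduct of the $H$-entry; the coassociativity of $\Delta_C$ and $\Delta_H$ then bring it to a symmetric expression. On the right, the second application of $\HDelta$ lands on ${c_{(2)}}^{[0]}\cosmash h_{(2)}$, and here the inner comultiplication must be pushed through the coaction: using (PLHCC1) to distribute $\Delta$ over the coacted element and then (PLHCC3) to resolve the resulting \emph{iterated} coaction. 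The axiom (PLHCC3) introduces exactly one factor of the shape ${x_{(1)}}^{[-1]}\epsilon({x_{(1)}}^{[0]})=\psi(x_{(1)})$, and the whole point is that this spurious $\psi$-factor is reabsorbed by the cosmash reduction (again via the redundancy relation and \leref{comodulecoalgebralemma}), after which both sides coincide with
\[
c_{(1)}\cosmash {c_{(2)}}^{[-1]}{c_{(3)}}^{[-1]}{}_{(1)}h_{(1)}\otimes{c_{(2)}}^{[0]}\cosmash{c_{(3)}}^{[-1]}{}_{(2)}h_{(2)}\otimes{c_{(3)}}^{[0]}\cosmash h_{(3)}.
\]
This matching is the main obstacle: it is precisely where the partial (rather than coassociative) nature of $\lambda$ is felt, and it explains why one must pass to the reduced space $\mathrm{Im}(p)$ in the first place, since the naive smash comultiplication on all of $C\otimes H$ is not coassociative. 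Once this identity is established, $(\underline{C\cosmash H},\HDelta,\Hepsilon)$ is a coalgebra.
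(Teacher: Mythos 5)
Your proposal is correct, and its computational core coincides with the paper's proof: both counit checks use the redundancy relation $c\cosmash h=c_{(1)}\cosmash\psi(c_{(2)})h$ together with (PLHCC2), and coassociativity is settled exactly as in the paper --- the left composite needs only coassociativity of $\Delta_C$ and multiplicativity of $\Delta_H$, while the right composite is reduced using (PLHCC1), then (PLHCC3), and the spurious factor $\psi(c_{(i)})$ so produced is reabsorbed via \leref{comodulecoalgebralemma} and the counit axiom, both sides landing on precisely the expression you display. Where you genuinely depart from the paper is in the scaffolding: you present $\underline{C\cosmash H}$ as the image of the idempotent $p(c\otimes h)=c_{(1)}\otimes\psi(c_{(2)})h$ (idempotency from $\psi\ast\psi=\psi$) and insist on verifying that the defining formulas for $\HDelta$ and $\Hepsilon$ are invariant under $p$, i.e.\ independent of the chosen representative $c\otimes h$ of $c\cosmash h$. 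The paper never addresses this and simply computes on generators, so your treatment is strictly more careful, and the point is not vacuous, since the comultiplication is stated in coordinates of a non-injective presentation. One caveat: you file the $p$-invariance of $\HDelta$ under ``short, essentially formal manipulations'' following from the redundancy relation and \leref{comodulecoalgebralemma}, but carrying it out actually requires (PLHCC1) and (PLHCC3) as well --- one expands the second cosmash factor by the redundancy relation, rewrites with (PLHCC1), resolves the iterated coaction with (PLHCC3), and only then absorbs $\psi(c_{(i)})$ with the lemma --- so this step is comparable in length and in the axioms it invokes to the coassociativity computation itself. It does go through with exactly the toolkit you name plus those two axioms, so this is an underestimate of effort rather than a gap.
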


\begin{proof} Let us check the counit axioms. First, we have
\begin{eqnarray*}
(I\otimes \Hepsilon )\circ \HDelta  (c\cosmash h) & = & (I\otimes \Hepsilon ) (c_{(1)}\cosmash {c_{(2)}}^{[-1]} h_{(1)}\otimes {c_{(2)}}^{[0]} \cosmash h_{(2)} ) \\
& = & c_{(1)}\cosmash {c_{(2)}}^{[-1]} \epsilon ( {c_{(2)}}^{[0]}) h_{(1)} 
\epsilon ( h_{(2)}) \\
& = & c_{(1)}\cosmash {c_{(2)}}^{[-1]} \epsilon( {c_{(2)}}^{[0]}) h 
 =  c\cosmash h .
\end{eqnarray*}
And on the other hand
\begin{eqnarray*}
(\Hepsilon \otimes I)\circ \HDelta  (c\cosmash h) & = & 
( \Hepsilon \otimes I) (c_{(1)}\cosmash {c_{(2)}}^{[-1]} h_{(1)}\otimes {c_{(2)}}^{[0]} \cosmash h_{(2)} ) \\
& = & \epsilon (c_{(1)})\epsilon ( {c_{(2)}}^{[-1]}) {c_{(2)}}^{[0]} \cosmash 
\epsilon ( h_{(1)}) h_{(2)} \\
& = & \epsilon (c_{(1)} ) c_{(2)} \cosmash h 
 =  c\cosmash h .
\end{eqnarray*}

For the coassociativity, on one hand, we have
\begin{eqnarray*}
& \,  & (\HDelta \otimes I)\circ \HDelta (c\cosmash h)  = 
(\HDelta \otimes I)(c_{(1)}\cosmash {c_{(2)}}^{[-1]} h_{(1)}\otimes {c_{(2)}}^{[0]} \cosmash h_{(2)} ) \\
& = & c_{(1)}\cosmash {c_{(2)}}^{[-1]} {{c_{(3)}}^{[-1]}}_{(1)} h_{(1)}\otimes {c_{(2)}}^{[0]} \cosmash {{c_{(3)}}^{[-1]}}_{(2)} h_{(2)}  \otimes {c_{(3)}}^{[0]} \cosmash h_{(3)} .
\end{eqnarray*}
On the other hand
\begin{eqnarray*}
& \,  & (I\otimes \HDelta )\circ \HDelta (c\cosmash h)  = 
(I\otimes \HDelta )(c_{(1)}\cosmash {c_{(2)}}^{[-1]} h_{(1)}\otimes {c_{(2)}}^{[0]} \cosmash h_{(2)} ) \\
& = & c_{(1)}\cosmash {c_{(2)}}^{[-1]} h_{(1)} \otimes {{c_{(2)}}^{[0]}}_{(1)} \cosmash 
{{{c_{(2)}}^{[0]}}_{(2)}}^{[-1]} h_{(2)} \otimes {{{c_{(2)}}^{[0]}}_{(2)}}^{[0]} \cosmash h_{(3)} \\
& = & c_{(1)}\cosmash {c_{(2)}}^{[-1]} {c_{(3)}}^{[-1]}h_{(1)} \otimes {c_{(2)}}^{[0]} \cosmash {c_{(3)}}^{[0][-1]} h_{(2)} \otimes {c_{(3)}}^{[0][0]} \cosmash h_{(3)} \\
& = & c_{(1)}\cosmash {c_{(2)}}^{[-1]} {c_{(3)}}^{[-1]} \epsilon ({c_{(3)}}^{[0]}) {{c_{(4)}}^{[-1]}}_{(1)} h_{(1)} \otimes  {c_{(2)}}^{[0]} \cosmash {{c_{(4)}}^{[-1]}}_{(2)} h_{(2)} \otimes 
 {c_{(4)}}^{[0]} \cosmash h_{(3)} \\
& = & c_{(1)}\cosmash {c_{(2)}}^{[-1]} \epsilon ({{c_{(2)}}^{[0]}}_{(2)}){{c_{(3)}}^{[-1]}}_{(1)} h_{(1)} \otimes {{c_{(2)}}^{[0]}}_{(1)} \cosmash {{c_{(3)}}^{[-1]}}_{(2)} h_{(2)} \otimes 
 {c_{(3)}}^{[0]} \cosmash h_{(3)} \\
& = & c_{(1)}\cosmash {c_{(2)}}^{[-1]} {{c_{(3)}}^{[-1]}}_{(1)} h_{(1)}\otimes {c_{(2)}}^{[0]} \cosmash {{c_{(3)}}^{[-1]}}_{(2)} h_{(2)}  \otimes {c_{(3)}}^{[0]} \cosmash h_{(3)} ,
\end{eqnarray*}
where in the third equality we used the axiom (i), in the fourth equality, the axiom (iv), in the fifth equality, we used the axiom (i) again and in the sixth equality, only the counit axiom.
\end{proof}

\subsection{Dualities}

\subsubsection*{Comodule coalgebras versus comodule algebras}

\begin{thm}
Let $H$ be a Hopf algebra and $(-,-):A\ot C\to k$ be a non-degenerate dual pairing between the coalgebra $C$ and the algebra $A$. 
Suppose that there are two linear maps $\lambda : C\rightarrow H\otimes C$, and $\rho :A\rightarrow A\otimes H$, given respectively by $\lambda (c)=c^{[-1]}\otimes c^{[0]}$ and $\rho (a)=a^{[0]}\otimes a^{[1]}$ and satisfying the relation 
\begin{equation}
\eqlabel{AcomodCcomod}
\left( a^{[0]} ,c \right) a^{[1]} =c^{[-1]} \left( a, c^{[0]} \right) ,
\end{equation}
for all $a\in A$ and $c\in C$. Then $A$ is a right $H$ comodule algebra if, and only if $C$ is a left $H$ comodule coalgebra.
\end{thm}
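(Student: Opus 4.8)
The plan is to verify, one axiom at a time, that each of the four defining conditions of a right $H$-comodule algebra on $(A,\rho)$ is equivalent, under the pairing, to the corresponding defining condition of a left $H$-comodule coalgebra on $(C,\lambda)$; since each equivalence will use only \equref{AcomodCcomod}, the pairing identities $(ab,c)=(a,c_{(1)})(b,c_{(2)})$ and $(1_A,-)=\epsilon_C$, and non-degeneracy (not the remaining structural axioms), matching them pairwise yields the full bi-implication. Writing $\rho(a)=a^{[0]}\otimes a^{[1]}$ and $\lambda(c)=c^{[-1]}\otimes c^{[0]}$, recall that $A$ is a right $H$-comodule algebra exactly when $\rho$ is multiplicative, $a^{[0]}\epsilon(a^{[1]})=a$, $(\rho\otimes I)\rho=(I\otimes\Delta)\rho$, and $\rho(1_A)=1_A\otimes 1_H$; dually $C$ is a left $H$-comodule coalgebra exactly when $c^{[-1]}\otimes {c^{[0]}}_{(1)}\otimes {c^{[0]}}_{(2)}={c_{(1)}}^{[-1]}{c_{(2)}}^{[-1]}\otimes {c_{(1)}}^{[0]}\otimes {c_{(2)}}^{[0]}$, $\epsilon(c^{[-1]})c^{[0]}=c$, $(I\otimes\lambda)\lambda=(\Delta\otimes I)\lambda$, and $c^{[-1]}\epsilon(c^{[0]})=\epsilon(c)1_H$. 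I will treat them in the order multiplicativity, counit, coassociativity, unit.

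The only non-formal ingredient is the transfer of identities across the pairing, for which two consequences of non-degeneracy will be used repeatedly. Since $k$ is a field and the induced map $\Phi:A\to C^*$ is injective, the map $\Phi\otimes I_V:A\otimes V\to\Hom(C,V)$ is injective for every $k$-vector space $V$; hence an identity in $A\otimes V$ may be checked after pairing the $A$-tensorand with all $c\in C$ (I will call this \emph{testing on the $A$-side}). Dually, injectivity of $\Psi:C\to A^*$ lets one verify an identity in $V\otimes C$ or $V\otimes C\otimes C$ after pairing the $C$-tensorand(s) with all $a\in A$ (resp.\ with all $a,b\in A$); I will call this \emph{testing on the $C$-side}.

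Now the four matchings proceed as follows. Applying $\epsilon_H$ to \equref{AcomodCcomod} gives $(a^{[0]},c)\epsilon(a^{[1]})=\epsilon(c^{[-1]})(a,c^{[0]})$; pairing the counit axiom $a^{[0]}\epsilon(a^{[1]})=a$ with $c$ rewrites it as $(a,c)=(a,\epsilon(c^{[-1]})c^{[0]})$, which by testing on the $C$-side is precisely $\epsilon(c^{[-1]})c^{[0]}=c$. For the unit axiom, setting $a=1_A$ in \equref{AcomodCcomod} and using $(1_A,-)=\epsilon_C$ yields $(1_A^{[0]},c)\,1_A^{[1]}=c^{[-1]}\epsilon(c^{[0]})$; comparing the right-hand side with $\epsilon(c)1_H=(1_A,c)1_H$ shows, via injectivity of $\Phi\otimes I_H$, that $\rho(1_A)=1_A\otimes 1_H$ holds if and only if $c^{[-1]}\epsilon(c^{[0]})=\epsilon(c)1_H$. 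For multiplicativity, I pair the $A$-leg of $\rho(ab)=\rho(a)\rho(b)$ with $c$, using $(ab,c)=(a,c_{(1)})(b,c_{(2)})$ and \equref{AcomodCcomod} on the left, and $(a^{[0]}b^{[0]},c)=(a^{[0]},c_{(1)})(b^{[0]},c_{(2)})$ together with \equref{AcomodCcomod} on each factor on the right; both sides collapse exactly to the two sides of the comultiplicativity axiom paired on the $C$-side against $a\otimes b$. Finally, for coassociativity I pair the $A$-leg of $(\rho\otimes I)\rho(a)=(I\otimes\Delta)\rho(a)$ with $c$ and apply \equref{AcomodCcomod} twice on the left (first to $a^{[0]}$, then to $a$ against $c^{[0]}$) and once, combined with $\Delta_H$, on the right, whereupon the two sides become the $a$-pairings of $(I\otimes\lambda)\lambda(c)$ and $(\Delta\otimes I)\lambda(c)$. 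In each case non-degeneracy on the appropriate side converts the paired identity into the unpaired one, and every step is reversible.

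I expect the main obstacle to be neither the algebra of these substitutions, which are routine Sweedler manipulations, nor any single non-degeneracy argument, but rather the discipline of keeping the bookkeeping straight: one must pair consistently on the correct leg, invoke the correct half of the non-degeneracy (injectivity of $\Phi$ versus of $\Psi$), and use that over a field the functor $-\otimes V$ preserves injectivity, so that testing one tensorand against all dual elements genuinely detects equality. Once this is organized, each comodule-algebra axiom is equivalent to its comodule-coalgebra counterpart, and the stated equivalence follows immediately.
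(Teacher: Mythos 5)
Your proposal is a correct proof of the statement \emph{as literally worded}, i.e.\ for global comodule algebras and global comodule coalgebras: each of the four axioms on one side transposes across the pairing to the corresponding axiom on the other side using only the compatibility relation, the pairing identities and non-degeneracy, and your bookkeeping (injectivity of $\Phi$ and $\Psi$, and the fact that tensoring with a vector space preserves injectivity over a field) is sound, in both directions.

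The problem is that the paper's own proof shows the theorem is not about global structures: the word ``partial'' has evidently been dropped from the statement (the theorem sits in the section on partial comodule coalgebras). The paper's proof assumes that $A$ is a \emph{symmetric right partial} $H$-comodule algebra, its central step is the partial coassociativity axiom (PRHCA3),
\[
a^{[0][0]}\ot a^{[0][1]}\ot a^{[1]}=a^{[0]}1^{[0]}\ot {a^{[1]}}_{(1)}1^{[1]}\ot {a^{[1]}}_{(2)},
\]
and it concludes with the symmetric partial axiom (PLHCC3$'$) for $C$. Relative to that intended statement your argument has a concrete gap, located exactly where the partial theory differs from the global one. First, your fourth matching --- $\rho(1_A)=1_A\ot 1_H$ versus $c^{[-1]}\epsilon(c^{[0]})=\epsilon(c)1_H$ --- pairs off precisely the two conditions that characterize globality (the paper remarks, after its definition, that a partial comodule coalgebra satisfying the latter identity is automatically a global one); in the partial theorem these are not axioms to be verified and in general fail. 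Second, and more seriously, the coassociativity-type axioms to be dualized are not $(\rho\ot I)\rho=(I\ot\Delta)\rho$ and $(I\ot\lambda)\lambda=(\Delta\ot I)\lambda$, but their weakened forms carrying the idempotent $\rho(1_A)$, respectively the convolution-idempotent terms ${c_{(2)}}^{[-1]}\epsilon({c_{(2)}}^{[0]})$. Transporting (PRHCA3) across the pairing is then no longer your clean two-substitution step: the extra factor $\rho(1_A)$ must itself be paired against $c_{(2)}$, using multiplicativity of the pairing and the compatibility relation applied to $1_A$, and this is what produces the $\epsilon({c_{(2)}}^{[0]}){c_{(2)}}^{[-1]}$ insertions appearing in (PLHCC3$'$); that chain of identities is the entire content of the paper's proof and has no counterpart in your argument. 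Your first two matchings (multiplicativity versus (PLHCC1), and the two counit conditions) do carry over verbatim to the partial setting, so your framework could be repaired, but the heart of the partial statement is exactly the step your proposal omits.
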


\begin{proof} 
Suppose that $A$ is a symmetric right $H$ comodule algebra, and consider $a\in A$ and $c\in C$, then
\beqnast
 c^{[-1]} \otimes c^{[0][-1]} \left( a, c^{[0][0]} \right)  =  c^{[-1]} \otimes \left( a^{[0]}, c^{[0]} \right) a^{[1]} =\left( a^{[0][0]}, c \right)  a^{[0][1]} \otimes a^{[1]} \\
&&\hspace{-10cm} =  \left( a^{[0]}1^{[0]}, c \right)  {a^{[1]}}_{(1)} 1^{[1]} \otimes {a^{[1]}}_{(2)} =\left( a^{[0]} , c_{(1)} \right) \left( 1^{[0]}, c_{(2)} \right)  {a^{[1]}}_{(1)} 1^{[1]} \otimes {a^{[1]}}_{(2)} \\
&&\hspace{-10cm} =  \left( a^{[0]} , c_{(1)} \right) {a^{[1]}}_{(1)} {c_{(2)}}^{[-1]}\left( 1, {c_{(2)}}^{[0]} \right)    \otimes {a^{[1]}}_{(2)} \\
&&\hspace{-10cm} =  {{c_{(1)}}^{[-1]}}_{(1)} \left( a , {c_{(1)}}^{[0]} \right) {a^{[1]}}_{(1)} {c_{(2)}}^{[-1]} \epsilon ({c_{(2)}}^{[0]} )    \otimes {{c_{(1)}}^{[-1]}}_{(2)} \\
&&\hspace{-10cm} =  {{c_{(1)}}^{[-1]}}_{(1)}  {a^{[1]}}_{(1)} {c_{(2)}}^{[-1]} \epsilon ({c_{(2)}}^{[0]} )    \otimes {{c_{(1)}}^{[-1]}}_{(2)} \left( a , {c_{(1)}}^{[0]} \right).
\eqnast
As this equality holds for any $a\in A$, we conclude by the nondegeneracy of the pairing, that 
\[
c^{[-1]} \otimes c^{[0][-1]} \otimes c^{[0][0]} = {{c_{(1)}}^{[-1]}}_{(1)}  {a^{[1]}}_{(1)} {c_{(2)}}^{[-1]} \epsilon ({c_{(2)}}^{[0]} )    \otimes {{c_{(1)}}^{[-1]}}_{(2)} \otimes {c_{(1)}}^{[0]} .
\]
Therefore, C is a symmetric left partial $H$ comodule coalgebra.
\end{proof}

\subsubsection*{Comodule coalgebras versus module coalgebras}

\begin{thm}\thlabel{modulevscomodulecoalg}
Consider a dual pairing of Hopf algebras $\bk{-,-}:H\ot K\to k$ and let $C$ be a left partial $K$-comodule coalgebra.
Then the map
\[
\begin{array}{rccc}
\cdot : & C \ot H & \rightarrow & C\\
\, & c\otimes h & \mapsto & \sum \bk{h,c^{[-1]}} c^{[0]}
\end{array}
\]
turns $C$ into a right partial $H$-module coalgebra. This construction yields a functor from the category of left partial $K$-comodule coalgebras to the category of right partial $H$-module coalgebras. 
If the dual pairing $\bk{-,-}$ is moreover non-degenerate, then the above functor corestricts to an isomorphism of categories between the category of left partial $K$-comodule coalgebras to the category of rational right partial $H$-module coalgebras, which are those for which $c\cdot H$ is a finitely generated $k$-module for all $c\in C$.
\end{thm}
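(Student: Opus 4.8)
The plan is to establish the three assertions in turn: that the formula defines a right partial $H$-module coalgebra, that the assignment is functorial, and that under non-degeneracy it restricts to an isomorphism onto the rational objects. For the first, I would verify the right-handed versions of (PLHMC1)--(PLHMC3') directly, feeding each one comodule-coalgebra axiom together with one compatibility of the pairing. Writing $c\cdot h=\bk{h,c^{[-1]}}c^{[0]}$, the unit axiom $c\cdot 1_H=c$ is immediate from (PLHCC2) and $\bk{1_H,x}=\epsilon_K(x)$, while $\Delta(c\cdot h)=(c_{(1)}\cdot h_{(1)})\otimes(c_{(2)}\cdot h_{(2)})$ follows by applying (PLHCC1) and then $\bk{h,xy}=\bk{h_{(1)},x}\bk{h_{(2)},y}$. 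For the higher axioms I expand $(c\cdot k)\cdot h=\bk{k,c^{[-1]}}\bk{h,c^{[0][-1]}}c^{[0][0]}$ and substitute the iterated coaction $(I\otimes\lambda)\lambda(c)$; using $\bk{k_{(2)}h,x}=\bk{k_{(2)},x_{(1)}}\bk{h,x_{(2)}}$ one finds that (PLHCC3) yields the symmetric module identity $(c\cdot k)\cdot h=\epsilon(c_{(1)}\cdot k_{(1)})(c_{(2)}\cdot (k_{(2)}h))$, whereas (PLHCC3') yields the non-symmetric one $(c\cdot k)\cdot h=(c_{(1)}\cdot (k_{(1)}h))\epsilon(c_{(2)}\cdot k_{(2)})$. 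Thus, by the standing symmetry convention on partial (co)actions, both module axioms hold and $C$ becomes a symmetric right partial $H$-module coalgebra; the cross-over (each comodule axiom producing the opposite type of module axiom) is exactly the phenomenon already seen in \prref{coactiontoaction}. Functoriality is then formal: if $f\colon C\to C'$ is a morphism of $K$-comodule coalgebras, i.e.\ a coalgebra map with $\lambda'\circ f=(I\otimes f)\circ\lambda$, then $f(c\cdot h)=\bk{h,c^{[-1]}}f(c^{[0]})=f(c)\cdot h$.

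For the isomorphism I would mirror \thref{actiontocoaction}. Denote by $\Phi$ the functor just constructed. Rationality of its image is clear: writing $\lambda(c)=\sum_i x_i\otimes c_i$ gives $c\cdot H\subseteq\mathrm{span}_k\{c_i\}$, so $c\cdot H$ is finitely generated. For the candidate inverse $\Psi$, start from a rational right partial $H$-module coalgebra $C$ and consider $\beta\colon C\to\Hom(H,C)$, $\beta(c)(h)=c\cdot h$, together with $\alpha\colon K\otimes C\to\Hom(H,C)$, $\alpha(x\otimes c')(h)=\bk{h,x}c'$. Up to the flip, $\alpha$ is the $\alpha$-condition map $\alpha_{C,\phi(H)}$ with $M=K$ and the subspace $\phi(H)\subseteq K^*$; since $k$ is a field $K$ is locally projective, and non-degeneracy makes $\phi(H)$ dense, so $\alpha$ is injective. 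Rationality of $C$ makes each $\beta(c)$ finite-rank, and density of $\psi(K)$ in $H^*$ places $\mathrm{image}(\beta)$ inside $\mathrm{image}(\alpha)$; hence there is a unique $\lambda\colon C\to K\otimes C$ with $\alpha\circ\lambda=\beta$, that is $c\cdot h=\bk{h,c^{[-1]}}c^{[0]}$, and this is the definition of $\Psi$.

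It remains to check that $\lambda$ satisfies the comodule-coalgebra axioms and that $\Phi$ and $\Psi$ are mutually inverse. Each coaction axiom is recovered from the corresponding module-coalgebra axiom by pairing the $K$-slots against $H$ and then dropping the test elements by injectivity of $\alpha$: for example, pairing the $K$-factor of $(I\otimes\Delta)\lambda(c)$ with $h$ gives $\Delta(c\cdot h)$ on one side and $(c_{(1)}\cdot h_{(1)})\otimes(c_{(2)}\cdot h_{(2)})$ on the other, so (PLHCC1) follows from comultiplicativity of the action. The mutual-inverse property is then formal: $\Phi\circ\Psi=\mathrm{id}$ holds by the defining relation $\alpha\circ\lambda=\beta$, and $\Psi\circ\Phi=\mathrm{id}$ because two coactions with the same image under the injective $\alpha$ must coincide; the same injectivity shows that a morphism of the module coalgebras is automatically $K$-colinear. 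I expect the only genuinely delicate point to be axiom (PLHCC3), where the identity to be transported lives in $K\otimes K\otimes C$ (an \emph{iterated} coaction): concluding it requires the $\alpha$-condition in both $K$-slots at once, i.e.\ injectivity of the two-fold map $K\otimes K\otimes C\to\Hom(H\otimes H,C)$, which again holds because $K$ is locally projective over the field $k$ and $\phi(H)$ is dense in $K^*$.
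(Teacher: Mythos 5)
Your proposal is correct and takes exactly the approach the paper intends: the paper states \thref{modulevscomodulecoalg} without proof, treating it as the routine dualization already carried out for algebras in \prref{coactiontoaction} and \thref{actiontocoaction}, and your argument reproduces that pattern faithfully — direct verification of the partial module-coalgebra axioms with the correct symmetric/non-symmetric crossover (the same crossover the paper exhibits in \prref{coactiontoaction}), formal functoriality, and the locally-projective/$\alpha$-condition argument for the inverse functor on rational objects, including the two-fold $\alpha$-map needed for (PLHCC3). Even your delicate step — rationality plus density of $\psi(K)$ in $H^*$ placing $\mathrm{image}(\beta)$ inside $\mathrm{image}(\alpha)$ — is precisely the step the paper itself invokes in the proof of \thref{actiontocoaction} and asserts as known in its preliminaries on rational pairings, so your proof is as complete as the paper's own treatment of the analogous results.
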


\subsubsection*{Comodule coalgebras versus module algebras}

\begin{thm}\thlabel{comcoalgvsmodalg}
Let $\bk{-,-}:H\ot K\to k$ be a dual pairing between the Hopf algebras $H$ and $K$ and $(-,-):A\ot C\to k$ be a non-degenerate dual pairing between the algebra $A$ and the coalgebra $C$.
If $C$ is a left $K$-comodule coalgebra with coaction $\rho:C\to K\ot C,\ \rho(c)=c^{[-1]}\ot c^{[0]}$, then $A$ is a partial left $H$-module algebra, with action given by 
\[
(h\cdot a, c)=\bk{h,c^{[-1]}}(a,c^{[0]}),
\]
for all $a\in A$, $c\in C$ and $h\in H$. Furthermore
\begin{itemize}
\item Under these conditions, there is a pairing $\lllangle - , - \rrrangle: \underline{A\# H} \otimes \underline{C\cosmash K} \rightarrow k$ between the algebra $\underline{A\# H}$ and the coalgebra $\underline{C\cosmash K}$ given by $\lllangle a\# h, x\cosmash \xi \rrrangle = (a(h_{(1)}\cdot 1_A),x)
\langle h_{(2)},\xi \rangle $;
\item  if the pairing between $H$ and $K$ is moreover non-degenerate then there is a bijective correspondence between the structures of rational left $H$-module algebra on $A$ and the structures of left $K$-comodule coalgebra on $C$. 
\end{itemize}
\end{thm}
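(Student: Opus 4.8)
The plan is to build the whole statement on the two duality theorems already established, \thref{modulevscomodule} and \thref{modulevscomodulecoalg}, rather than to re-verify the partial module algebra axioms by hand. First I would observe that, since $C$ is a left $K$-comodule coalgebra (hence in particular a left partial $K$-comodule coalgebra), \thref{modulevscomodulecoalg} already equips $C$ with a right partial $H$-module coalgebra structure $c\cdot h=\bk{h,c^{[-1]}}c^{[0]}$. With this in hand the defining relation for the action on $A$ rewrites as
\[
(h\cdot a,c)=\bk{h,c^{[-1]}}(a,c^{[0]})=(a,c\cdot h),
\]
which is exactly the pairing relation of \thref{modulevscomodule} read in the opposite handedness (a right module coalgebra on $C$ versus a left module algebra on $A$). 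Left non-degeneracy of $(-,-)$ guarantees that $h\cdot a$ is unique when it exists; I would record that existence is automatic when $A=C^*$ and, more generally, in the rational setting of the last item, where the functional $c\mapsto(a,c\cdot h)$ is forced to lie in the image of the map $A\to C^*$. Granting this, the left--right mirror of \thref{modulevscomodule} transports the (symmetric) right partial $H$-module coalgebra axioms on $C$ to the (symmetric) left partial $H$-module algebra axioms (PLA1)--(PLA3') on $A$, which settles the first assertion.

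For the pairing in the first item, I would introduce an auxiliary functional $\Theta_{a,h}\colon C\ot K\to k$ by $\Theta_{a,h}(c\ot\xi)=(a(h_{(1)}\cdot\um),c)\bk{h_{(2)},\xi}$ and set $\lllangle a\# h,x\cosmash\xi\rrrangle=\Theta_{a,h}(x\cosmash\xi)$. The first task is well-definedness in both arguments: invariance under the smash relation $a\# h=a(h_{(1)}\cdot\um)\# h_{(2)}$ of \eqref{propsmash} follows from (PLA2) applied to $\um$, while invariance under the cosmash relation $x\cosmash\xi=x_{(1)}\cosmash {x_{(2)}}^{[-1]}\epsilon({x_{(2)}}^{[0]})\xi$ follows from Corollary~\ref{thisisit} together with the defining relation for $h\cdot\um$. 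The second task is to check that $\lllangle-,-\rrrangle$ is a dual pairing of the algebra $\underline{A\# H}$ with the coalgebra $\underline{C\cosmash K}$, that is $\lllangle XY,z\rrrangle=\lllangle X,z_{(1)}\rrrangle\lllangle Y,z_{(2)}\rrrangle$ and $\lllangle 1,z\rrrangle=\Hepsilon(z)$; here I would expand the left-hand side with the smash multiplication \eqref{propsmash2} and the right-hand side with the comultiplication $\HDelta(x\cosmash\xi)=x_{(1)}\cosmash {x_{(2)}}^{[-1]}\xi_{(1)}\ot {x_{(2)}}^{[0]}\cosmash\xi_{(2)}$, and reconcile the two by repeated use of $(h\cdot a,c)=\bk{h,c^{[-1]}}(a,c^{[0]})$, the axioms (PLA2) and (PLA3'), and the compatibility of the Hopf pairing with products and coproducts.

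For the bijective correspondence in the last item, I would assume $\bk{-,-}$ non-degenerate and chain two bijections. The relation $(h\cdot a,c)=(a,c\cdot h)$, via the mirror of \thref{modulevscomodule} and right non-degeneracy of $(-,-)$, gives a bijection between rational left partial $H$-module algebra structures on $A$ and rational right partial $H$-module coalgebra structures on $C$; \thref{modulevscomodulecoalg} then provides, under non-degeneracy of $\bk{-,-}$, an isomorphism between the latter and left partial $K$-comodule coalgebra structures on $C$. Composing these yields the stated correspondence, and mutual inverseness is inherited from the mutual inverseness in the two quoted results. The delicate point is that the two notions of rationality -- $h\cdot a=\sum_i\varphi^i(h)a_i$ on the algebra side and finite generation of $c\cdot H$ on the coalgebra side -- must match under the transpose, which I would verify using non-degeneracy of $(-,-)$ to pass between finite dual bases on $A$ and on $C$.

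The main obstacle I expect is twofold. Conceptually, the subtle point is the well-definedness and existence of $h\cdot a$ as a genuine element of $A$ (and not merely a functional on $C$), which is precisely what forces the rationality hypothesis in the last item and must be handled honestly. Computationally, the heaviest step is the multiplicativity of $\lllangle-,-\rrrangle$: matching $\lllangle(a\# h)(b\# k),x\cosmash\xi\rrrangle$ against $\lllangle a\# h,(x\cosmash\xi)_{(1)}\rrrangle\lllangle b\# k,(x\cosmash\xi)_{(2)}\rrrangle$ requires threading the partial coaction of $K$ on $C$ through both the smash product and the cosmash coproduct at the same time, and it is here that the symmetry axiom (PLHCC3'), equivalently (PLA3'), is indispensable.
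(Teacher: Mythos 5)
Your proposal is correct and takes essentially the same route as the paper: the paper's own proof simply derives the module-algebra structure and the bijective correspondence by chaining \thref{modulevscomodule} and \thref{modulevscomodulecoalg} (in the mirrored handedness you make explicit), and leaves the verification that $\lllangle - , - \rrrangle$ is a dual pairing to the reader as a ``very technical'' computation. Your outline of that verification, together with your flagging of the existence/rationality subtlety for $h\cdot a$, only supplies detail that the paper omits.
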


\begin{proof}
The first and last statement follows from 
\thref{modulevscomodule}
and \thref{modulevscomodulecoalg}. The details to verify that $\lllangle - , - \rrrangle$ is a dual pairing are left to the reader, we warn however that computations become very technical.
\end{proof}

\section{Conclusions and outlook}\selabel{conclusions}

In this paper, we introduced the dual notions of module algebras and made several algebraic constructions with them. Let us summarize this in the following table
\begin{center}
\begin{tabular}{c|c|c}
& algebra $A$ & coalgebra $C$ \\
\hline 
$H$-module & $H$-module algebra $A$ & $H$-module coalgebra $C$ \\
\hline
$K$-comodule & $K$-comodule algebra $A$ & $K$-comodule coalgebra $C$\\
\end{tabular}\end{center}
If there is a non-degenerate pairing between the algebra $A$ and the coalgebra $C$, then one can use this duality to relate the structures from the right column with those the left column. If the Hopf algebras $H$ and $K$ are in duality, then the associated pairing between them can be used to move from the lower row to the upper row and if the pairing is non-degenerate and one restricts to rational modules one can also move the other way around.

In the general case, when no commutativity or cocommutativity conditions are imposed, each of these notions leads to an algebraic construction as follows (we keep same rows and collumns)
\begin{center}
\begin{tabular}{c|c|c}
& algebra $A$ & coalgebra $C$ \\ \hline
$H$-module & smash product algebra $\ul{A\# H}$ & $C$-ring $\ul{H\ot C}$ \\ \hline
$K$-comodule & $A$-coring $A\ul \ot K$ & cosmash coproduct coalgebra $\ul{C\cosmash K}$\\
\end{tabular}\end{center}
Furthermore, if one adds commutativity conditions, then the obtained structure is even richer. In particular, we found that if $A$ is a commutative algebra, $H$ is a cocommutative Hopf algebra and $K$ is a commutative Hopf algebra, then $\ul{A\# H}$ and $A\ul \ot K$ are Hopf algebroids and the dualities from the first table are lead to a skew pairing between these Hopf algebroids. 

In fact, we believe that the structure is even richer than what we obtained so far. First, we would like to see all dualities that appear in the first table, also to be apparent in the second table. Indications that this is true are given by the duality (without commutativity constraints) between the $A$-coring structure of $A\ul \ot K$ and the $A$-ring structure of $\ul{A\# H}$ (see \leref{splitcoring}) and the coalgebra structure of $\ul{C\cosmash K}$ and the ring structure of $\ul{A\# H}$ (see \thref{comcoalgvsmodalg}). 
Furthermore, we expect that if $C$ is supposed to be a cocommutative coalgebra, $H$ is cocommutative and $K$ is commutative then $\ul{H\ot C}$ and $\ul{C\cosmash K}$ are Hopf coalgebroids, and all dualities also be brought to this level. All this is subject for future investigations.

\section*{Acknowledgments}

One of the authors, E.\ Batista, is supported by CNPq (Ci\^{e}ncia sem Fronteiras), proc n\b{o} 236440/2012-8, and he would like to thank the {\em D\'epartement de Math\'ematique} of the {\em Universit\'e Libre de Bruxelles} for their kind
hospitality.  E.\ Batista is also partially supported by Funda\c{c}\~ao Arauc\'aria, project n. 490/16032. The last author would like to thank the FNRS for the CDR ``Symmetries of non-compact non-commutative spaces: Coactions of generalized Hopf algebras.'' that partially supported this collaboration. We both would like to thank to Marcelo Muniz S. Alves for fruitful discussions along the elaboration of this paper. We would like to thank Felipe Castro and Glauber Quadros for their useful remarks concerning the duality results presented here.

\newpage
\appendix

\section{Some additional proofs}

In this appendix we provide some additional proofs that were omitted in the final journal version of the paper. We leave them here for convenience of the interested reader.

\begin{proof} [ Proof of \leref{Takprod}]
\ul{(2)}. Consider, $a\# h \in \underline{A\# H}$ and $b\in A$, then
\beqnast
b\triangleright (a\# h_{(1)}) \otimes^r_A 1_A \# h_{(2)} & = & (ba\# h_{(1)}) \otimes^r_A 1_A \# h_{(2)} \\
& = & a b(h_{(1)}\cdot 1_A)\# h_{(2)} \otimes^r_A 1_A \# h_{(3)} \\
& = & a(h_{(1)}\cdot 1_A)(h_{(2)}S(h_{(3)})\cdot b)\# h_{(4)} 
\otimes^r_A 1_A \# h_{(5)}\\
& = & (a (h_{(1)}\cdot (S(h_{(2)})\cdot b))\# h_{(3)}) \otimes^r_A 1_A \# h_{(4)} \\
& = & (a \# h_{(1)})\blacktriangleleft (S(h_{(2)})\cdot b) \otimes^r_A 1_A \# h_{(3)} \\
& = & a \# h_{(1)}\otimes^r_A (S(h_{(2)})\cdot b) \blacktriangleright (1_A \# h_{(3)}) \\
& = & a \# h_{(1)}\otimes^r_A  (h_{(3)}\cdot (S(h_{(2)})\cdot b))\# h_{(4)}\\
& = & a \# h_{(1)}\otimes^r_A  (h_{(2)}\cdot 1_A) (h_{(3)}S(h_{(4)})\cdot b)\# h_{(5)}\\
& = & a \# h_{(1)}\otimes^r_A  (h_{(2)}\cdot 1_A)b\# h_{(3)}\\
& = & a \# h_{(1)}\otimes^r_A  b\# h_{(2)}\\
& = & a \# h_{(1)}\otimes^r_A (1_A \# h_{(2)})\triangleleft b .
\eqnast
Hence the image of $\ud_r$ lies in the right Takeuchi product. The proof that $\ud_r$ is an algbra morphim is very similar.
\end{proof}

\begin{proof}[ Proof of \thref{Hoidpartialaction}]
We will show that $\mathcal{S}$ is an anti-algebra morphism. Indeed, taking $a\# h$ and $b\#k$ in $\underline{A\# H}$, we have
\beqnast
\mathcal{S}((a\# h)(b\# k)) & = & \mathcal{S}(a(h_{(1)}\cdot b)\# h_{(2)}k) \\
& = & (S(h_{(3)}k_{(2)})\cdot (a(h_{(1)}\cdot b)))\# S(h_{(2)}k_{(1)}) \\
& = & (S(h_{(4)}k_{(3)})\cdot  a)(S(h_{(3)}k_{(2)})\cdot (h_{(1)}\cdot b))\# S(h_{(2)}k_{(1)}) \\
& = & (S(h_{(4)}k_{(3)})\cdot  a)(S(k_{(2)})S(h_{(3)})h_{(1)}\cdot b)\# S(h_{(2)}k_{(1)}) \\
& = & (S(h_{(4)}k_{(3)})\cdot  a)(S(k_{(2)})S(h_{(2)})h_{(3)}\cdot b)\# S(h_{(1)}k_{(1)}) \\
& = & (S(h_{(2)}k_{(3)})\cdot  a)(S(k_{(2)})\cdot b)\# S(h_{(1)}k_{(1)}) ,
\eqnast
on the other hand
\beqnast
\mathcal{S} (b\# k) \mathcal{S} (a\# h) & = & ((S(k_{(2)})\cdot b)\# S(k_{(1)}))
((S(h_{(2)})\cdot a)\# S(h_{(1)}))\\
& = & (S(k_{(3)})\cdot b)(S(k_{(2)})\cdot (S(h_{(2)})\cdot a))\# S(k_{(1)})S(h_{(1)})\\
& = & (S(k_{(3)})\cdot b)(S(k_{(2)})S(h_{(2)})\cdot a)\# S(h_{(1)}k_{(1)})\\
& = & (S(h_{(2)}k_{(3)})\cdot  a)(S(k_{(2)})\cdot b)\# S(h_{(1)}k_{(1)}).
\eqnast

For the Hopf algebroid axiom (i) consider $a\in A$. Since $s_l=t_l=s_r=t_r$ in our case, we only have to verify two identies:
\beqnast
s_l \circ \ue_l \circ t_r (a) & = & s_l (\ue_l (a\# 1_H))\\
 =  s_l (a(1_H \cdot 1_A)) 
 &=&  s_l (a) =t_r (a) ,
\eqnast
and
\beqnast 
s_r \circ \ue_r \circ t_l (a) & = & s_r(\ue_r (a\# 1_H)) \\
 =  s_r (S(1_H) \cdot a) 
& = & s_r (a) =t_l (a).
\eqnast

The Hopf algebroid axiom (ii) is straightforward.
\end{proof}

\begin{proof}[ Proof of \thref{Hoidpartialcoaction}]
The map $\TS$ is an anti algebra morphism. Indeed, take $a1^{[0]} \otimes h1^{[1]}$ and $b1^{[0]} \otimes k1^{[1]}$ in  $A\underline{\otimes} H$, then
\beqnast
\TS ((a1^{[0]} \otimes h1^{[1]} ) (b1^{[0]} \otimes k1^{[1]})) & = & \TS (ab1^{[0]} \otimes hk1^{[1]} ) \\
& = & a^{[0]}b^{[0]} 1^{[0]} \otimes a^{[1]} b^{[1]} S(k) S(h) 1^{[1]} \\
& = & \left( b^{[0]}1^{[0]} \otimes b^{[1]} S(k) 1^{[1]} \right) \left( a^{[0]}1^{[0']} \otimes a^{[1]} S(h) 1^{[1']} \right) \\
& = & \TS (b1^{[0]} \otimes k1^{[1]} )\TS (a1^{[0']} \otimes h1^{[1']} ) .
\eqnast

The item (i) of the definition of Hopf algebroid (see \seref{algstruc}) can be easily deduced, for example 
\beqnast
t_l \circ \ue_l \circ s_r  (a) & = & t_l (\ue_l (a^{[0]}\otimes a^{[1]})) 
 =  t_l (a^{[0]}\epsilon (a^{[1]}) 
 =  t_l (a) = s_r (a) ;
\eqnast
the other identities are completely analogous.

The item (ii) is consequence of the fact that the left and right coring structure coincide, and then the compatibility simply means the coassociativity.
\end{proof}

\begin{proof}[ Proof of \prref{Cring}]
The multiplication $\mu$ a morphism of $C$ bicomodules: For the left coaction, we have
\beqnast
& \,  & \lambda \circ \mu \left( \sum_i \underline{ h^i \otimes c^i }\otimes 
\underline{ k^i \otimes d^i } \right) \\ 
& = &  \lambda \left( \sum_i \underline{\epsilon (h^i_{(1)} \cdot c^i ) \epsilon (k^i_{(1)} \cdot d^i_{(1)} ) h^i_{(2)}k^i_{(2)} \otimes d^i_{(2)}} \right) \\
& = & \sum_i \epsilon (k^i_{(1)} \cdot d^i_{(1)} )(h^i_{(2)} k^i_{(2)}\cdot d^i_{(2)})\otimes \underline{\epsilon (h^i_{(1)} \cdot c^i )  h^i_{(3)}k^i_{(3)} \otimes d^i_{(3)}} \\
& = & \sum_i (h^i_{(2)} \cdot (k^i_{(1)}\cdot d^i_{(1)}))\otimes 
\underline{\epsilon (h^i_{(1)} \cdot c^i )  h^i_{(3)}k^i_{(2)} \otimes d^i_{(2)}} \\
& = & \sum_i \epsilon (h^i_{(1)} \cdot c^i )(h^i_{(2)} \cdot (k^i_{(1)}\cdot d^i_{(1)}))\otimes \underline{ \epsilon (k^i_{(2)} \cdot d^i_{(2)}) h^i_{(3)}k^i_{(3)} \otimes d^i_{(3)}} \\
& = & \sum_i \epsilon (h^i_{(1)} \cdot c^i_{(1)} )(h^i_{(2)} \cdot c^i_{(2)})\otimes \underline{ \epsilon (k^i_{(1)} \cdot d^i_{(1)}) h^i_{(3)}k^i_{(2)} 
\otimes d^i_{(2)}} \\
& = & \sum_i (h^i_{(1)} \cdot c^i_{(1)})\otimes \underline { \epsilon (h^i_{(2)} \cdot c^i_{(2)} )\epsilon (k^i_{(1)} \cdot d^i_{(1)}) h^i_{(3)}k^i_{(2)} \otimes d^i_{(2)}} \\
& = & \sum_i h^i_{(1)} \cdot c^i_{(1)} \otimes \mu \left( \underline{ h^i_{(2)} \otimes c^i_{(2)} }\otimes \underline{ k^i \otimes d^i } \right) \\
& = & (I\otimes \mu) \circ (\lambda \otimes I) \left( \sum_i \underline{ h^i \otimes c^i }\otimes \underline{ k^i \otimes d^i } \right) .
\eqnast
And for the right coaction,
\beqnast
& \,  & \rho \circ \mu \left( \sum_i \underline{ h^i \otimes c^i }\otimes \underline{ k^i \otimes d^i } \right) \\ 
& = &  \rho \left( \sum_i \underline{\epsilon (h^i_{(1)} \cdot c^i ) \epsilon (k^i_{(1)} \cdot d^i_{(1)} ) h^i_{(2)}k^i_{(2)} \otimes d^i_{(2)}} \right) \\
& = &   \sum_i \underline{\epsilon (h^i_{(1)} \cdot c^i ) \epsilon (k^i_{(1)} \cdot d^i_{(1)} ) h^i_{(2)}k^i_{(2)} \otimes d^i_{(2)}} \otimes d^i_{(3)} \\
& = &   \sum_i \mu \left( \underline{h^i \otimes c^i }\otimes \underline{ k^i \otimes d^i_{(1)}} \right) \otimes d^i_{(3)} \\
& = & (\mu \otimes I)\circ (I\otimes \rho ) \left( \sum_i \underline{ h^i \otimes c^i }\otimes \underline{ k^i \otimes d^i } \right) .
\eqnast

We can prove also that this multiplication is associative. Indeed, on one hand we have,
\beqnast
& \, & \mu \circ (\mu \otimes I ) \left( \sum_i \underline{ h^i \otimes c^i }\otimes \underline{ k^i \otimes d^i } \otimes \underline{ l^i \otimes e^i }\right) \\ 
& = & \mu \left( \sum_i \underline{\epsilon (h^i_{(1)} \cdot c^i ) \epsilon (k^i_{(1)} \cdot d^i_{(1)} ) h^i_{(2)}k^i_{(2)} \otimes d^i_{(2)}} \otimes \underline{ l^i \otimes e^i } \right) \\
& = & \sum_i \underline{ \epsilon (h^i_{(1)} \cdot c^i ) \epsilon (k^i_{(1)} \cdot d^i_{(1)} ) \epsilon (h^i_{(2)}k^i_{(2)} \cdot d^i_{(2)}) \epsilon (l^i_{(1)} \cdot e^i_{(1)} ) h^i_{(3)}k^i_{(3)}l^i_{(2)} \otimes e^i_{(2)} } \\
& = & \sum_i \underline{ \epsilon (h^i_{(1)} \cdot c^i ) \epsilon (h^i_{(2)}\cdot (k^i_{(1)} \cdot d^i)) \epsilon (l^i_{(1)} \cdot e^i_{(1)} ) h^i_{(3)}k^i_{(2)}l^i_{(2)} \otimes e^i_{(2)} } \\
& = & \sum_i \underline{ \epsilon (h^i_{(1)} \cdot c^i ) 
\epsilon (h^i_{(2)}\cdot (k^i_{(1)} \cdot d^i_{(1)})) 
\epsilon (k^i_{(2)}\cdot d^i_{(2)})\epsilon (l^i_{(1)} \cdot e^i_{(1)} ) h^i_{(3)}k^i_{(2)}l^i_{(2)} \otimes e^i_{(2)} } \\
& = & \sum_i \underline{ \epsilon (h^i_{(1)} \cdot c^i_{(1)} ) 
\epsilon (h^i_{(2)}\cdot c^i_{(2)}) 
\epsilon (k^i_{(1)}\cdot d^i )\epsilon (l^i_{(1)} \cdot e^i_{(1)} ) h^i_{(3)}k^i_{(2)}l^i_{(2)} \otimes e^i_{(2)} } \\
& = & \sum_i \underline{ \epsilon (h^i_{(1)} \cdot c^i ) 
\epsilon (k^i_{(1)}\cdot d^i )\epsilon (l^i_{(1)} \cdot e^i_{(1)} ) h^i_{(2)}k^i_{(2)}l^i_{(2)} \otimes e^i_{(2)} } .
\eqnast

On the other hand,
\beqnast
& \, & \mu \circ (I\otimes \mu ) \left( \sum_i \underline{ h^i \otimes c^i }\otimes \underline{ k^i \otimes d^i } \otimes \underline{ l^i \otimes e^i } \right) \\ 
& = & \mu \left( \sum_i \underline{ h^i \otimes c^i }\otimes 
\underline{ \epsilon (k^i_{(1)} \cdot d^i )\epsilon (l^i_{(1)} \cdot e^i_{(1)}) k^i_{(2)}l^i_{(2)}\otimes e^i_{(2)} } \right)\\
& = &  \sum_i \underline{ \epsilon (h^i_{(1)} \cdot c^i ) 
\epsilon (k^i_{(1)} \cdot d^i ) \epsilon (l^i_{(1)} \cdot e^i_{(1)}) 
\epsilon (k^i_{(2)}l^i_{(2)}\cdot e^i_{(2)}) h^i_{(2)}k^i_{(3)}l^i_{(3)}\otimes e^i_{(3)}} \\
& = &  \sum_i \underline{ \epsilon (h^i_{(1)} \cdot c^i ) 
\epsilon (k^i_{(1)} \cdot d^i )  
\epsilon (k^i_{(2)} \cdot (l^i_{(1)}\cdot e^i_{(1)})) h^i_{(2)}k^i_{(3)}l^i_{(2)}\otimes e^i_{(2)}} \\
& = &  \sum_i \underline{ \epsilon (h^i_{(1)} \cdot c^i ) 
\epsilon (k^i_{(1)} \cdot d^i )  
\epsilon (k^i_{(2)} \cdot (l^i_{(1)}\cdot e^i_{(1)})) \epsilon (l^i_{(2)} \cdot e^i_{(2)}) h^i_{(2)}k^i_{(3)}l^i_{(3)}\otimes e^i_{(3)}} \\
& = &  \sum_i \underline{ \epsilon (h^i_{(1)} \cdot c^i ) 
\epsilon (k^i_{(1)} \cdot d^i_{(1)} )  
\epsilon (k^i_{(2)} \cdot d^i_{(2)}) \epsilon (l^i_{(1)} \cdot e^i_{(1)}) h^i_{(2)}k^i_{(3)}l^i_{(2)}\otimes e^i_{(2)}} \\
& = &  \sum_i \underline{ \epsilon (h^i_{(1)} \cdot c^i ) 
\epsilon (k^i_{(1)} \cdot d^i )   \epsilon (l^i_{(1)} \cdot e^i_{(1)}) h^i_{(2)}k^i_{(2)}l^i_{(2)}\otimes e^i_{(2)}} .
\eqnast

Let us check the unit axioms for the $C$-ring, i.e.
\begin{eqnarray*}
\mu \circ (\eta \otimes I)\circ \lambda = I= \mu \circ (I\otimes \eta ) \circ \rho =I
\end{eqnarray*}
For the first identity, take $\underline{h\otimes c}\in \underline{H\otimes C}$,
\beqnast
\mu \circ (\eta \otimes I)\circ \lambda (\underline{h\otimes c}) & = & 
\mu (\underline{1_H \otimes h_{(1)}\cdot c_{(1)}} \otimes \underline{ h_{(2)} \otimes c_{(2)} }) \\
& = & \underline{ \epsilon ( 1_H \cdot (h_{(1)}\cdot c_{(1)})) 
\epsilon (h_{(2)} \cdot c_{(2)} ) h_{(3)} \otimes c_{(3)}} \\
& = & \underline{ \epsilon (h_{(1)}\cdot c_{(1)}) h_{(2)} \otimes c_{(2)}} 
 =  \underline{h\otimes c} .
\eqnast
and for the second identity,
\beqnast
\mu \circ (I \otimes \eta )\circ \rho (\underline{h\otimes c}) & = & 
\mu (\underline{h\otimes c_{(1)}}\otimes \underline{1_H \otimes c_{(2)}}) \\
& = & \underline{ \epsilon (h_{(1)}\cdot c_{(1)})\epsilon (1_H \cdot c_{(2)}) h_{(2)} \otimes c_{(3)} } \\
& = & \underline{ \epsilon (h_{(1)}\cdot c_{(1)})\epsilon (c_{(2)}) h_{(2)} \otimes c_{(3)} } \\
& = & \underline{ \epsilon (h_{(1)}\cdot c_{(1)}) h_{(2)} \otimes c_{(2)} } 
 =  \underline{h\otimes c} .
\eqnast
Therefore, $\underline{H\otimes C}$ is a $C$-ring.
\end{proof}

\end{document}